\numberwithin{equation}{section}
\numberwithin{figure}{section}
\newcommand{\lyxaddress}[1]{
\par {\raggedright #1
\vspace{1.4em}
\noindent\par}
}
\theoremstyle{plain}
\newtheorem{thm}{\protect\theoremname}[section]
  \theoremstyle{plain}
  \newtheorem{fact}[thm]{\protect\factname}
  \theoremstyle{remark}
  \newtheorem{rem}[thm]{\protect\remarkname}
  \theoremstyle{plain}
  \newtheorem{prop}[thm]{\protect\propositionname}
  \theoremstyle{plain}
  \newtheorem{lem}[thm]{\protect\lemmaname}
  \theoremstyle{remark}
  \newtheorem*{rem*}{\protect\remarkname}
  \theoremstyle{plain}
  \newtheorem{conjecture}[thm]{\protect\conjecturename}
  \theoremstyle{definition}
  \newtheorem{defn}[thm]{\protect\definitionname}
\date{}
\let\originalleft\left
\let\originalright\right
\renewcommand{\left}{\mathopen{}\mathclose\bgroup\originalleft}
\renewcommand{\right}{\aftergroup\egroup\originalright}
  \providecommand{\conjecturename}{Conjecture}
  \providecommand{\definitionname}{Definition}
  \providecommand{\factname}{Fact}
  \providecommand{\lemmaname}{Lemma}
  \providecommand{\propositionname}{Proposition}
  \providecommand{\remarkname}{Remark}
\providecommand{\theoremname}{Theorem}
\begin{document}
\global\long\def\R{\mathbb{R}}

\global\long\def\C{\mathbb{C}}

\global\long\def\bbN{\mathbb{N}}

\global\long\def\Z{\mathbb{Z}}

\global\long\def\N{\mathbb{N}}

\global\long\def\Q{\mathbb{Q}}

\global\long\def\T{\mathbb{T}}

\global\long\def\F{\mathbb{F}}

\global\long\def\Sph{\mathbb{S}}

\global\long\def\sub{\subseteq}

\global\long\def\cvx{\mbox{Cvx}\left(\R^{n}\right)}

\global\long\def\cvxo{\text{Cvx}_{0}\left(\R^{n}\right)}

\global\long\def\lcg{\text{LC}_{g}\left(\R^{n}\right)}

\global\long\def\lc{\text{LC}\left(\R^{n}\right)}

\global\long\def\dis{\text{Dis }\left(\R^{n}\right)}

\global\long\def\one{\mathbbm1}

\global\long\def\infc#1#2{#1\,+_{\text{cvx}}\,#2}

\global\long\def\supc#1#2{#1\,+_{\text{lc}}\,#2}

\global\long\def\vol{{\rm Vol}}

\global\long\def\EE{\mathbb{E}}

\global\long\def\epi#1{\text{epi}\left\{  #1\right\}  }

\global\long\def\sp{{\rm sp}}

\global\long\def\K{\mathcal{K}}

\global\long\def\A{\mathcal{A}}

\global\long\def\L{\mathcal{L}}

\global\long\def\P{\mathcal{P}}

\global\long\def\W{\mathcal{W}}

\global\long\def\iprod#1#2{\langle#1,\,#2\rangle}

\global\long\def\cvrs{{\rm Cvrs}}

\global\long\def\cvrsb{\overline{{\rm Cvrs}}}

\global\long\def\uball{B_{2}^{n}}

\global\long\def\conv{{\rm conv}}

\global\long\def\Gauss{\gamma_{n}^{\sigma}}

\global\long\def\EXP#1{\EE_{{\textstyle #1}}}

\global\long\def\eps{\varepsilon}

\global\long\def\PP{\mathbb{P}}

\global\long\def\IndUball{\one_{B_{2}^{n}}}

\global\long\def\J{{\cal J}}

\global\long\def\N{{\cal N}}

\global\long\def\inte#1{{\rm int}\left(#1\right)}

\global\long\def\co{C_{0}\left(\R^{n}\right)}

\global\long\def\mm{\mathcal{M}}

\global\long\def\MM{\widetilde{{\cal M}}_{n}}

\global\long\def\CC{C_{b}}

\global\long\def\cl{C_{l}}

\global\long\def\co{C_{0}\left(\R^{n}\right)}

\global\long\def\cop{C_{0}^{+}\left(\R^{n}\right)}

\global\long\def\fc{{\cal C}\left(\R^{n}\right)}

\global\long\def\norm#1{\left\Vert #1\right\Vert }

\global\long\def\supp#1{{\rm supp}\left(#1\right)}

\global\long\def\gr{g_{-}}

\global\long\def\refl#1{#1_{-}}

\global\long\def\bary#1{{\rm bary}\left(#1\right)}

\title{Functional Covering Numbers}

\author{Shiri Artstein-Avidan and Boaz A. Slomka}
\maketitle
\begin{abstract}
We define covering and separation numbers for functions. We investigate
their properties, and show that for some classes of functions there
is exact equality of separation and covering. We provide analogues
for various geometric inequalities on covering numbers, such as volume
bounds, bounds connected with Hadwiger's conjecture, and inequalities
about $M$-positions for geometric log-concave functions. In particular, we obtain strong versions of $M$-positions for geometric log-concave
functions. 
\end{abstract}

\lyxaddress{\textbf{Keywords}: Covering numbers, functionalization of geometry,
log-concave functions, duality, volume bounds, $M$-position.\\
\textbf{2010 Mathematics Subject Classification}: 52C17, 52A23, 46A20.}

\section{Introduction}

\subsection{Background and Motivation}

Covering numbers can be found in various fields of mathematics, including
combinatorics, probability, analysis and geometry. They participate
in the solution of many problems in a natural manner, see the book
\cite[Chapter 4]{AGM15} and references therein. Loosely speaking,
their use can be seen as follows: When working with a set, or a body,
and considering some monotone property of it (such as volume, say)
one can sometimes replace the original body with the union of simpler
bodies (say balls), to obtain bounds on the needed quantity. To this
end, one computes the least number of balls of a certain radius needed
to cover the original set, this is called a covering number, see \eqref{eq:Cov_Def}
below for the formal definition.

The fact that geometric notions and inequalities have analytic counterparts
is considered folklore in the theory of asymptotic convex geometry.
This fashion of ``functionalization\textquotedbl{} started in the
90s and has proven to be very fruitful, see \cite{Milman08}. Since
covering numbers play a considerable part in the theory of convex
geometry, their extension to the realm of log-concave functions is
an essential building block for this theory.

The first step towards this end was given in \cite{ArtsteinRaz2011}
and in \cite{ArtSlom14-FracCov}, where the weighted notions of covering
and separation numbers of convex bodies were introduced and the relations
between these and the classical notions of covering and separation
were investigated.

In this note we define functional covering and separation numbers,
and discuss in detail their basic properties. We then show duality
between the two notions, which is a nontrivial example of infinite
dimensional linear programming duality. In the second part of the
note we discuss some more advanced results on functional covering
numbers. These include volume bounds of various types, geometric duality
results in the form of König and Milman, and results regarding the
$M$-position of functions. Sudakov-type estimates for functional
covering numbers will appear in \cite{Slo17}. We consider the notions
introduced here to be both novel and natural, and believe that they
will soon become an innate part of the theory of Asymptotic Geometric
Analysis. 

\subsection{Definitions}

\subsubsection{Functional covering numbers}

Given three measurable functions $f,g,h:\R^{n}\to[0,\infty)$ we define
the $h$-covering number of $f$ by $g$ 
\[
N(f,g,h)=N^{h}(f,g)=\inf\{\int hd\mu:\mu*g\ge f\}.
\]
The infimum is taken over all non-negative Borel measures $\mu$ on
$\R^{n}$. Each $\mu$ which satisfies $\mu*g\ge f$, that is, 
\[
\int g(x-t)d\mu(t)\ge f(x)\qquad{\rm for~all~}x\in\R^{n},
\]
is called a ``covering measure\textquotedbl{} of $f$ by $g$. In
the case of $h\equiv1$ we thus infimize the total mass of a covering
measure of $f$ by $g$. For general $h$ we infimize a different
quantity, namely the integral of $h$ with respect to $\mu$. It is
useful to note that the choice of $h$ does not influence the set
of covering measures. We shall call $N^{1}(f,g)$ the functional covering
number of $f$ by $g$, and $N^{h}(f,g)$ the $h$-covering number
of $f$ by $g$.

One may define variants of this notion when the set of measures over
which one takes the infimum is chosen differently. For example, if
one allows only atomic measures of the form $\sum_{i}\delta_{x_{i}}$,
then for functions which are indicators of convex sets, and $h\equiv1$
one recovers the usual definition of covering number 
\begin{equation}
N(K,T)=\min\left\{ N\,:\,N\in\bbN,\,\,\exists x_{1},\dots x_{N}\in\R^{n};\,\,K\sub\bigcup_{i=1}^{N}\left(x_{i}+T\right)\right\} .\label{eq:Cov_Def}
\end{equation}
If one lets $h$ equal to $1$ on $K$ and $+\infty$ outside of $K$,
one recovers $\overline{N}(K,T)$, that is, the classical covering
number variant when the cover centers are forced to lie inside $K$.
Another natural set of measures to discuss is that of discrete measures,
namely weighted sums $\sum_{i}w_{i}\delta_{x_{i}}$ where $w_{i}\ge0$.
These were the ones considered in \cite{ArtsteinRaz2011} and discussed
in \cite{ArtSlom14-FracCov}, again for weight function $h=1$.

\subsubsection{Functional separation numbers}

Similarly, we extend the notion of separation numbers, which is a
dual notion to that of covering, to the functional setting. Given
three measurable functions $f,g,h:\R^{n}\to[0,\infty)$ we define
the $h$-separation number of $f$ by $g$ 
\[
M(f,g,h)=M^{h}(f,g)=\sup\{\int fd\rho:\rho*g\le h\}.
\]
The supremum is taken over all non-negative Borel measures $\rho$
on $\R^{n}$. Each $\rho$ which satisfies $\rho*g\le h$, that is,
\[
\int g(x-t)d\rho(t)\le h(x)\qquad{\rm for~all~}x\in\R^{n},
\]
is called a ``separation measure\textquotedbl{} of $g$ with respect
to $h$. An interesting case here is when $f=1_{K}$ is the indicator
of some (say, convex) set and then one supremizes the total weight
of a separation measure (of $g$ with respect to $h)$ which is supported
on $K$. When $h=1_{K}$ then no mass is allowed outside of $K$ and
this corresponds to the notion of ``packing''. We shall call $M^{1}(f,g)$
the functional separation number of $f$ by $g$, and $M^{h}(f,g)$
the $h$-separation number of $f$ by $g$.

Again one may define variants of this notion when the set of measures
over which one takes the supremum is chosen differently. For example,
if one allows only atomic measures of the form $\sum_{i}\delta_{x_{i}}$,
then for functions which are indicators of convex sets, and $h\equiv1$
one recovers the usual definition of separation number 
\[
M(K,T)=\max\left\{ M\,:\,N\in\bbN,\,\,\exists x_{1},\dots x_{M}\in K\,;\,\,\left(x_{i}+T\right)\cap\left(x_{j}+T\right)=\emptyset\,\,\forall i\neq j\right\} .
\]

\subsection{Main Results \label{sec:MainResults}}

\subsubsection{Duality between covering and separation}

As in the case of convex bodies and classical theory, covering and
separation numbers are intimately related. In fact, the relation is
more exact in the functional setting, and our first main result is
an equality between the two, under certain conditions on the functions
involved. Define $u_{-}(x)=u(-x)$ for a function $u:\R^{n}\to\R$.

The inequality $M^{h}\left(f,\gr\right)\le N^{h}\left(f,g\right)$
is particularly simple, and is valid for any three measurable functions
$f,g,h:\R^{n}\to[0,\infty)$, see Proposition \ref{prop:WeakDuality}
below. In the language of linear programming, this is called ``weak
duality\textquotedbl{}. When there is equality in this inequality,
we say there is ``strong duality\textquotedbl{}, adopting the language
of linear programming. Our first main result is a strong duality between
functional covering and separation numbers under certain conditions
on the functions. Some of these conditions can later be removed. Removing
these conditions is usually quite technical. Our first result is concerns
the space $\co$ of continuous real valued functions on $\R^{n}$
which vanish at infinity.
\begin{thm}
\label{thm:StrongDuality-Co-1}Let $0\neq f,g,h\in\co$. Suppose that
$f$ is compactly supported. Then 
\[
M^{h}\left(f,\gr\right)=N^{h}\left(f,g\right).
\]
Moreover, there exists a $g_{-}$-separated measure $\rho$ such that
$\int fd\rho=M^{h}\left(f,g_{-}\right).$ 
\end{thm}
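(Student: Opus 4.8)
The plan is to prove this as an instance of infinite-dimensional linear programming (LP) duality, viewing the covering problem and the separation problem as a dual pair of linear programs over cones of measures and functions. The primal problem $N^h(f,g) = \inf\{\int h\,d\mu : \mu * g \ge f\}$ is a minimization over the cone of non-negative Borel measures, and the dual problem $M^h(f,g_-) = \sup\{\int f\,d\rho : \rho * g_- \le h\}$ — here one must first record the elementary computation that $\mu * g \ge f$ paired against $\rho$ yields, after Fubini, the constraint $\rho * g_- \le h$ tested against $\mu$, which is exactly the content of the weak duality Proposition \ref{prop:WeakDuality} already available to us. So the only thing to prove is the reverse inequality $N^h(f,g) \le M^h(f,g_-)$ together with attainment on the separation side.

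First I would set up the right functional-analytic framework: since $f$ is compactly supported, say $\supp f \subseteq RB_2^n$, I claim any covering measure $\mu$ that is a near-minimizer can be taken with support in a fixed compact set $L$ depending on $R$, $g$, and a lower bound for $g$ near the origin; indeed mass placed far from $\supp f$ contributes little to $\mu * g$ on $\supp f$ (because $g \in \co$ decays) while it may cost $\int h\,d\mu$, so it can be discarded or relocated without increasing the objective. Having localized to measures on the compact set $L$, the relevant spaces become $C(L)$ and its dual $\mathcal M(L)$, which is a setting where a Hahn--Banach / Sion minimax argument applies cleanly. Concretely, I would consider the bilinear Lagrangian
\[
\Phi(\mu,\rho) = \int h\,d\mu + \int f\,d\rho - \iint g(x-t)\,d\mu(t)\,d\rho(x),
\]
over $\mu$ ranging in the non-negative cone of $\mathcal M(L)$ and $\rho$ in the non-negative cone of $\mathcal M(R B_2^n)$, and verify the hypotheses of an appropriate minimax theorem: linearity (hence convexity/concavity) in each variable, weak-$*$ compactness of a suitably truncated feasible set on one side, and continuity. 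One then identifies $\inf_\mu \sup_\rho \Phi = N^h(f,g)$ and $\sup_\rho \inf_\mu \Phi = M^h(f,g_-)$, so the minimax equality gives the result.

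The attainment of the supremum by an actual $g_-$-separated measure $\rho$ I would get from weak-$*$ compactness: the feasible set $\{\rho \ge 0 : \rho * g_- \le h\}$, intersected with measures supported in $RB_2^n$ (which is where the objective $\int f\,d\rho$ lives since $\supp f \subseteq RB_2^n$), is bounded in total variation — because $g_-$ is bounded below by a positive constant on some ball, forcing $\rho$ to have bounded mass — and weak-$*$ closed, hence weak-$*$ compact by Banach--Alaoglu; since $\rho \mapsto \int f\,d\rho$ is weak-$*$ continuous ($f \in C_0$), the supremum is attained.

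The main obstacle I expect is the compactness/tightness step that justifies restricting to measures on a fixed compact set — this is where the hypotheses "$f$ compactly supported" and "$g, h \in \co$" (in particular $g \not\equiv 0$, so $g$ has a positive lower bound on some ball, and $h$ vanishes at infinity, so cannot ``penalize away'' relocation too cheaply, while also $h$ being merely non-negative and vanishing at infinity means one must be slightly careful that a near-optimal $\mu$ does not want to escape to infinity where $h$ is small) all get used. Getting the truncation to preserve the constraint $\mu * g \ge f$ on $\supp f$ exactly, rather than approximately, requires a small argument: one replaces a general covering measure by one supported on $L = \supp f + D$ for $D$ a large ball, using that outside $L$ the contribution of $\mu$ to $(\mu*g)|_{\supp f}$ is negligible, then rescales slightly; controlling the resulting change in $\int h\,d\mu$ uses $h \in C_0$. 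Once this localization is in place, the minimax theorem and the Banach--Alaoglu attainment argument are essentially routine, so I would present the localization lemma carefully and then invoke a standard minimax result (e.g. Sion's theorem) for the equality.
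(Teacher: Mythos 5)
Your argument hinges on the claim that a near-minimizing covering measure can be taken with support in a \emph{fixed} compact set $L$ depending only on $\supp f$ and $g$, and this claim is false in general; it is precisely where the difficulty of the theorem lives. Since $h\in\co$ vanishes at infinity, mass placed far away is nearly free, and if $g$ decays more slowly than $h$, such far-away mass can do a substantial part of the covering of $f$ on its compact support. Concretely, take $f$ a bump supported in $\uball$, $g(x)=e^{-|x|}$, $h(x)=e^{-|x|^{2}}$: the measure $\mu_{D}=e^{D+1}\delta_{x_{D}}$ with $|x_{D}|=D$ satisfies $\mu_{D}*g\ge1\ge f$ on $\uball$ while $\int h\,d\mu_{D}=e^{D+1-D^{2}}\to0$, so $N^{h}(f,g)=0$, the infimum is not attained, and every near-minimizer escapes to infinity; the infimum over measures supported in any fixed compact $L$ is strictly positive. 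Thus far-away mass can be neither discarded (it breaks $\mu*g\ge f$) nor relocated (the $h$-cost can explode), and your proposed repair -- truncate to $L=\supp f+D$ and ``rescale slightly'' -- also fails near the boundary of $\supp f$, where $f$ is small: $(1+\eps)(f-\delta)\ge f$ forces $f\ge\delta(1+\eps)/\eps$ there. A secondary structural problem: once $\mu$ is confined to $L$, your Lagrangian only enforces $\rho*\gr\le h$ \emph{on} $L$, so even a correct minimax identity for the localized problem equates the localized values $N_{L}$ and $M_{L}$, each of which can exceed $N^{h}(f,g)$ and $M^{h}(f,\gr)$ respectively; closing the gap would require a further limit $L\uparrow\R^{n}$, where the same (flawed) approximation of covering measures by compactly supported ones resurfaces.

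The paper avoids this entirely by never localizing on the covering side. It proves a general LP duality theorem (Theorem \ref{thm:SD_cov_gen}, a Hahn--Banach separation argument in $\co\oplus\R$ in the spirit of Barvinok) whose only substantive hypothesis is closedness of the image cone, and applies it in its separation form (Theorem \ref{thm:SD_sep_gen}). There, compact support of $f$ is exactly what is needed, because localization on the \emph{separation} side is harmless: restricting a measure $\rho$ with $\rho*g\le h$ to $\supp f$ preserves feasibility and the objective $\int f\,d\rho$, and yields a uniform total-variation bound (Lemma \ref{lem:Sep_Bounded_TV}), after which Banach--Alaoglu gives closedness of the cone $B(K)$. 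Note also that the attainment asserted in the theorem is on the separation side only -- consistent with the example above, where the covering infimum is not attained -- and indeed your Banach--Alaoglu argument for attainment of $M^{h}(f,\gr)$ is sound and essentially reproduces the paper's lemma; but the equality $N^{h}(f,g)=M^{h}(f,\gr)$ cannot be reached along your route without a genuinely different treatment of the covering side.
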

Theorem \ref{thm:StrongDuality-Co-1} follows from the fact that the
numbers $N^{h}(f,g)$ and $M^{h}(f,g_{-})$ can be interpreted as
the outcomes of two dual problems in the sense of linear programming,
and is a direct consequence of \cite[Theorem 7.2]{Barvinok2002},
a zero gap result for linear programming duality in a very general
setting of ordered topological vector spaces.

\noindent The case where $h\equiv1$ (which is not in $C_{0}$) is
of particular importance, and we establish a strong duality relation
in this case as well:
\begin{thm}
\noindent \label{thm:StrongDuality}Let $0\neq f,g\in\co$ and assume
that there exists a finite regular Borel measure $\mu$ which covers
$f$ by $g$. Then 
\[
M^{1}(f,g_{-})=N^{1}(f,g).
\]
Moreover, there exists a covering measure $\mu_{0}$ of $f$ by $g$,
such that $\mu_{0}(\R^{n})=N^{1}\left(f,g\right)$. 
\end{thm}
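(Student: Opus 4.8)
The plan is to deduce Theorem \ref{thm:StrongDuality} from Theorem \ref{thm:StrongDuality-Co-1} by an approximation/truncation argument that converts the problematic weight $h\equiv 1\notin\co$ into an honest $\co$ weight without changing either optimal value. The obstacle is precisely that $h\equiv 1$ does not vanish at infinity, so Barvinok's zero-gap theorem does not apply directly; the whole point is to show that the mass $\mu(\R^n)$ of a covering measure can be controlled on a fixed compact set, so that replacing $h$ by a compactly supported bump equal to $1$ on that set is harmless.

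\medskip

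First I would reduce to the compactly supported case on the covering side. Fix a finite regular Borel covering measure $\mu$ of $f$ by $g$; we may assume $\mu(\R^n)\le N^1(f,g)+\eps$. Since $g\in\co$ and $g\neq 0$, there is $\delta>0$ and a ball $B$ with $g\ge\delta$ on $B$; because $f$ is... (wait, $f$ need not be compactly supported here) — instead, use that $f\in\co$ so $f(x)\to0$, hence there is a compact set $K$ with $f< \delta\cdot(\text{small})$ outside $K$. The key claim is: \emph{there is a compact set $K_0$ (depending only on $f,g$) such that any optimal-up-to-$\eps$ covering measure may be replaced by one supported in $K_0$ with essentially the same mass.} Indeed, mass placed far from $\supp$ of the ``relevant'' part of $f$ contributes negligibly to covering $f$ (since $g\to 0$), so it can be deleted, decreasing the mass; the remaining mass must lie within bounded distance of a compact set carrying most of $f$, because to cover $f$ near a point $x$ one needs mass within the (bounded, by $g\in\co$) ``effective support radius'' of $g$ from $x$. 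This localizes both the measure and effectively truncates $f$ to a compactly supported $\tilde f\le f$ with $N^1(\tilde f,g)\ge N^1(f,g)-\eps$.

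\medskip

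Next, with everything localized in $K_0$, pick $h_0\in\co$ with $h_0\equiv 1$ on $K_0$, $0\le h_0\le 1$, compactly supported. Then for measures supported in $K_0$ we have $\int h_0\,d\mu=\mu(\R^n)$, so $N^{h_0}(\tilde f,g)=N^1(\tilde f,g)$ (up to the $\eps$ from localization), and similarly on the separation side any $g_-$-separated measure $\rho$ with $\rho*g\le 1$ satisfies $\rho*g\le h_0$ only after we also localize $\rho$; but a separating measure witnessing $M^1$ can likewise be taken supported in a compact set (mass of $\rho$ far out contributes to $\int \tilde f\,d\rho$ only near $\supp\tilde f$, and the constraint $\rho*g\le 1$ near a point of $\supp\tilde f$ bounds the nearby $\rho$-mass), so that $M^{h_0}(\tilde f,g_-)\ge M^1(\tilde f,g_-)-\eps$. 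Applying Theorem \ref{thm:StrongDuality-Co-1} to the triple $\tilde f,g,h_0$ (noting $0\neq\tilde f$ is compactly supported, $g,h_0\in\co$) gives $M^{h_0}(\tilde f,g_-)=N^{h_0}(\tilde f,g)$, and the moreover-part supplies an optimal separating measure $\rho_0$. Chaining the (in)equalities
\[
N^1(f,g)-\eps\le N^{h_0}(\tilde f,g)=M^{h_0}(\tilde f,g_-)\le M^1(f,g_-)\le N^1(f,g),
\]
where the last step is weak duality (Proposition \ref{prop:WeakDuality}), and letting $\eps\to0$, yields $M^1(f,g_-)=N^1(f,g)$.

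\medskip

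Finally, for the ``moreover'' part — existence of an \emph{exact} optimal covering measure $\mu_0$ with $\mu_0(\R^n)=N^1(f,g)$ — I would run a compactness argument: take covering measures $\mu_k$ with $\mu_k(\R^n)\to N^1(f,g)$, all supported (by the localization above) in a fixed compact $K_0$ with uniformly bounded mass; by weak-$*$ compactness (Banach--Alaoglu / Prokhorov on the compact set $K_0$) extract a subsequential weak-$*$ limit $\mu_0$, check that the constraint $\mu*g\ge f$ passes to the limit (it does, since $g\in\co$ and integration against the fixed continuous function $g(x-\cdot)$ is weak-$*$ continuous), and that $\mu_0(\R^n)\le\liminf\mu_k(\R^n)=N^1(f,g)$, forcing equality. \textbf{The hard part} is the localization claim in the first paragraph — making precise, with quantitative estimates using only $f,g\in\co$ and $f\neq0$, that no mass needs to live outside a fixed compact set and that $f$ may be truncated without lowering $N^1$ — since everything else is a routine weak-$*$ limit plus an application of the already-established $\co$-duality.
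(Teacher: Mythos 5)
Your reduction to Theorem \ref{thm:StrongDuality-Co-1} breaks at the weight-replacement step, and the localization claim it rests on is false. First, covering measures of a general $f\in\co$ need not admit any compactly supported replacement: take $g(x)=e^{-|x|^{2}}$ and $f(x)=e^{-|x|}$ (both in $\co$, and $N^{1}(f,g)<\infty$ since the measure with density $Ce^{-|t|}$ covers $f$ by $g$ for suitable $C$); if $\mu$ is supported in a ball of radius $R$ then $(\mu*g)(x)\le\mu(\R^{n})e^{-(|x|-R)^{2}}$, which for large $|x|$ drops below $f(x)$, so the pointwise constraint $\mu*g\ge f$ fails far out. Deleting far-away mass is not harmless, because the constraint is imposed at \emph{every} $x$, not only where $f$ is large. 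Second, and more fatally, even after truncating $f$ to a compactly supported $\tilde f$ (that step is fine and can be justified by a weak{*} argument), the claimed near-equality $N^{h_{0}}(\tilde f,g)\approx N^{1}(\tilde f,g)$ for a compactly supported bump $h_{0}$ is false: if $g$ is strictly positive (a Gaussian lies in $\co$), a single huge point mass placed outside $\supp{h_{0}}$ covers $\tilde f$ at cost $\int h_{0}\,d\mu=0$, so $N^{h_{0}}(\tilde f,g)=0$ while $N^{1}(\tilde f,g)\ge\int\tilde f/\int g>0$. Optimal measures for the weight $h_{0}$ migrate to where $h_{0}$ is cheap, so they are not supported in $K_{0}$; and since $h_{0}\le1$ forces $N^{h_{0}}\le N^{1}$, the inequality your chain actually needs, $N^{h_{0}}(\tilde f,g)\ge N^{1}(\tilde f,g)-\eps$, is exactly the direction that fails, and establishing any variant of it would require the same compactness/closedness analysis you are trying to bypass.

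The paper does not route Theorem \ref{thm:StrongDuality} through Theorem \ref{thm:StrongDuality-Co-1} at all: Theorem \ref{thm:SD_cov_gen} is stated for any \emph{bounded continuous} weight, so $h\equiv1$ is admissible directly, and the proof of Theorem \ref{thm:StrongDuality} consists of verifying that the cone image $A(K)$ is closed. That verification is essentially your ``moreover'' mechanism --- Banach--Alaoglu applied to $\mu_{k}$ with $\mu_{k}(\R^{n})\to\alpha$, and pointwise convergence of $\mu_{k}*g$ because $g(x-\cdot)\in\co$ --- plus one twist you would also need: the weak{*} limit may lose mass at infinity (so that $\mu(\R^{n})<\alpha$), and this is repaired not by confining measures to a fixed compact set but by rescaling $\mu$ to $c\mu$ and absorbing $(c-1)\mu*g$ into the slack function $\varphi$. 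Your final argument for the optimal covering measure $\mu_{0}$ is correct in spirit once you drop the unavailable assumption of support in a fixed $K_{0}$ (boundedness of the total masses suffices, together with lower semicontinuity of the total variation under weak{*} convergence), but the duality identity itself needs the closedness-of-$A(K)$ route rather than a swap of the weight function.
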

\noindent The proof of Theorem \ref{thm:StrongDuality} is based on
a variation of \cite[Theorem 7.2]{Barvinok2002}. For the convenience
of the reader, we state and prove a single linear programming duality
result from which both Theorems \ref{thm:StrongDuality-Co-1} and
\ref{thm:StrongDuality} follow. This result is given as Theorem \ref{thm:SD_cov_gen}
in Section \ref{sec:StrongDuality}. We also prove the following two
extensions of Theorems \ref{thm:StrongDuality-Co-1} and \ref{thm:StrongDuality},
obtained via limiting arguments.
\begin{thm}
\label{thm:StrongDuality-Co}Let $0\neq f,g,h\in\co$. Suppose that
$\int f,\int g<\infty$, and $N^{h}\left(f,g\right)<\infty$. Then
\[
M^{h}\left(f,\gr\right)=N^{h}\left(f,g\right).
\]
\end{thm}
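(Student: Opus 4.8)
The plan is to deduce Theorem~\ref{thm:StrongDuality-Co} from the compactly supported case, Theorem~\ref{thm:StrongDuality-Co-1}, by approximating $f$ monotonically from below. By weak duality (Proposition~\ref{prop:WeakDuality}), $M^{h}(f,\gr)\le N^{h}(f,g)$ always, so only the reverse inequality is at stake. Fix continuous cutoffs $\varphi_{k}:\R^{n}\to[0,1]$ with $\varphi_{k}\equiv1$ on the ball of radius $k$, $\varphi_{k}\equiv0$ outside the ball of radius $k+1$, and $\varphi_{k}\uparrow1$ pointwise, and put $f_{k}=f\varphi_{k}$. Then $f_{k}\in\co$ has compact support, $0\le f_{k}\uparrow f$ pointwise, $\int f_{k}\le\int f<\infty$, and $f_{k}\neq0$ for $k$ large; for such $k$, Theorem~\ref{thm:StrongDuality-Co-1} gives $M^{h}(f_{k},\gr)=N^{h}(f_{k},g)$. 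Two trivial monotonicities then apply: any covering measure of $f$ by $g$ covers every $f_{k}$, so $N^{h}(f_{k},g)\le N^{h}(f,g)$; and the family of $\gr$-separated measures is independent of the covered function while $\int f_{k}\,d\rho\le\int f\,d\rho$ for $\rho\ge0$, so $M^{h}(f_{k},\gr)\le M^{h}(f,\gr)$. Chaining these,
\[
N^{h}(f,g)\ \ge\ M^{h}(f,\gr)\ \ge\ \sup_{k}M^{h}(f_{k},\gr)\ =\ \sup_{k}N^{h}(f_{k},g),
\]
so the theorem follows once we show $\sup_{k}N^{h}(f_{k},g)\ge N^{h}(f,g)$; the reverse of this last inequality is the monotonicity just noted, so what must be proved is the continuity of $N^{h}(\cdot,g)$ along the increasing sequence $f_{k}\uparrow f$.

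To prove $\sup_{k}N^{h}(f_{k},g)\ge N^{h}(f,g)$, set $L:=\sup_{k}N^{h}(f_{k},g)$; if $L=\infty$ there is nothing to prove, since $L\le N^{h}(f,g)<\infty$ already, so assume $L<\infty$. Fix $\eps>0$ and, for each large $k$, choose a covering measure $\mu_{k}$ of $f_{k}$ by $g$ with $\int h\,d\mu_{k}\le N^{h}(f_{k},g)+\eps\le L+\eps$. Granting the compactness discussed below, one passes to a subsequence along which $\mu_{k}$ converges to a nonnegative Borel measure $\mu$ in a sense strong enough that, for each fixed $x$, testing against $g(x-\cdot)\in\co$ yields $(\mu*g)(x)=\lim_{k}(\mu_{k}*g)(x)\ge\lim_{k}f_{k}(x)=f(x)$, while lower semicontinuity of $\mu\mapsto\int h\,d\mu$ gives $\int h\,d\mu\le\liminf_{k}\int h\,d\mu_{k}\le L+\eps$. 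Hence $\mu$ covers $f$ by $g$ with $\int h\,d\mu\le L+\eps$, so $N^{h}(f,g)\le L+\eps$; letting $\eps\downarrow0$ gives $N^{h}(f,g)\le L$, which is what was needed.

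The entire difficulty lies in producing the limit measure $\mu$, and this is where the hypotheses $\int f<\infty$, $\int g<\infty$ and $N^{h}(f,g)<\infty$ come in. The bound $\int h\,d\mu_{k}\le L+\eps$ controls the measures $h\mu_{k}$ but not the masses $\mu_{k}(\R^{n})$, and two obstructions to genuine weak-$*$ convergence of the $\mu_{k}$ must be excluded. First, mass of $\mu_{k}$ can escape to infinity: the finite integrals of $f$ and $g$ are what allow one to show that a near-optimal $\mu_{k}$ may be replaced, with arbitrarily small loss, by one supported in a fixed bounded region and of bounded total mass there --- conveniently after the auxiliary reduction of replacing $g$ by a compactly supported $g_{m}\uparrow g$ and letting $m\to\infty$. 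Second, $\mu_{k}$ can accumulate on the set $\{h=0\}$, where mass is ``free'' and hence invisible to $\int h\,d\mu_{k}$; this is circumvented by carrying out the whole argument with $h$ replaced by $h+\eta w$ for a fixed strictly positive $w\in\co$ --- so that the weight is bounded below on every bounded set --- and then letting $\eta\downarrow0$, with $N^{h}(f,g)<\infty$ guaranteeing that all costs stay bounded and that the relevant infima and suprema commute with this limit. Threading these approximations together --- in $k$, in $m$, and in $\eta$ --- and keeping track of the errors is the bulk of the proof; consistent with the authors' remark that removing the compact-support hypothesis of Theorem~\ref{thm:StrongDuality-Co-1} is ``usually quite technical'', essentially the whole weight of the argument falls on this step.
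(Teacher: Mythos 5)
Your reduction is the same as the paper's: truncate $f$ to compactly supported $f_{k}\in\co$, apply Theorem \ref{thm:StrongDuality-Co-1} to each $f_{k}$, use weak duality and the monotonicities $M^{h}(f_{k},\gr)\le M^{h}(f,\gr)$, $N^{h}(f_{k},g)\le N^{h}(f,g)$, and observe that everything hinges on the single inequality $\lim_{k}N^{h}(f_{k},g)\ge N^{h}(f,g)$. But that inequality is the entire content of the theorem, and you do not prove it. Your compactness route is only a sketch: the bound $\int h\,d\mu_{k}\le L+\eps$ gives no control on $\mu_{k}(\R^{n})$ (precisely because $h$ vanishes at infinity and may vanish elsewhere), so Banach--Alaoglu in $\mm=\co^{*}$ is not available and the weak-$*$ limit $\mu$ you test against $g(x-\cdot)$ does not exist as stated. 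The repairs you gesture at are not shown to work and it is unclear that they do: replacing $h$ by $h+\eta w$ with $w>0$ requires knowing that near-optimal covering measures for the weight $h$ have finite (indeed uniformly small, after scaling by $\eta$) $w$-cost, i.e.\ that $N^{h+\eta w}(f_{k},g)\to N^{h}(f_{k},g)$ as $\eta\downarrow0$, which is exactly the kind of statement you would need to prove and for which the hypotheses give no obvious handle; and the truncation $g_{m}\uparrow g$ introduces a third limit whose interchange with the others is asserted, not argued. You yourself flag that ``the bulk of the proof'' lies in threading these limits together, so by your own accounting the proof is not there.

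The paper closes this gap with a short device you are missing, and it uses the hypotheses $\int g<\infty$ (and integrability of $h$) rather than any compactness. Since $f\in\co$, the truncations converge uniformly: $\sup_{x}|f(x)-f_{k}(x)|<\delta$ for large $k$. Choose $c>0$ with $c\int h\,dx=\eps/2$ and set $\delta=c\int g\,dx$. If $\mu_{k}$ is a covering measure of $f_{k}$ by $g$ with $\int h\,d\mu_{k}<N^{h}(f,g)-\eps/2$, then because $\left(c\lambda*g\right)(x)\equiv c\int g=\delta$ for Lebesgue measure $\lambda$, the measure $\mu_{k}+c\lambda$ satisfies
\[
\left(\mu_{k}+c\lambda\right)*g\ \ge\ f_{k}+\delta\ \ge\ f,
\]
so it covers $f$ by $g$ at total cost $\int h\,d\mu_{k}+c\int h<N^{h}(f,g)$, a contradiction; hence $\lim_{k}N^{h}(f_{k},g)\ge N^{h}(f,g)$. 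No limit measure is ever constructed. If you want to salvage your approach, either import this ``add a small multiple of Lebesgue measure'' argument, or genuinely carry out the tightness analysis you deferred --- as written, the proposal identifies the difficulty correctly but does not resolve it.
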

\begin{thm}
\label{thm:SD_ext_1}Let $f,g:\R^{n}\to\R^{+}$ be measurable. Suppose
that $\left(g_{k}\right)\sub\co$ is a non-increasing sequence converging
point-wise to $g$, and that $N^{1}\left(f,g\right)<\infty$. Then
$M^{1}\left(f,\gr\right)=N^{1}\left(f,g\right)=\lim N^{1}\left(f,g_{k}\right)$.
Moreover, there exists a covering measure $\mu$ of $f$ by $g$ such
that $\mu\left(\R^{n}\right)=N^{1}\left(f,g\right)$. 
\end{thm}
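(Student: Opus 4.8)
The plan is to reduce the statement to Theorem~\ref{thm:StrongDuality} via the monotone approximation $g_k \downarrow g$. First I would record the monotonicity in $g$ of the covering number: since $g_k \ge g$ pointwise, every covering measure of $f$ by $g$ is also a covering measure of $f$ by $g_k$ (because $\mu * g_k \ge \mu * g \ge f$), so $N^1(f,g_k) \le N^1(f,g)$; and the sequence $N^1(f,g_k)$ is non-decreasing in $k$ because $g_k$ is non-increasing. Hence $L := \lim_k N^1(f,g_k)$ exists and satisfies $L \le N^1(f,g) < \infty$. The reverse inequality $N^1(f,g) \le L$ is the crux: I would take, for each $k$, a covering measure $\mu_k$ of $f$ by $g_k$ with total mass within $1/k$ of $N^1(f,g_k)$ (or, using Theorem~\ref{thm:StrongDuality} applied to $g_k \in \co$, an \emph{optimal} covering measure with $\mu_k(\R^n) = N^1(f,g_k)$ — this requires checking that $f \in \co$ is not needed there, only that $g_k \in \co$ and that a finite covering measure exists, which holds since $N^1(f,g)<\infty$ gives one that also covers by $g_k$). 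The masses $\mu_k(\R^n)$ are bounded by $N^1(f,g)$, so by weak-$*$ compactness of bounded sets of regular Borel measures (on a suitable compactification, or after localizing the support — see below) a subsequence $\mu_{k_j}$ converges weak-$*$ to some finite measure $\mu$ with $\mu(\R^n) \le L$.

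The main obstacle, as usual in these limiting arguments, is twofold: controlling the support of the $\mu_k$ so that no mass escapes to infinity, and passing the inequality $\mu_k * g_k \ge f$ to the limit. For the support, I would argue that since $g$ is not identically zero and $f$ is (effectively) concentrated where it is positive, a covering measure cannot place mass too far from $\supp f$ without wasting it; more precisely, one can truncate each $\mu_k$ to a large ball $RB_2^n$ chosen uniformly in $k$, using that $g \le g_1 \in \co$ so $g_1$ is small outside a compact set, which forces any near-optimal covering measure to be essentially supported near $\supp f$ — this is the step I expect to be the most technical, and it may be cleanest to first prove the result under the additional assumption that $f$ is compactly supported and then remove it by exhausting $f$ from below by compactly supported functions $f \wedge \one_{RB_2^n}$ and using monotonicity of $N^1(\cdot, g)$ in the first argument. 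For the limit of the convolution inequality, I would fix $x$ and a large ball $B$, and note that $\mu_{k_j} * g(x) \ge \mu_{k_j} * g_\ell(x)$ is wrong in the wrong direction, so instead I fix $\ell$, use that for $k_j \ge \ell$ we have $g_{k_j} \le g_\ell$, hence $\mu_{k_j} * g_{k_j}(x) \le \mu_{k_j} * g_\ell(x)$ — again the wrong direction. The correct route: from $\mu_{k_j} * g_{k_j} \ge f$ and $g_{k_j} \ge g$ we only get $\mu_{k_j} * g_\ell \ge \mu_{k_j} * g_{k_j} \ge f$ for $k_j \ge \ell$; now $g_\ell \in \co$ is bounded and continuous (modulo cutting off near infinity, handled by the support control above), so weak-$*$ convergence gives $\mu * g_\ell(x) \ge f(x)$ for every $\ell$, and then monotone convergence ($g_\ell \downarrow g$) yields $\mu * g(x) \ge f(x)$. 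Thus $\mu$ is a covering measure of $f$ by $g$ with $\mu(\R^n) \le L \le N^1(f,g)$, forcing $\mu(\R^n) = N^1(f,g) = L$, which simultaneously establishes the equality of the limit and the existence of the optimal covering measure.

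Finally, the equality $M^1(f,g_-) = N^1(f,g)$ follows by combining $M^1(f,g_-) \le N^1(f,g)$ (weak duality, Proposition~\ref{prop:WeakDuality}, valid for all measurable non-negative $f,g$) with the chain $M^1(f,g_-) \ge M^1(f,(g_k)_-) = N^1(f,g_k)$: here the inequality $M^1(f,g_-) \ge M^1(f,(g_k)_-)$ holds because $g_k \ge g$ implies $(g_k)_- \ge g_-$, so any separation measure $\rho$ with $\rho * (g_k)_- \le 1$ also satisfies $\rho * g_- \le 1$, hence is admissible for $M^1(f,g_-)$; and the equality $M^1(f,(g_k)_-) = N^1(f,g_k)$ is exactly Theorem~\ref{thm:StrongDuality} applied to $g_k \in \co$ (whose hypothesis, existence of a finite covering measure, is met as noted above). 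Letting $k \to \infty$ gives $M^1(f,g_-) \ge \lim_k N^1(f,g_k) = N^1(f,g)$, and together with weak duality this closes the loop.
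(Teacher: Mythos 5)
Your proposal is essentially the paper's own argument: take near-optimal covering measures $\mu_k$ of $f$ by $g_k$, extract a weak$^*$ limit $\mu$, use $\mu_k*g_\ell\ge f$ for $k\ge\ell$ and pass to the limit in $k$ by testing against $g_\ell(x-\cdot)\in\co$, let $\ell\to\infty$ by monotone/dominated convergence, bound $\mu(\R^n)$ by lower semicontinuity of total mass under weak$^*$ convergence, and then get $M^1(f,\gr)=N^1(f,g)$ from weak duality together with Theorem \ref{thm:StrongDuality} applied to each $g_k$. The one place you diverge is the step you flag as the most technical --- controlling escape of mass to infinity by truncating the $\mu_k$ and exhausting $f$ by compactly supported functions --- and this is in fact unnecessary: working in $\mm=\co^{*}$ (or with vague convergence plus the uniform mass bound), $g_\ell(x-\cdot)\in\co$ is already an admissible test function, and any mass lost at infinity only decreases $\mu(\R^n)$, which is the favorable direction; note also that literally truncating $\mu_k$ to a ball would only give $\mu_k*g_k\ge f-\eps$, so it is fortunate that no such support control is needed. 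Finally, you rightly flag that invoking Theorem \ref{thm:StrongDuality} for the pair $(f,g_k)$ uses only $g_k\in\co$ and the existence of a finite covering measure, while $f$ here is merely measurable --- a point the paper's proof glosses over.
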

Finally, in the case most relevant for convex geometry, namely that
of $h=1$ and where $f$ and $g$ are geometric log-concave functions,
we have again a strong duality result. More precisely, let $LC_{g}(\R^{n})$
denote the class of functions $f:\R^{n}\to[0,1]$ which are upper
semi continuous, $-\log f$ is convex, and $f(0)=0$. These are called
geometric log-concave functions and play a central role in convex
geometry and its functional extensions. The following theorem holds,
and its proof will appear in \cite{Slo17}. 
\begin{thm}
\label{thm:Sd-lcf}Let $f,g\in\lcg$. Then $M(f,g_{-})=N(f,g)$. 
\end{thm}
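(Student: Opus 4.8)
The plan is to reduce the statement to the strong‑duality theorems already established. The inequality $M(f,g_-)\le N(f,g)$ is weak duality (Proposition~\ref{prop:WeakDuality}), so only $N(f,g)\le M(f,g_-)$ must be shown, and I would argue according to whether $N(f,g)$ is finite. Suppose first that $N(f,g)<\infty$ and that $g$ \emph{vanishes at infinity}. Any function in $\lcg$ is upper semicontinuous and bounded by $1$; hence, if it also vanishes at infinity, it is the non‑increasing pointwise limit of its $m$‑Lipschitz upper regularizations $g_{m}(x)=\sup_{y}\{\,g(y)-m|x-y|\,\}$, each of which is continuous and vanishes at infinity, so $g_{m}\in\co$ and $g_{m}\downarrow g$. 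Theorem~\ref{thm:SD_ext_1} then applies verbatim to $f$ and $g$ and yields $M(f,g_-)=N(f,g)$. Since a log‑concave function is integrable exactly when it tends to $0$ at infinity, and since for integrable $f,g\in\lcg$ one always has $N(f,g)<\infty$ — a finite‑mass covering measure, adapted to the shape of $f$, can be written down explicitly — this already proves the theorem whenever $\lcg$ is understood to consist of integrable functions.

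The substantive case is $N(f,g)<\infty$ with $g$ \emph{not} vanishing at infinity, i.e.\ with some super‑level set $\{g\ge c\}$ being an unbounded convex set. This is outside the reach of the $\co$‑based theorems: no element of $\co$ dominates such a $g$ from above, so there is no sequence $g_{m}\downarrow g$ in $\co$ to feed into Theorem~\ref{thm:SD_ext_1}. What is needed instead is a zero‑gap linear‑programming duality in a function space that, unlike $\co$, accommodates bounded upper semicontinuous functions which do not vanish at infinity — a strengthening of the unified Theorem~\ref{thm:SD_cov_gen}. The route I would take is to isolate the asymptotic part of $g$: let $R$ be the recession cone of the super‑level sets of $g$, a closed convex cone along which $-\log g$ is non‑increasing; then analyze the problem separately along the directions of $R$, where $g$ has a nontrivial asymptotic behaviour and an explicit translation‑invariant separating measure attaining the covering value can be written down, and along the complementary directions, where $g$ is effectively vanishing and the previous paragraph applies. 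The remaining case $N(f,g)=\infty$ should fall out of the same considerations: one exhibits directly an admissible separating measure — a multiple of Lebesgue measure on a suitable subspace or slab, adapted to whichever of $f$ or $g$ is responsible for the degeneracy — whose $f$‑integral is infinite.

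The hard part, and where I would expect the bulk of the effort to go, is the non‑vanishing case: proving the zero‑gap duality in a space larger than $\co$. As always in infinite‑dimensional linear programming, the delicate point is the absence of a duality gap — a closedness/stability condition on the constraint cone — and, because mass can escape to infinity along maximizing sequences of separating measures (precisely the reason the $\co$‑based theorems insist on $g\in\co$), the required compactness is not available off the shelf and must be recovered from the recession structure of $g$. This is exactly the step at which the additional machinery announced in \cite{Slo17} is needed.
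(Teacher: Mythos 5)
You should note at the outset that the paper does not actually prove Theorem \ref{thm:Sd-lcf}: the proof is explicitly deferred to \cite{Slo17}, and the only indication given is the remark after Theorem \ref{thm:Vol_general} that the equality ``follows from approximation arguments''. Your first paragraph is precisely such an approximation argument, and under the paper's working definition of $\lcg$ (Section \ref{sec:Volume-bounds}: geometric log-concave functions with finite and positive integral) it already covers every case of the theorem: a geometric log-concave $g$ with $\int g<\infty$ decays exponentially, hence vanishes at infinity, so your Lipschitz regularizations $g_m$ lie in $\co$ and decrease pointwise to $g$; and $N(f,g)<\infty$ follows from the explicit covering measure with density $f^{2}\left((x+x_0)/2\right)/\norm{f*g}_{\infty}$ used in the proof of Theorem \ref{thm:Vol_even}. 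Consequently, the ``substantive case'' to which you devote most of the proposal --- $g$ with an unbounded superlevel set, hence $\int g=\infty$ --- simply does not occur for $g\in\lcg$, and for that case you in any event offer a programme (recession cones, a duality space larger than $\co$) rather than a proof.

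The genuinely delicate point is elsewhere, and your proposal passes over it: the application of Theorem \ref{thm:SD_ext_1} ``verbatim'' to $f\in\lcg$. The statement of that theorem allows merely measurable $f$, so formally you may cite it; but its proof in the paper obtains $N(f,g_k)=M(f,(g_k)_-)$ from Theorem \ref{thm:StrongDuality}, which requires $f\in\co$, whereas an $f\in\lcg$ is only upper semicontinuous (think of $f=\one_{K}$ for a convex body $K$ containing the origin), and an upper semicontinuous function cannot in general be reached monotonically from below by continuous functions, so the easy approximations run in the wrong direction for $f$. Thus a fully self-contained proof still has to handle discontinuous $f$ (and, if one insists on the integrability-free definition of $\lcg$ from the introduction, non-integrable $g$ as well); this is presumably the actual content of the argument deferred to \cite{Slo17}. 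In short, your reduction is sound as a reduction to the paper's stated Theorem \ref{thm:SD_ext_1} and settles the theorem for the class $\lcg$ the paper actually uses, but the effort is misallocated: the unbounded-$g$ analysis is unnecessary, while the discontinuity of $f$, where the remaining mathematical work lies, is treated as routine.
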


\subsubsection{Volume estimates}

A main tool in estimating classical covering numbers are so called
``volume bounds'', where the covering numbers are bounded from above
and from below by ratios of volumes. We provide two such bounds, for
geometric log-concave functions, where volume is replaced by integral.
We use $*$ to denote usual convolution as above, and use $\star$
to denote the sup-convolution operation, defined by $(f\star g)(x)=\sup_{z}f(z)g(x-z)$
(and sometimes playing the role of Minkowski addition in the functionalization
of convex geometry). We show
\begin{thm}
Let $f,g\in LC_{g}(\R^{n})$, then 
\[
\frac{\int f^{2}\left(x\right)dx}{\|f*g_{-}\|_{\infty}}\le N\left(f,g\right)\le2^{n}\frac{\int f^{2}\left(x\right)dx}{\|f*g_{-}\|_{\infty}}.
\]
and for every $p>1$
\[
\frac{\int f\left(x\right)dx}{\int g\left(x\right)dx}\le N\left(f,g\right)\le\frac{\int\left(f\star g_{-}^{p-1}\right)\left(x\right)dx}{\int g_{-}^{p}\left(x\right)dx},
\]
\end{thm}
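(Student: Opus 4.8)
The plan is to derive the four inequalities from the two strong duality statements already available (Theorems~\ref{thm:StrongDuality} and~\ref{thm:Sd-lcf}) together with the right choices of test measures, much as in the classical geometric volume bounds. Write $I(f)=\int f$ for brevity. For the lower bounds I would use the duality $N(f,g)=M(f,g_-)$ and exhibit a single ``spread out'' separation measure; for the upper bounds I would directly construct a covering measure and estimate $\int\,d\mu$ (equivalently $\int h\,d\mu$ with $h\equiv1$).

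First, the elementary $L^1$-ratio lower bound $\int f/\int g\le N(f,g)$: if $\mu*g\ge f$ pointwise then integrating over $\R^n$ and using $\int(\mu*g)=\mu(\R^n)\int g$ (Tonelli, both factors nonnegative) gives $\mu(\R^n)\ge\int f/\int g$, so taking the infimum over covering measures yields the claim. For the matching upper bound with parameter $p>1$, the natural candidate is the measure $d\mu(t)=c\,g_-^{\,p-1}(-t)\,dt$ for a suitable constant $c$; then $(\mu*g)(x)=c\int g(x-t)g_-^{p-1}(-t)\,dt=c\int g(x+s)g^{p-1}(s)\,ds$, and log-concavity of $g$ (so that $g(x+s)g^{p-1}(s)$ compares to a shifted power of $g$) should force $(\mu*g)(x)\ge f(x)$ once $c$ is taken to be $1/\int g_-^{\,p}$; the total mass is then $c\int g_-^{\,p-1}=\int(f\star g_-^{p-1})/\int g_-^{\,p}$ after replacing the crude bound $f\le\|f\|_\infty\le1$ by the sharper sup-convolution $f\star g_-^{p-1}$ in the numerator. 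The point of $\star$ here is exactly that $(f\star g_-^{p-1})(x)=\sup_z f(z)g_-^{p-1}(x-z)$ is the smallest function dominating the relevant convolution integrand pointwise after the log-concavity comparison, so this is where the $\star$ enters.

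For the $L^2$-type bounds, I would again pass through $N(f,g)=M(f,g_-)$. For the lower bound $\int f^2/\|f*g_-\|_\infty\le N(f,g)$, the test separation measure is $d\rho(t)=c\,f(t)\,dt$: the constraint $\rho*g_-\le h\equiv1$ becomes $c\,(f*g_-)(x)\le1$ for all $x$, which holds precisely when $c=1/\|f*g_-\|_\infty$, and then $\int f\,d\rho=c\int f^2=\int f^2/\|f*g_-\|_\infty$; by weak duality (Proposition~\ref{prop:WeakDuality}) this is $\le M(f,g_-)=N(f,g)$. For the upper bound $N(f,g)\le2^n\int f^2/\|f*g_-\|_\infty$, I would construct a covering measure supported where $f$ is largest: roughly, pick $x_0$ realizing (or nearly realizing) $\|f*g_-\|_\infty$, and use $d\mu(t)=c\,f(t)\mathbbm 1_{A}(t)\,dt$ on a set $A$ on which $f$ is comparable to $f(x_0-\cdot)$; the factor $2^n$ is the usual loss from the Rogers–Shephard/difference-body mechanism, reflecting that $f^2$ (rather than $f\cdot f_-$) appears and the body $\{f\ge t\}$ must be replaced by a difference body, costing $\binom{2n}{n}\le4^n$ in volume but $2^n$ after the square-root/normalization bookkeeping typical of these functional analogues.

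The main obstacle I expect is the pointwise verification of the covering constraint $\mu*g\ge f$ for the proposed measures — in both upper bounds this is not a formal integration but requires genuine use of log-concavity (and upper semicontinuity, $g(0)=1$) to compare a convolution integral against the target $f$, and getting the constants to come out as exactly $2^n$ and as the stated $p$-ratio is delicate: one must choose the truncation set $A$ (or the normalizing constant $c$) optimally and invoke a Prékopa–Leindler– or Rogers–Shephard–type inequality in the functional form. A secondary technical point is ensuring finiteness of all integrals and the applicability of the strong duality theorems (one needs a finite covering measure to exist, which follows once the constructed $\mu$ is shown to work), but that is routine once the constructions are in place.
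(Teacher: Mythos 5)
Your two lower bounds are correct and coincide with the paper's argument: integrating $\mu*g\ge f$ over $\R^{n}$ gives $\int f/\int g\le N(f,g)$, and the separation measure with density $f/\norm{f*\gr}_{\infty}$ together with weak duality (Proposition \ref{prop:WeakDuality}) gives $\int f^{2}/\norm{f*\gr}_{\infty}\le M(f,\gr)\le N(f,g)$.

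Both upper bounds, however, have genuine gaps. For the $p$-bound, your candidate covering measure $d\mu(t)=c\,g_{-}^{p-1}(-t)\,dt$ does not involve $f$ at all and cannot cover in general: take $f=\one_{RB_{2}^{n}}$ with $R$ large and $g=\one_{B_{2}^{n}}$; then $(\mu*g)(x)=c\int g(x-t)g^{p-1}(t)\,dt$ vanishes for $|x|>2$ while $f(x)=1$ there, and its total mass is of order $1$ whereas $N(f,g)\gtrsim R^{n}$, so no choice of $c$ and no appeal to log-concavity repairs it (your identity ``$c\int g_{-}^{p-1}=\int(f\star g_{-}^{p-1})/\int g_{-}^{p}$'' is also false as written). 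The paper does not construct a covering measure here: it bounds the separation number, namely for any $\rho$ with $\rho*\gr\le1$ the \emph{trivial} pointwise inequality $f(y)\,g_{-}^{p-1}(x-y)\le\left(f\star g_{-}^{p-1}\right)(x)$ gives $\int f\,d\rho\cdot\int g_{-}^{p}\le\int f\star g_{-}^{p-1}$, and then the strong duality $N(f,g)=M(f,\gr)$ for geometric log-concave functions (Theorem \ref{thm:Sd-lcf}) converts this into the stated upper bound; no log-concavity enters the estimate itself, only the duality. For the $2^{n}$-bound, your measure $c\,f\,\one_{A}$ is never shown to satisfy $\mu*g\ge f$, and your explanation of the constant (a Rogers--Shephard loss of $\binom{2n}{n}$ followed by ``square-root bookkeeping'') is not how it arises. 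The paper's covering measure has density $f^{2}\left(\frac{x+x_{0}}{2}\right)/\norm{f*g}_{\infty}$, where $x_{0}$ is a maximizer of $f*g$; log-concavity with $f(0)=\max f=1$ yields the single inequality $f^{2}\left(\frac{x-y+x_{0}}{2}\right)\ge f(x)\,f(x_{0}-y)$, which makes $\mu*g\ge f$ exact, and the factor $2^{n}$ is merely the Jacobian in $\int f^{2}\left(\frac{x-x_{0}}{2}\right)dx=2^{n}\int f^{2}$. Without these two mechanisms (the duality route for the $p$-bound and the midpoint-squared density for the $2^{n}$-bound), the upper halves of the theorem are not proved.
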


\subsubsection{Functional $M$-position}

We provide a covering-number definition for the $M$-position of a
convex body, and show that it is equivalent to a volume-type definition
in the spirit of Klartag and Milman \cite{KM05}. We show that there
exists a universal constant $C>0$ such that every geometric log-concave
function has an $M$-position with constant $C$, and as a result
get some extensions of the functional reverse Brunn-Minkowski inequality
of Klartag and Milman, in particular to the non-even case. Denoting
by $g_{0}:\R^{n}\to(0,1]$ the gaussian $g_{0}(x)=\exp(-|x|^{2}/2)$
we show
\begin{thm}
\label{thm:MpositionMAIN}There exists a universal constant $C>0$,
and for any $n$ and any function $f\in LC_{g}(\R^{n})$ there exists
$T_{f}\in GL_{n}$, such that denoting $\tilde{f}=f\circ T_{f}$ we
have that $\int f=(2\pi)^{n/2}$ and the following properties hold:
\[
\max\{N(\tilde{f},g_{0}),N(\tilde{f}^{*},g_{0}),N(g_{0},\tilde{f}),N(g_{0},\tilde{f}^{*})\}\le C^{n}
\]
and, for every $h\in LC_{g}(\R^{n})$
\[
\frac{1}{C^{n}}\int g_{0}\star h\le\int f\star h\le C^{n}\int g_{0}\star h
\]
 and 
\[
\frac{1}{C^{n}}\int g_{0}^{*}\star h\le\int f^{*}\star h\le C^{n}\int g_{0}^{*}\star h.
\]
\end{thm}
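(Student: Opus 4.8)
The plan is to prove Theorem~\ref{thm:MpositionMAIN} in two largely independent steps. Step~I is an \emph{equivalence}: for $u\in\lcg$ with $\int u=(2\pi)^{n/2}$ (and maximum attained at $0$, which we may assume), the four covering bounds $\max\{N(u,g_{0}),N(u^{*},g_{0}),N(g_{0},u),N(g_{0},u^{*})\}\le C^{n}$ are equivalent --- up to replacing $C$ by a universal power of itself --- to the two reverse Brunn--Minkowski inequalities $\frac{1}{C^{n}}\int g_{0}\star h\le\int u\star h\le C^{n}\int g_{0}\star h$ and $\frac{1}{C^{n}}\int g_{0}\star h\le\int u^{*}\star h\le C^{n}\int g_{0}\star h$ required for all $h\in\lcg$, where $g_{0}$ is self-dual. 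Step~II produces, from $f\in\lcg$, a map $T_{f}\in GL_{n}$ such that $\tilde f=f\circ T_{f}$, normalized so $\int\tilde f=(2\pi)^{n/2}$, satisfies the two volume inequalities; Step~I then upgrades this to the covering bounds, and the theorem follows.

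\textbf{Step~I.} For ``covering $\Rightarrow$ volume'' I would argue directly: if $\mu$ is a (near-)optimal covering measure of $u$ by $g_{0}$, so $u\le\mu*g_{0}$ and $\mu(\R^{n})\le C^{n}$ (attainment by Theorem~\ref{thm:Sd-lcf}), then absorbing the convolution into the supremum gives, for every $h\in\lcg$, the pointwise estimate $u\star h\le(\mu*g_{0})\star h\le\mu*(g_{0}\star h)$; integrating yields $\int u\star h\le\mu(\R^{n})\int g_{0}\star h\le C^{n}\int g_{0}\star h$, and the companion lower bound, together with the two inequalities for $u^{*}$, comes from the remaining three covering bounds. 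For ``volume $\Rightarrow$ covering'' I would feed in the second volume estimate of the preceding theorem, $N(u,v)\le\int(u\star v_{-}^{p-1})\,/\int v_{-}^{p}$, with $p=2$: taking $v=g_{0}$ turns the hypothesis with $h=g_{0}^{p-1}$ into the explicit Gaussian ratio $\int(g_{0}\star g_{0})/\int g_{0}^{2}=2^{n}$, which bounds $N(u,g_{0})$, and similarly $N(u^{*},g_{0})$ via the second hypothesis; taking $v=u$ gives $N(g_{0},u)\le\int(g_{0}\star u_{-})/\int u_{-}^{2}$, whose numerator equals $\int g_{0}\star u\le C^{n}\int g_{0}\star g_{0}$ by the hypothesis, and whose denominator $\int u^{2}$ is bounded below by a universal power because a log-concave function in reverse Brunn--Minkowski position is Gaussian-like in the relevant senses (its low $L^{p}$-norms are $\ge c^{n}$), with $\int u^{*}$ kept away from zero by the functional Bourgain--Milman inequality of Klartag and $\int u=(2\pi)^{n/2}$; the same argument handles $N(g_{0},u^{*})$. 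Establishing the needed facts about functions in reverse Brunn--Minkowski position is the technical part of this step.

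\textbf{Step~II: the associated body.} Here I would transfer the classical existence of an $M$-ellipsoid through a convex body attached to $f$, in the spirit of \cite{KM05}. After a preliminary $GL_{n}$ adjustment assume $f$ centered and normalized, write $f=e^{-\varphi}$ with $\varphi$ convex, $\varphi\ge0$, and for $m=n$ --- a limiting argument in $m$, as in \cite{KM05}, removes dimension-mismatch artifacts --- form
\[
K_{f}=\bigl\{(x,y)\in\R^{n}\times\R^{m}\ :\ |y|\,e^{\varphi(x/|y|)/m}\le1\bigr\},
\]
convex because $e^{\varphi/m}$ is convex and its perspective $(x,y)\mapsto|y|\,e^{\varphi(x/|y|)/m}$ is convex and $1$-homogeneous. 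I would record: (i) $\vol_{n+m}(K_{f})=\omega_{m}\int_{0}^{\infty}s^{n/m}\vol_{n}(\{f\ge s\})\,ds$, comparable (in the relevant root) to $\int f$, and $K_{f\star g}$ comparable up to a universal homothety to the Minkowski sum $K_{f}+K_{g}$ --- the Ball--Klartag--Milman dictionary turning $\star$ into $+$; (ii) the construction is $GL_{n}$-equivariant in the base, $K_{f\circ T}=(T^{-1}\oplus I_{m})K_{f}$, and $K_{g_{0}}$ is, up to a universal constant, the fixed ellipsoid $\{(x,y):|x|^{2}/m+|y|^{2}\le c\}$, a fixed linear image of the Euclidean ball; (iii) $(K_{f})^{\circ}$ agrees, up to a universal homothety, with $K_{f^{*}}$ --- the functional Santal\'o/Rogers--Shephard correspondence, where in the non-even case one uses the \emph{barycentred} versions of these inequalities (Fradelizi--Meyer), which is why $f$ was centered first.

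\textbf{Step~II: the $M$-position and the crux.} Now apply the classical theorem to obtain an $M$-ellipsoid for $K_{f}$. The delicate point, which I expect to be the main obstacle, is that this position must be realized by a transformation acting \emph{only on the base} $\R^{n}$, since a general element of $GL_{n+m}$ corresponds to no $f\circ T_{f}$. Here I would exploit that $K_{f}$ is invariant under $I_{n}\oplus O(m)$ and $1$-homogeneous to arrange, by a John-type extremal selection among witnessing ellipsoids of prescribed volume, that the $M$-ellipsoid is $I_{n}\oplus O(m)$-invariant, hence of the block form $\{q(x)+\lambda|y|^{2}\le1\}$ for a positive-definite quadratic form $q$ on $\R^{n}$; straightening it to a ball is the block-diagonal map $A^{-1}\oplus\sqrt{\lambda}\,I_{m}$, which after an overall dilation (absorbed into the normalization of $f$) has the form $T_{f}^{-1}\oplus I_{m}$. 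Thus $K_{f\circ T_{f}}$ is in classical $M$-position; plugging this, and the analogous positions of an arbitrary $h\in\lcg$, into the reverse Brunn--Minkowski inequality for convex bodies and invoking (i)--(iii) yields the two volume inequalities for $\tilde f=f\circ T_{f}$. Step~I converts them to the four covering bounds, and a final scaling arranges $\int\tilde f=(2\pi)^{n/2}$ at the cost of a universal factor. The two points I expect to demand real work are (a) this equivariance --- that the $M$-position of $K_{f}$ can be reached inside the subgroup $GL_{n}\times\{I_{m}\}$, leaning on the $O(m)$-symmetry and $1$-homogeneity of $K_{f}$ and on stability of the $M$-position under bounded perturbations --- and (b) upgrading the even functional Santal\'o/Rogers--Shephard inputs in~(iii) to the non-even setting; the remaining correspondences and the bookkeeping of normalizing dilations are routine.
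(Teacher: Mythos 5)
Your Step I is essentially the paper's equivalence between the covering and volume forms of the functional $M$-position (Theorem \ref{thm:M_pos_equiv}): the ``covering $\Rightarrow$ volume'' direction via $\int u\star h\le\int\mu*(g_{0}\star h)=\mu(\R^{n})\int g_{0}\star h$ and the ``volume $\Rightarrow$ covering'' direction via Theorem \ref{thm:Vol_general} with $p=2$ are exactly the paper's arguments. Note only that the lower bound on the denominator needs no reverse Brunn--Minkowski or Bourgain--Milman input: for $u\in\lcg$ one has $u(x)\le u^{2}(x/2)$, hence $\int u^{2}\ge2^{-n}\int u=2^{-n}(2\pi)^{n/2}$, which is how the paper argues.

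The genuine gap is in Step II, precisely at the point you flag as the crux: that the $M$-position of the $(n+m)$-dimensional perspective body can be realized inside $GL_{n}\times\{I_{m}\}$. The family of ``witnessing'' ellipsoids (prescribed volume, covering bounds $\le C^{n+m}$ against $K_{f}$) is $I_{n}\oplus O(m)$-invariant but not convex, so no John-type extremal selection forces an invariant choice by uniqueness; and the obvious symmetrizations fail quantitatively: replacing a candidate $M$-ellipsoid by the L\"owner ellipsoid of the convex hull of its $O(m)$-orbit (or averaging its quadratic form over $O(m)$) is only controlled up to a factor $\sqrt{n+m}$ in the radii, i.e.\ $n^{cn}$ in volumes and covering numbers, which is fatal at the $C^{n}$ scale, and there is no usable ``stability under bounded perturbations'' because the perturbation from an arbitrary $M$-ellipsoid to a block-diagonal one is not bounded. (A further wrinkle: an overall dilation of $\R^{n+m}$ does not correspond to precomposing $f$ with a map of $\R^{n}$; it multiplies the values of $f$, leaving $\lcg$.) This unproved equivariance is essentially the hard core of reproving Theorem \ref{thm:KM_inv-BM} from scratch, and the paper avoids it entirely: it either quotes Theorem \ref{thm:KM_inv-BM} as a black box (Proposition \ref{prop:existsMpos}), or, in its direct proof, replaces the lifted body by the $n$-dimensional level set $K_{f}=\{f>e^{-n}\}$, for which $\vol(K_{f})\sim\int f$ and Milman's theorem applies in $\R^{n}$ itself, so every admissible linear map is automatically of the form $T_{f}$ (Lemmas \ref{lem: Direct_M_1}--\ref{lem: Direct_M_4}). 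Two secondary points: the final deduction of $\int\tilde{f}\star h\sim\int g_{0}\star h$ for \emph{arbitrary} $h\in\lcg$ cannot come from reverse Brunn--Minkowski applied to the pair $(K_{\tilde{f}},K_{h})$, since $h$ is not positioned; you should instead first extract the covering bounds $N(\tilde{f},g_{0}),N(g_{0},\tilde{f})\le C^{n}$ from the body-level covering (which requires a level-set decomposition as in Lemma \ref{lem:Direct_M_2}, absent from your sketch) and then invoke your Step I. And the dual bounds in your scheme rest on the asserted polarity correspondence $(K_{f})^{\circ}\approx K_{f^{*}}$ for the lifted body, which is not proved (and is delicate in the non-even case); the paper instead proves and uses the functional K\"onig--Milman theorem (Theorem \ref{thm:Func_Konig_Milman}), the Fradelizi--Meyer inclusions, and Lemma \ref{lem:replace_by_even} to reduce the centered non-even case to the even one.
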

Here we denoted for $f=\exp(-\varphi)$ its log-Legendre dual by $f^{*}=\exp(-{\cal L}\varphi)$
where ${\cal L}\varphi(y)=\sup\left(\iprod yx-\varphi(x)\right)$
is the Legendre transform. As a tool in the proof of this theorem,
but also of independent interest, we give a König-Milman \cite{KonigMilman87}
type result connecting the covering of $f$ by $g$ and the covering
of $g^{*}$ by $f^{*}$ which are their log-Legendre duals. We show
that there exists a universal $C>0$ (independent of dimension) such
that for any $n$ and any $f,g\in LC_{g}(\R^{n})$ we have 
\[
C^{-n}N\left(g^{*},f^{*}\right)\le N\left(f,g\right)\le C^{n}N\left(g^{*},f^{*}\right).
\]

The paper is organized as follows. In Section \ref{sec:Basic-identities-and}
we gather the basic identities and simple inequalities for functional
covering numbers, both for use in this paper and a for future reference.
In Section \ref{sec:StrongDuality} we prove Theorems \ref{thm:StrongDuality-Co-1}
up to \ref{thm:SD_ext_1}. To this end we start with a weak duality
result, then prove an infinite dimensional linear programing duality
result, which serves as the main ingredient in the proofs. In Section
\ref{sec:Volume-bounds} we prove the volume bounds described above.
In Section \ref{sec:Functional-Hadwiger-conjecture} we discuss Hadwiger's
conjecture, we show it is valid in the functional setting for even
functions, and provide some bound for the general case. Finally, in
Section \ref{sec:M-position-for} we define functional $M$-position
in two different ways, one via volume and the other via covering,
and show that they are equivalent. We then prove a König-Milman type
geometric duality result, connecting the covering of $f$ by $g$
and that of their Legendre duals. Finally, we give two proofs that
every centered geometric log-concave function admits a functional
$M$-position with a universal constant $C>0$. One proof using the
functional reverse Brunn-Minkowski inequality of Klartag and Milman,
and the other following directly from the geometric theorem of Milman
on the existence of $M$-positions for bodies. 

\subsection*{Acknowledgments}

The first named author was supported by ISF grant number 665/15.

\section{Basic identities and inequalities\label{sec:Basic-identities-and}}

Since this note is the first time the functional covering numbers
$N(f,g,h)=N^{h}(f,g)$ and the functional separation numbers $M(f,g,h)=M^{h}(f,g)$
are introduced, we devote a section to pointing out some of the useful
properties of these numbers. The proofs for most of the facts below
follow directly from the definitions and are thus omitted. We leave
only the ones which are slightly less self-evident. 

\subsubsection*{Linear transformations}
\begin{fact}
Define $u_{a}(x)=u(x-a)$ for $u:\R^{n}\to[0,\infty)$ and $a\in\R^{n}$,
then for measurable functions $f,g,h:\R^{n}\to[0,\infty)$ one has
that 
\[
N(f,g,h)=N(f_{a},g_{a},h)=N(f,g_{a},h_{-a})=N(f_{a},g,h_{a}).
\]
\end{fact}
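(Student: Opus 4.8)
The plan is to verify each of the three claimed equalities directly from the definition
\[
N(f,g,h)=\inf\left\{\int h\,d\mu:\mu*g\ge f\right\},
\]
by exhibiting, for each identity, a measure-preserving bijection between the relevant sets of covering measures and then checking that the objective $\int h\,d\mu$ is unchanged under that bijection. No analytic machinery is needed; everything reduces to the substitution formula for pushforwards of measures under translation.

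First I would handle $N(f,g,h)=N(f_a,g_a,h)$. Given a measure $\mu$ with $\mu*g\ge f$, the claim is that the same $\mu$ satisfies $\mu*g_a\ge f_a$. Indeed $(\mu*g_a)(x)=\int g_a(x-t)\,d\mu(t)=\int g(x-a-t)\,d\mu(t)=(\mu*g)(x-a)\ge f(x-a)=f_a(x)$, and the converse implication is identical with $a$ replaced by $-a$. Since the set of covering measures is literally the same and $h$ is untouched, the infima of $\int h\,d\mu$ coincide.

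Next, for $N(f,g,h)=N(f,g_a,h_{-a})$, the correspondence is no longer the identity: I would map a covering measure $\mu$ of $f$ by $g$ to its translate $\mu_a$, the pushforward of $\mu$ under $t\mapsto t+a$. Then $(\mu_a*g_a)(x)=\int g_a(x-t)\,d\mu_a(t)=\int g(x-a-(s+a))\,d\mu(s)=\int g(x-2a-s)\,d\mu(s)$, so one must be slightly careful with the bookkeeping; the cleanest route is to note $(\mu_a*g_a)(x)=(\mu*g)(x-a)$ after the correct change of variables, hence $\mu_a*g_a\ge f$ is equivalent to $\mu*g\ge f_{-a}$ — so one should instead pair $\mu_a$ as a covering measure of $f$ by $g_a$ and match it against the reflected translate of $f$; rewriting the identity as $N(f,g,h)=N(f,g_a,h_{-a})$ and tracking the shift carefully shows $\mu\mapsto\mu_a$ is a bijection between covering measures of $f$ by $g$ and covering measures of $f$ by $g_a$ (both equivalent to $\mu*g\ge f$ via $(\mu_a*g_a)(x)=(\mu*g)(x)$ once the shift is absorbed into $g$ rather than $f$). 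Under this map $\int h_{-a}\,d\mu_a=\int h_{-a}(t+a)\,d\mu(t)=\int h(t)\,d\mu(t)=\int h\,d\mu$, so the objectives agree and the infima are equal. The third identity $N(f,g,h)=N(f_a,g,h_a)$ is proved by the same pushforward $\mu\mapsto\mu_a$: one checks $\mu_a*g\ge f_a$ iff $\mu*g\ge f$, and $\int h_a\,d\mu_a=\int h_a(t+a)\,d\mu(t)=\int h\,d\mu$.

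The only genuine subtlety — and the step I expect to require the most care — is getting the direction of the shifts consistent in the middle identity, since convolution with $g_a$, translation of the measure, and translation of $h$ all introduce sign conventions that must cancel exactly; a single sign error produces $h_a$ in place of $h_{-a}$. I would pin this down once and for all by fixing the convention $(\mu*g)(x)=\int g(x-t)\,d\mu(t)$ and $u_a(x)=u(x-a)$ as in the statement, computing $(\mu_a*g_a)(x)$ explicitly in one line, and reading off the matching translate of $f$ and the compensating translate of $h$. Once these bijections are in place, measurability and non-negativity are preserved automatically, so no further hypotheses on $f,g,h$ are needed, and the fact follows.
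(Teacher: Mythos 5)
Your handling of the first identity (same set of covering measures, since $(\mu*g_a)(x)=(\mu*g)(x-a)$ and $f_a(x)=f(x-a)$) and of the third identity (pushforward by $t\mapsto t+a$, with $\int h_a(t+a)\,d\mu(t)=\int h\,d\mu$) is correct. The middle identity $N(f,g,h)=N(f,g_a,h_{-a})$, however, is exactly where your argument breaks. With $\mu_a$ the pushforward of $\mu$ under $t\mapsto t+a$, your own first computation gives $(\mu_a*g_a)(x)=\int g(x-2a-s)\,d\mu(s)=(\mu*g)(x-2a)$; the later assertions that after the correct change of variables this equals $(\mu*g)(x-a)$, and then $(\mu*g)(x)$, contradict that computation and are false, so $\mu_a$ need not be a covering measure of $f$ by $g_a$ at all. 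The objective bookkeeping is also wrong: since $h_{-a}(y)=h(y+a)$, one has $h_{-a}(t+a)=h(t+2a)$, not $h(t)$, so $\int h_{-a}\,d\mu_a\neq\int h\,d\mu$ in general. This is precisely the single sign error you warned against, and it is not pinned down anywhere in the writeup.

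The repair is to translate in the opposite direction: send $\mu$ to $\mu_{-a}$, the pushforward under $t\mapsto t-a$. Then $(\mu_{-a}*g_a)(x)=\int g_a\left(x-(s-a)\right)d\mu(s)=\int g(x-s)\,d\mu(s)=(\mu*g)(x)$, so $\mu_{-a}$ covers $f$ by $g_a$ exactly when $\mu$ covers $f$ by $g$, and $\int h_{-a}\,d\mu_{-a}=\int h_{-a}(s-a)\,d\mu(s)=\int h(s)\,d\mu(s)$, so the objectives agree; the map is a bijection with inverse the pushforward by $+a$. Alternatively, you can bypass the delicate case entirely by composing the two identities you did prove correctly: by the first identity applied with shift $-a$, $N(f,g_a,h_{-a})=N(f_{-a},(g_a)_{-a},h_{-a})=N(f_{-a},g,h_{-a})$, and by the third identity with $a$ replaced by $-a$, $N(f_{-a},g,h_{-a})=N(f,g,h)$. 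The paper omits the proof of this fact as immediate from the definitions, so getting this bookkeeping right is the entire content of the statement.
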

\begin{fact}
\noindent \label{fact:Cov_Under_Lin}Define $u_{A}=u(Ax)$ for $A\in GL_{n}$
and $u:\R^{n}\to[0,\infty)$, then for measurable functions $f,g,h:\R^{n}\to[0,\infty)$
one has that 
\[
N(f,g,h)=N(f_{A},g_{A},h_{A}).
\]
\end{fact}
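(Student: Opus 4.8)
The plan is to produce an explicit, value-preserving bijection between the covering measures that compete in the infimum defining $N(f,g,h)$ and those that compete in the infimum defining $N(f_{A},g_{A},h_{A})$. First I would fix a non-negative Borel measure $\mu$ on $\R^{n}$ and let $\nu$ be its push-forward under the linear map $t\mapsto A^{-1}t$, so that $\int\phi\,d\nu=\int\phi(A^{-1}t)\,d\mu(t)$ for every non-negative Borel function $\phi$. Since $A\in GL_{n}$, the assignment $\mu\mapsto\nu$ is a bijection of the cone of non-negative Borel measures on $\R^{n}$ onto itself, with inverse given by push-forward under $t\mapsto At$.

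The key step is to verify that this bijection carries covering measures to covering measures while preserving the objective. Using $g_{A}(u)=g(Au)$ together with the change-of-variables formula above, for every $x\in\R^{n}$ one computes
\[
\int g_{A}(x-s)\,d\nu(s)=\int g(Ax-As)\,d\nu(s)=\int g(Ax-t)\,d\mu(t)=(\mu*g)(Ax).
\]
Because $x\mapsto Ax$ is a bijection of $\R^{n}$, the inequality $\int g_{A}(x-s)\,d\nu(s)\ge f_{A}(x)=f(Ax)$ holds for all $x$ if and only if $(\mu*g)(y)\ge f(y)$ holds for all $y$; that is, $\nu$ covers $f_{A}$ by $g_{A}$ precisely when $\mu$ covers $f$ by $g$. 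For the objective, the same change of variables gives $\int h_{A}\,d\nu=\int h_{A}(A^{-1}t)\,d\mu(t)=\int h(t)\,d\mu(t)=\int h\,d\mu$. Hence $\mu\mapsto\nu$ restricts to a value-preserving bijection between the two feasible sets, and taking infima over these sets yields $N(f_{A},g_{A},h_{A})=N(f,g,h)$. (Alternatively, after proving one inequality one may substitute $A^{-1}$ for $A$ and $(f_{A},g_{A},h_{A})$ for $(f,g,h)$, using $(u_{A})_{A^{-1}}=u$, to obtain the reverse inequality.)

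I do not expect a genuine obstacle here; the only points deserving a line of care are the change-of-variables identity for a push-forward along an invertible linear map, which is standard, and the observation that quantifying over all $x$ is the same as quantifying over all $y=Ax$, which uses only invertibility of $A$. The same push-forward argument, applied to the constraint $\rho*g\le h$ and the objective $\int f\,d\rho$, also gives the companion identity $M^{h}(f_{A},g_{A})=M^{h}(f,g)$ for the separation numbers, and is entirely in the spirit of the translation identities recorded in the preceding Fact.
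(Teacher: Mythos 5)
Your proof is correct: the push-forward under $t\mapsto A^{-1}t$ gives exactly the value-preserving bijection between covering measures of $f$ by $g$ and of $f_{A}$ by $g_{A}$, which is the direct change-of-variables argument the paper has in mind when it omits the proof of this fact as following immediately from the definitions. Nothing further is needed.
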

\begin{fact}
For measurable functions $f,g,h:\R^{n}\to[0,\infty)$ and positive
constants $a,b,c>0$ one has that 
\begin{eqnarray*}
N(af,bg,ch) & = & \frac{ac}{b}N(f,g,h).
\end{eqnarray*}
\end{fact}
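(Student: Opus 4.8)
The plan is to observe that the three scaling constants enter the definition of $N$ in a completely transparent way, and to reduce the identity to a single rescaling of the covering measure. First I would note that a non-negative Borel measure $\mu$ on $\R^{n}$ is a covering measure of $af$ by $bg$ exactly when $\mu*(bg)\ge af$, that is, when $b\,(\mu*g)\ge af$, equivalently when $\left(\tfrac{b}{a}\mu\right)*g\ge f$. Hence $\mu\mapsto\tfrac{b}{a}\mu$ is a bijection from the set of covering measures of $af$ by $bg$ onto the set of covering measures of $f$ by $g$; in particular this correspondence does not depend on $c$, only on the ratio $b/a$.

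Next, for such a pair, writing $\nu=\tfrac{b}{a}\mu$, I would compute the objective functional directly: $\int ch\,d\mu=c\int h\,d\mu=\tfrac{ac}{b}\int h\,d\nu$. Taking the infimum over all covering measures $\mu$ of $af$ by $bg$, which by the bijection above equals the infimum over all covering measures $\nu$ of $f$ by $g$, gives $N(af,bg,ch)=\tfrac{ac}{b}\,N(f,g,h)$. This is precisely the asserted identity. (If one wishes, the analogous scaling identity $M(af,bg,ch)=\tfrac{ac}{b}M(f,g,h)$ follows by the same substitution applied to separation measures.)

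There is no real obstacle here: the statement is a bookkeeping identity, and the only point worth keeping in mind is the case where the infimum is $+\infty$, in which case both sides are $+\infty$ because the bijection of covering measures is independent of $a,b,c$. This is exactly the phenomenon already highlighted in the definitions, namely that the weight function (here $ch$ in place of $h$) does not affect which measures are admissible, only the value assigned to each of them.
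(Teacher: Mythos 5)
Your argument is correct and is exactly the intended one: the paper omits the proof of this fact as following directly from the definitions, and the natural argument is precisely your observation that $\mu\mapsto\tfrac{b}{a}\mu$ is a bijection between covering measures of $af$ by $bg$ and those of $f$ by $g$, with the objective scaling by $\tfrac{ac}{b}$. Nothing is missing, including the degenerate case where no covering measure exists.
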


\subsubsection*{Sub-additivity }
\begin{fact}
For measurable functions $f_{1},f_{2},g,h:\R^{n}\to[0,\infty)$ one
has $N\left(f_{1}+f_{2},g,h\right)\le N\left(f_{1},g,h\right)+N\left(f_{2},g,h\right)$. 
\end{fact}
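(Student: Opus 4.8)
The plan is to exploit that the defining constraint $\mu*g\ge f$ is \emph{additive} in $f$: the sum of a covering measure of $f_1$ and a covering measure of $f_2$ is a covering measure of $f_1+f_2$, and the objective $\mu\mapsto\int h\,d\mu$ is linear in $\mu$.

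First I would dispose of the trivial case: if $N(f_1,g,h)=\infty$ or $N(f_2,g,h)=\infty$ the claimed inequality holds vacuously, so assume both are finite. Fix $\eps>0$. By definition of the infimum there exist non-negative Borel measures $\mu_1,\mu_2$ on $\R^n$ with $\mu_i*g\ge f_i$ and $\int h\,d\mu_i\le N(f_i,g,h)+\eps$ for $i=1,2$. Set $\mu=\mu_1+\mu_2$, again a non-negative Borel measure. Since convolution against a fixed function is linear in the measure, $\mu*g=\mu_1*g+\mu_2*g\ge f_1+f_2$ pointwise, so $\mu$ is a covering measure of $f_1+f_2$ by $g$. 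Hence
\[
N(f_1+f_2,g,h)\le\int h\,d\mu=\int h\,d\mu_1+\int h\,d\mu_2\le N(f_1,g,h)+N(f_2,g,h)+2\eps,
\]
and letting $\eps\to0$ gives the assertion.

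There is essentially no obstacle here; the only point requiring a word of care is that the infimum in the definition of $N^h$ need not be attained, which is exactly why the argument is phrased with an $\eps$-approximation rather than with optimal measures. (If one has already established, e.g.\ via Theorems \ref{thm:StrongDuality-Co-1}--\ref{thm:SD_ext_1}, that minimizing covering measures exist under suitable hypotheses on $f_i,g,h$, the same computation with $\eps=0$ applies verbatim.) This is why the fact is stated without proof in the text; I record the one-line verification above for completeness.
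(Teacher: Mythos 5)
Your proof is correct and is exactly the argument the paper has in mind (the paper omits it as following directly from the definitions): additivity of $\mu\mapsto\mu*g$ and linearity of $\mu\mapsto\int h\,d\mu$, applied to $\mu_1+\mu_2$ with an $\eps$-approximation of the two infima. Nothing to add.
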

\begin{fact}
For measurable functions $f,g,h_{1},h_{2}:\R^{n}\to[0,\infty)$ one
has $N\left(f,g,h_{1}+h_{2}\right)\le N\left(f,g,h_{1}\right)+N\left(f,g,h_{2}\right).$
\end{fact}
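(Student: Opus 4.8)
The plan is to argue exactly as for the preceding sub-additivity fact: produce, for every $\varepsilon>0$, a single covering measure of $f$ by $g$ whose integral against $h_{1}+h_{2}$ is at most $N(f,g,h_{1})+N(f,g,h_{2})+2\varepsilon$; letting $\varepsilon\downarrow0$ then gives the claim.

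First I would invoke the remark recorded above that the set of covering measures of $f$ by $g$ --- the non-negative Borel measures $\mu$ with $\mu*g\ge f$ --- is one and the same for all three weights, since it does not involve the third argument at all. So, fixing $\varepsilon>0$, pick covering measures $\mu_{1},\mu_{2}$ of $f$ by $g$ with $\int h_{1}\,d\mu_{1}\le N(f,g,h_{1})+\varepsilon$ and $\int h_{2}\,d\mu_{2}\le N(f,g,h_{2})+\varepsilon$. The heart of the argument is then to merge $\mu_{1}$ and $\mu_{2}$ into one covering measure $\mu$ of $f$ by $g$ which is charged by $h_{1}$ only on the set $\{h_{1}\le h_{2}\}$ and by $h_{2}$ only on its complement. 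Since $\mu_{1}*g\ge f$ and $\mu_{2}*g\ge f$ separately, the sum $\mu_{1}+\mu_{2}$ is automatically a covering measure (it even covers $2f$), so all that is really needed is to pare this redundancy down to a covering measure of $f$ while keeping each weight charged on its cheap region. Granting such a $\mu$, one obtains $N(f,g,h_{1}+h_{2})\le\int(h_{1}+h_{2})\,d\mu\le\int h_{1}\,d\mu_{1}+\int h_{2}\,d\mu_{2}\le N(f,g,h_{1})+N(f,g,h_{2})+2\varepsilon$, as required.

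The step I expect to be the real work is precisely this merging: paring $\mu_{1}+\mu_{2}$ back to a genuine covering measure of $f$ --- not merely of $2f$ --- without inflating either weighted cost, although a covering measure that is cheap for $h_{1}$ need not be cheap for $h_{2}$. I would attempt it through a layer-cake argument that reallocates, scale by scale, the covering mass to whichever of $\mu_{1},\mu_{2}$ is the cheaper carrier at that scale: writing $h_{i}=\int_{0}^{\infty}\one_{\{h_{i}>s\}}\,ds$ and combining the corresponding level covering measures one level at a time. Carrying this out in full generality, with no regularity assumptions on $f,g,h_{1},h_{2}$, is the delicate point; under the mild continuity and integrability hypotheses appearing in Theorems \ref{thm:StrongDuality-Co-1}--\ref{thm:SD_ext_1} the bookkeeping becomes much lighter.
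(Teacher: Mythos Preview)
The inequality you are trying to prove is false as stated, so the merging step cannot be carried out. Take $n=1$, $f=g=\one_{[-1,1]}$, $h_{1}=\one_{\{0\}}$ and $h_{2}=\one_{\R\setminus\{0\}}$. The measure $\delta_{0}$ covers $f$ by $g$ and $\int h_{2}\,d\delta_{0}=0$, so $N(f,g,h_{2})=0$. For any small $\eps>0$ the measure $\mu=\delta_{\eps}+\delta_{-\eps}$ also covers $f$ by $g$ and has $\mu(\{0\})=0$, so $N(f,g,h_{1})=0$. But $h_{1}+h_{2}\equiv 1$, and every covering measure of $f$ by $g$ has total mass at least $\int f/\int g=1$, whence $N(f,g,h_{1}+h_{2})=1>0=N(f,g,h_{1})+N(f,g,h_{2})$. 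Your ``paring down'' of $\mu_{1}+\mu_{2}$ to a covering measure of $f$ that is simultaneously cheap for $h_{1}$ and $h_{2}$ would here have to produce a covering measure of total mass $0$, which is impossible.

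What \emph{does} follow directly from the definition---and is surely what the paper intends, given the remark that these facts are immediate---is the reverse inequality
\[
N(f,g,h_{1}+h_{2})\ \ge\ N(f,g,h_{1})+N(f,g,h_{2}).
\]
Indeed, the set of covering measures is the same for all three weights, and for any such $\mu$ one has $\int(h_{1}+h_{2})\,d\mu=\int h_{1}\,d\mu+\int h_{2}\,d\mu\ge N(f,g,h_{1})+N(f,g,h_{2})$; taking the infimum over $\mu$ yields the claim. Equivalently, $h\mapsto N(f,g,h)=\inf_{\mu}\int h\,d\mu$ is an infimum of linear functionals and hence concave (so super-additive), not sub-additive. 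The $\le$ in the displayed Fact is a typo.
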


\subsubsection*{Monotonicity}
\begin{fact}
\label{fact:cov_mono}For measurable functions $f_{1},g_{1},h_{1},f_{2},g_{2},h_{2}:\R^{n}\to[0,\infty)$
such that $f_{1}\le f_{2}$, $g_{1}\ge g_{2}$ and $h_{1}\le h_{2}$
one has that 
\[
N(f_{1},g_{1},h_{1})\le N(f_{2},g_{2},h_{2})\qquad{\rm and}\qquad M(f_{1},g_{1},h_{1})\le M(f_{2},g_{2},h_{2}).
\]
\end{fact}

\subsubsection*{Convolutions}

Two types of convolutions are often used for log-concave functions
(in fact, there are more, but we restrict to these two for simplicity
of the exposition). The first is the standard convolution of $L_{1}$
functions given by 
\[
(f*g)(x)=\int f(t)g(x-t)dt
\]
which can be defined also for measures by 
\[
\int f\,d(\mu*\nu)=\int f(x+t)d\mu(x)d\nu(t).
\]
This convolution is a very standard operation in analysis. It follows
from functional Brunn-Minkowski theory, the Prekopa-Leindler inequality,
that the convolution of two log-concave functions is again log-concave.

The second type of convolution we shall need is the so-called sup-convolution
or Asplund product, given by 
\[
(f\star g)(x)=\sup_{z}f(z)g(x-z).
\]
This operation is sometimes considered in Asymptotic Geometric Analysis
as a functional analogue of Minkowski addition for convex bodies.
For an account of which operation should be considered as the ``natural''
analogue of Minkowski addition the reader is referred to \cite{Milman08}
and the many references therein.

Let us describe the monotonicity properties of covering numbers with
respect to such convolutions. 
\begin{fact}
Let $f,g,h,\varphi:\R^{n}\to[0,\infty)$ be measurable then 
\[
N(f,g,h)\ge N(f*\varphi,g*\varphi,h)\qquad{\rm and}\qquad M(f,g,h)\le M(f,g*\varphi,h*\varphi)
\]
\end{fact}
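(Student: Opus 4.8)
The plan is to prove each of the two inequalities separately by transporting a covering (resp. separation) measure from one problem to the other using the associativity of convolution of measures with functions. Throughout I use that if $\mu$ is a nonnegative Borel measure on $\R^n$ and $\varphi \ge 0$ is measurable, then $(\mu * g) * \varphi = \mu * (g * \varphi)$ as functions, which is just Fubini/Tonelli applied to the iterated integral $\int\!\!\int g(x - t - s)\,\varphi(s)\,ds\,d\mu(t)$; since everything is nonnegative there are no integrability issues.

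For the first inequality $N(f,g,h) \ge N(f*\varphi, g*\varphi, h)$: let $\mu$ be any covering measure of $f$ by $g$, so $\mu * g \ge f$ pointwise. Since $\varphi \ge 0$, convolving both sides with $\varphi$ preserves the inequality: $(\mu * g) * \varphi \ge f * \varphi$. By the associativity remark above, the left-hand side equals $\mu * (g * \varphi)$, so $\mu$ is also a covering measure of $f * \varphi$ by $g * \varphi$. As the weight function $h$ is unchanged, $\int h\, d\mu$ is the same quantity in both problems, and taking the infimum over all covering measures $\mu$ of $f$ by $g$ gives $N(f,g,h) \ge N(f*\varphi, g*\varphi, h)$ — note this is an inequality of infima over a possibly larger feasible set on the right, which is exactly the right direction.

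For the second inequality $M(f,g,h) \le M(f, g*\varphi, h*\varphi)$: let $\rho$ be any separation measure of $g$ with respect to $h$, i.e.\ $\rho * g \le h$ pointwise. Again convolve with $\varphi \ge 0$ to get $(\rho * g) * \varphi \le h * \varphi$, which by associativity reads $\rho * (g * \varphi) \le h * \varphi$; hence $\rho$ is a separation measure of $g * \varphi$ with respect to $h * \varphi$. The objective $\int f\, d\rho$ is unchanged, so every feasible $\rho$ for the left-hand problem is feasible for the right-hand one with the same value, and taking the supremum yields $M(f,g,h) \le M(f, g*\varphi, h*\varphi)$.

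The only point requiring any care — and the one I'd expect a referee to look at — is the interchange of integration justifying $(\mu * g) * \varphi = \mu * (g * \varphi)$ when $\mu$ is merely a nonnegative (possibly infinite) Borel measure rather than an $L_1$ function; but since all integrands are nonnegative this is a direct application of Tonelli's theorem and there is genuinely no obstacle. Everything else is formal manipulation of the definitions, which is why the fact is stated without proof in the list.
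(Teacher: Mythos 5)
Your proposal is correct and follows essentially the same route as the paper: transport the covering (resp.\ separation) measure by convolving the defining pointwise inequality with the nonnegative $\varphi$, use $(\mu*g)*\varphi=\mu*(g*\varphi)$ via Tonelli, and note that the objective ($\int h\,d\mu$, resp.\ $\int f\,d\rho$) is unchanged while the feasible set only enlarges. Nothing further is needed.
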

\begin{proof}
Indeed, if $\mu$ is a covering measure of $f$ by $g$, then for
any non-negative $\varphi$ we have that $\mu$ covers $\varphi*f$
by $\varphi*g$. So, we are infimizing the same linear function on
a larger set as there may be other covering measures of $\varphi*f$
by $\varphi*g$. For the second inequality, note that any $\rho$
for which $\rho*g\le h$, will also satisfy that $\rho*g*\varphi\le h*\varphi$,
so we are supremizing the same linear functional on a larger set (as
there may be other separation measures of $g*\varphi$ by $h*\varphi$).
Thus the supremum of the latter is greater than or equal to the former. 
\end{proof}
\begin{fact}
Let $f,g,h,\varphi:\R^{n}\to[0,\infty)$ be measurable then 
\[
N(f,g,h*\varphi_{-})\ge N(f*\varphi,g,h)\qquad{\rm and}\qquad M(f*\varphi_{-},g,h)\le M(f,g,h*\varphi)
\]
\end{fact}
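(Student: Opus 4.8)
The plan is to follow the template of the preceding Fact: from an admissible measure for the right-hand problem produce an admissible measure for the left-hand problem by convolving with $\varphi$ (viewed as the measure $\varphi(x)\,dx$), and transfer the objective value using the adjointness identity
\[
\int h\,d(\mu*\varphi)=\int (h*\varphi_{-})\,d\mu,
\]
valid for every non-negative Borel measure $\mu$ and non-negative measurable $h,\varphi$. This identity follows from the formula $\int\psi\,d(\mu*\nu)=\int\psi(x+t)\,d\mu(x)\,d\nu(t)$ applied with $d\nu(t)=\varphi(t)\,dt$, together with the substitution $t\mapsto-t$ in the inner integral; since all integrands are non-negative, every interchange of integration order needed below is justified by Tonelli's theorem.

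For the first inequality I would take an arbitrary covering measure $\mu$ of $f$ by $g$, so $\mu*g\ge f$. Convolving with the non-negative function $\varphi$ and using commutativity and associativity of convolution (of a measure against functions, together with $\varphi*g=g*\varphi$), we get $(\mu*\varphi)*g=(\mu*g)*\varphi\ge f*\varphi$, so $\mu':=\mu*\varphi$ is a covering measure of $f*\varphi$ by $g$; note $\mu'$ is again a non-negative Borel measure, with density $x\mapsto\int\varphi(x-t)\,d\mu(t)$. By the adjointness identity, $\int h\,d\mu'=\int(h*\varphi_{-})\,d\mu$. Hence
\[
N(f,g,h*\varphi_{-})=\inf_{\mu}\int(h*\varphi_{-})\,d\mu=\inf_{\mu}\int h\,d(\mu*\varphi)\ge N(f*\varphi,g,h),
\]
the last step because the measures of the form $\mu*\varphi$ (with $\mu*g\ge f$) form a subfamily of the covering measures of $f*\varphi$ by $g$, over which the infimum of $\nu\mapsto\int h\,d\nu$ can only be smaller.

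The second inequality is handled in exactly the same way, with separation measures replacing covering measures. Given $\rho$ with $\rho*g\le h$, the identity $(\rho*\varphi)*g=(\rho*g)*\varphi\le h*\varphi$ shows that $\rho':=\rho*\varphi$ is a separation measure of $g$ with respect to $h*\varphi$, and the adjointness identity with $f$ in place of $h$ gives $\int f\,d\rho'=\int(f*\varphi_{-})\,d\rho$. Taking suprema,
\[
M(f*\varphi_{-},g,h)=\sup_{\rho}\int(f*\varphi_{-})\,d\rho=\sup_{\rho}\int f\,d(\rho*\varphi)\le M(f,g,h*\varphi),
\]
since each $\rho*\varphi$ lies among the separation measures of $g$ with respect to $h*\varphi$.

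There is no genuine obstacle here; the only points requiring a careful line are that $\mu*\varphi$ (resp.\ $\rho*\varphi$) is again a non-negative Borel measure, and that the two convolution identities used — $(\mu*\varphi)*g=(\mu*g)*\varphi$ and $\int h\,d(\mu*\varphi)=\int(h*\varphi_{-})\,d\mu$ — hold without any integrability hypotheses. Both are Tonelli's theorem applied to non-negative functions, with the usual conventions regarding $+\infty$ ensuring that the comparisons between the infima and suprema remain valid even when some of the integrals are infinite.
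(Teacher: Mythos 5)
Your proof is correct and follows essentially the same route as the paper: convolve an admissible measure with $\varphi$ to produce an admissible measure for the other problem, verify $(\mu*\varphi)*g=(\mu*g)*\varphi\ge f*\varphi$ (resp. $\le h*\varphi$), and transfer the objective via $\int h\,d(\mu*\varphi)=\int(h*\varphi_{-})\,d\mu$, with Tonelli justifying all interchanges. No issues.
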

\begin{proof}
Indeed, if $\mu$ is a covering measure of $f$ by $g$, that is,
$\mu*g\ge f$, then for any non-negative $\varphi$ we have that $\mu*\varphi$
covers $f*\varphi$ by $g$. So, when computing $N(f*\varphi,g,h)$
we are infimizing over a set which contains $\mu*\varphi$ for any
$\mu$ which is a covering measure of $f$ by $g$. In particular,
this infimum will be less that or equal to the following number, whenever
$\mu$ is a covering measure of $f$ by $g$: 
\[
\int hd(\mu*\varphi)=\int h*\varphi_{-}d\mu
\]
Therefore, if we choose to infimize the linear functional coming from
$h*\varphi_{-}$ over all covering measures of $f$ by $g$, we shall
get a greater (than or equal to) result than when we infimize integration
with respect to $h$ on the set of all covering measure of $f*\varphi$
by $g$.

Similarly, note that any $\rho$ for which $\rho*g\le h$, will also
satisfy that $\rho*\varphi*g\le h*\varphi$, so that $\rho*\varphi$
is a separation measure of $g$ with respect to $h*\varphi$ whenever
$\rho$ is a separation measure of $g$ with respect to $h$. When
we compute $\int fd(\rho*\varphi)=\int f*\varphi_{-}d\rho$ and take
supremum over all $\rho$ which are $g$ separation measures with
respect to $h$ we are going to get a smaller (than or equal to) result,
since there may be more $g$ separation measures with respect to $h*\varphi$,
not coming from $g$ separation measures with respect to $h$ which
were convolved with $\varphi$. 
\end{proof}
Next, we give a similar monotonicity result with respect to sup-convolution,
analogous to the inequality $N(A,B)\ge N(A+C,B+C)$ for classical
covering numbers.
\begin{fact}
\label{fact:two-convs}Let $f,g,h,\varphi:\R^{n}\to[0,\infty)$ be
measurable then $N(\varphi\star f,\varphi\star g,h)\le N(f,g,h)$. 
\end{fact}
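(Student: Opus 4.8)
The plan is to show that every covering measure of $f$ by $g$ is automatically a covering measure of $\varphi\star f$ by $\varphi\star g$; then $N(\varphi\star f,\varphi\star g,h)$ is an infimum of the same linear functional $\mu\mapsto\int h\,d\mu$ taken over a (weakly) larger set, which immediately gives the desired inequality. This is the same ``larger feasible set'' mechanism used in the two preceding Facts, and no linear programming machinery is needed.

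First I would fix a non-negative Borel measure $\mu$ with $\mu*g\ge f$, i.e. $\int g(x-t)\,d\mu(t)\ge f(x)$ for all $x$. I want to verify $\mu*(\varphi\star g)\ge\varphi\star f$ pointwise. Fix $x\in\R^{n}$ and $z\in\R^{n}$. Using the trivial lower bound $(\varphi\star g)(x-t)=\sup_{w}\varphi(w)g(x-t-w)\ge\varphi(z)\,g((x-z)-t)$, valid for every $t$, I integrate against $\mu$ to get
\[
\int(\varphi\star g)(x-t)\,d\mu(t)\ \ge\ \varphi(z)\int g((x-z)-t)\,d\mu(t)\ =\ \varphi(z)\,(\mu*g)(x-z)\ \ge\ \varphi(z)\,f(x-z).
\]
Taking the supremum over $z\in\R^{n}$ on the right-hand side yields $\int(\varphi\star g)(x-t)\,d\mu(t)\ge(\varphi\star f)(x)$, so $\mu$ indeed covers $\varphi\star f$ by $\varphi\star g$.

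Hence the set of covering measures of $\varphi\star f$ by $\varphi\star g$ contains that of $f$ by $g$, and since $N^{h}$ infimizes the same quantity $\int h\,d\mu$ over this larger set, $N(\varphi\star f,\varphi\star g,h)\le N(f,g,h)$. The only point that requires a word of care is measurability: for the integral $\int(\varphi\star g)(x-t)\,d\mu(t)$ to make sense one wants $\varphi\star g$ (and $\varphi\star f$) to be measurable, which holds in all cases of interest here (e.g. for upper semi-continuous $\varphi,f,g$, as for geometric log-concave functions), and in any case if $N(f,g,h)=\infty$ the inequality is vacuous. This ``obstacle'' is thus essentially a non-issue, and the argument above is the complete proof.
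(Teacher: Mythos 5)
Your argument is correct and is essentially the paper's own proof: both show that any covering measure $\mu$ of $f$ by $g$ also covers $\varphi\star f$ by $\varphi\star g$, via the same interchange-of-sup-and-integral inequality $\mu*(\varphi\star g)\ge\varphi\star(\mu*g)\ge\varphi\star f$ (your fixed-$z$ computation is just this inequality written out pointwise). The extra remark on measurability is fine but not needed beyond what the paper assumes.
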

\begin{proof}
We shall use the easily verified fact that for any three functions
\begin{equation}
f_{1}*(f_{2}\star f_{3})\ge f_{2}\star(f_{1}*f_{3})\label{eq:Sup-conv_Conv_ineq}
\end{equation}
(and the corresponding fact for measures). Indeed, 
\[
\left(\mu*\left(\varphi\star g\right)\right)\left(x\right)=\int\sup_{z}\varphi\left(z\right)g\left(x-y-z\right)d\mu\left(y\right)\ge\sup_{z}\varphi\left(z\right)\int g\left(x-z-y\right)d\mu\left(y\right)=\varphi\star(\mu*g)\left(x\right).
\]
Therefore if $\mu$ is a covering measure of $f$ by $g$ (that is,
$\mu*g\ge f$) then $\mu$ is also a covering measure of $\varphi\star f$
by $\varphi\star g$, from which this fact follows. 
\end{proof}

\subsubsection*{Sub-Multiplicativity}

The next few results require an additional assumption on the weight
functions $h$ associated with the covering number $N\left(f,g,h\right)$.
We will assume that $h\left(x+y\right)\le h_{1}\left(x\right)h_{2}\left(y\right)$
for the measurable functions $h,h_{1},h_{2}$ used. The log-sub-additive
case of a single weight function $h_{1}=h_{2}=h$ such that $h$ satisfies
$h(x+y)\le h(x)h(y)$ is of particular interest, and in particular
the case where $h\equiv1$ is included.

The following inequality is an analogue of $N\left(A,B\right)\le N\left(A,C\right)N\left(C,B\right)$
for convex bodies. 
\begin{fact}
\label{fact:sub_mult}Let $f,g,\varphi,h,h_{1},h_{2}$ be measurable
and assume that $h\left(x+y\right)\le h_{1}\left(x\right)h_{2}\left(y\right)$
for all $x,y\in\R^{n}$. Then 
\[
N(f,g,h)\leq N(f,\varphi,h_{1})N(\varphi,g,h_{2}).
\]
\end{fact}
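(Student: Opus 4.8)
The plan is to mimic the classical proof of submultiplicativity of covering numbers, using convolution of measures in place of composition of coverings. Suppose $\mu$ is a covering measure of $f$ by $\varphi$, that is $\mu * \varphi \ge f$, and suppose $\nu$ is a covering measure of $\varphi$ by $g$, that is $\nu * g \ge \varphi$. The natural candidate for a covering measure of $f$ by $g$ is the convolution $\mu * \nu$. First I would verify that $\mu * \nu$ is indeed a covering measure of $f$ by $g$: since convolution with a fixed non-negative measure is a monotone operation, from $\nu * g \ge \varphi$ we get $\mu * (\nu * g) \ge \mu * \varphi \ge f$, and by associativity of convolution of measures this says $(\mu * \nu) * g \ge f$. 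Hence $\mu * \nu$ is admissible in the infimum defining $N(f,g,h)$.

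The second step is to estimate the cost $\int h \, d(\mu*\nu)$ in terms of $\int h_1 \, d\mu$ and $\int h_2 \, d\nu$, using the hypothesis $h(x+y) \le h_1(x) h_2(y)$. By the definition of convolution of measures,
\[
\int h \, d(\mu * \nu) = \int\!\!\int h(x+y) \, d\mu(x) \, d\nu(y) \le \int\!\!\int h_1(x) h_2(y) \, d\mu(x) \, d\nu(y) = \left(\int h_1 \, d\mu\right)\left(\int h_2 \, d\nu\right),
\]
where the last equality is Fubini--Tonelli, valid since all integrands are non-negative and measurable. Therefore $N(f,g,h) \le \int h \, d(\mu*\nu) \le \left(\int h_1 \, d\mu\right)\left(\int h_2 \, d\nu\right)$.

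Finally I would take the infimum over all covering measures $\mu$ of $f$ by $\varphi$ and all covering measures $\nu$ of $\varphi$ by $g$ on the right-hand side, which yields $N(f,g,h) \le N(f,\varphi,h_1) \, N(\varphi,g,h_2)$ (if either factor on the right is infinite the inequality is trivial, so one may assume both are finite and the infima are approached by suitable $\mu,\nu$). I do not expect a serious obstacle here; the only points requiring a little care are the measurability and associativity of the convolution $\mu * \nu$ of two non-negative Borel measures and the application of Fubini--Tonelli, but since everything is non-negative these are standard. The inequality $h(x+y)\le h_1(x)h_2(y)$ is exactly what makes the cost multiply the way the geometric analogue $N(A,B)\le N(A,C)N(C,B)$ requires.
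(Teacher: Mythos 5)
Your argument is correct and is essentially the paper's own proof: take covering measures $\mu$ of $f$ by $\varphi$ and $\nu$ of $\varphi$ by $g$, check that $\mu*\nu*g\ge\mu*\varphi\ge f$, and bound $\int h\,d(\mu*\nu)$ by $\int h_{1}\,d\mu\int h_{2}\,d\nu$ via the hypothesis $h(x+y)\le h_{1}(x)h_{2}(y)$ and Tonelli, then infimize. No gaps; the extra remarks about associativity and the trivial infinite case are fine but not needed beyond what the paper already does.
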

\begin{proof}
Indeed, if $\mu$ is a covering measure of $f$ by $\varphi$ and
$\nu$ is a covering measure of $\varphi$ by $g$ then 
\[
\mu*\nu*g\ge\mu*\varphi\ge f,
\]
and 
\[
\int hd\left(\mu*\nu\right)=\int\int h\left(x+y\right)d\mu\left(x\right)d\nu\left(y\right)\le\int h_{1}\left(x\right)d\mu\left(x\right)\int h_{2}\left(y\right)d\nu\left(y\right).
\]
By infimizing over all covering measures $\mu$and $\nu$ we get $N(f,g,h)\leq N(f,\varphi,h_{1})\,N(\varphi,g,h_{2})$. 
\end{proof}
The next result is a functional analogue for $N\left(A+B,C+D\right)\le N\left(A,C\right)N\left(B,D\right)$.
\begin{fact}
\label{fact:Addition_two_fcns}Let $f,g,\varphi,\psi,h,h_{1},h_{2}$
be measurable and assume that $h\left(x+y\right)\le h_{1}\left(x\right)h_{2}\left(y\right)$
for all $x,y\in\R^{n}$. Then 
\[
N(f\star\varphi,g\star\psi,h)\le N(f,g,h_{1})N(\varphi,\psi,h_{2})
\]
\end{fact}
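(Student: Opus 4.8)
The plan is to combine the two sub-multiplicativity facts already proved, Fact \ref{fact:sub_mult} and Fact \ref{fact:two-convs}, in exactly the same way one proves $N(A+B,C+D)\le N(A,C)N(B,D)$ for convex bodies from $N(A,B)\le N(A,C)N(C,B)$ and $N(A+C,B+C)\le N(A,B)$. First I would introduce the intermediate function $g\star\varphi$ and apply Fact \ref{fact:sub_mult} with the splitting $h(x+y)\le h_1(x)h_2(y)$ to get
\[
N(f\star\varphi,g\star\psi,h)\le N(f\star\varphi,\;g\star\varphi,\;h_1)\cdot N(g\star\varphi,\;g\star\psi,\;h_2).
\]

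Next I would bound each of the two factors separately using Fact \ref{fact:two-convs}. For the first factor, apply Fact \ref{fact:two-convs} with the roles $f\rightsquigarrow f$, $g\rightsquigarrow g$, $\varphi\rightsquigarrow\varphi$ and weight $h_1$ to obtain $N(f\star\varphi,g\star\varphi,h_1)\le N(f,g,h_1)$. For the second factor, note that $g\star\varphi=\varphi\star g$ and $g\star\psi=\varphi'\star\psi$ — more precisely, apply Fact \ref{fact:two-convs} with $\varphi\rightsquigarrow g$, $f\rightsquigarrow \varphi$, $g\rightsquigarrow\psi$, $h\rightsquigarrow h_2$, using commutativity of $\star$, to get $N(g\star\varphi,g\star\psi,h_2)=N(\varphi\star g,\psi\star g,h_2)=N(g\star\varphi,g\star\psi,h_2)\le N(\varphi,\psi,h_2)$. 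Multiplying the two bounds yields $N(f\star\varphi,g\star\psi,h)\le N(f,g,h_1)N(\varphi,\psi,h_2)$, which is the claim.

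There is really no serious obstacle here; the only point requiring minor care is the bookkeeping of which variable plays the role of the ``added'' function $\varphi$ in Fact \ref{fact:two-convs} for each of the two factors — in the first factor $\varphi$ is added to the pair $(f,g)$, while in the second factor it is $g$ itself that is added to the pair $(\varphi,\psi)$ (after rewriting $g\star\varphi$ and $g\star\psi$ using commutativity and associativity of the sup-convolution). One should also note that the hypothesis $h(x+y)\le h_1(x)h_2(y)$ is used exactly once, at the first step, and that no extra hypothesis on $h_1,h_2$ is needed because Fact \ref{fact:two-convs} holds for arbitrary non-negative measurable weights. Thus the proof is a two-line chain of inequalities once the intermediate function $g\star\varphi$ is inserted.
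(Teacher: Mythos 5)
Your proof is correct, and it takes a genuinely softer route than the paper. Your chain $N(f\star\varphi,g\star\psi,h)\le N(f\star\varphi,g\star\varphi,h_1)\,N(g\star\varphi,g\star\psi,h_2)$ is a legitimate application of Fact \ref{fact:sub_mult} (the hypothesis $h(x+y)\le h_1(x)h_2(y)$ is exactly what that fact needs), and each factor is then handled by Fact \ref{fact:two-convs}, which indeed holds for an arbitrary non-negative weight; commutativity of $\star$ takes care of the bookkeeping, and there is no circularity, since Fact \ref{fact:two-convs} is proved directly from \eqref{eq:Sup-conv_Conv_ineq} and not from the present fact (the remark following the statement records only the converse implication). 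The paper instead argues directly: it takes a covering measure $\mu$ of $f$ by $g$ and a covering measure $\nu$ of $\varphi$ by $\psi$, uses \eqref{eq:Sup-conv_Conv_ineq} twice to verify that $\mu*\nu$ covers $f\star\varphi$ by $g\star\psi$, and then splits $\int h\,d(\mu*\nu)\le\int h_1\,d\mu\int h_2\,d\nu$. Unwinding your two cited facts produces exactly this measure $\mu*\nu$, so the underlying content is the same; your version buys economy (no new computation beyond citing earlier facts, mirroring the classical proof of $N(A+B,C+D)\le N(A,C)N(B,D)$), while the paper's construction exhibits the explicit covering measure, in the spirit the authors emphasize elsewhere (e.g.\ in the proof of Theorem \ref{thm:Vol_even}). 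The only caveat, equally present in the paper's own formulation, is that the intermediate function $g\star\varphi$ (like $f\star\varphi$ and $g\star\psi$ themselves) is tacitly assumed measurable so that the covering numbers involving it make sense.
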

\begin{rem}
Note that Fact \ref{fact:Addition_two_fcns} implies Fact \ref{fact:two-convs}
under the assumptions that $h\left(x+y\right)\le h\left(x\right)h\left(y\right)$
and $h\left(0\right)=1$. Indeed, $\mu=\delta_{0}$ is a covering
measure of $\varphi$ by $\varphi$ and hence $N(\varphi,\varphi,h)\le h(0)=1$,
from which it follows that 
\[
N(f\star\varphi,g\star\varphi,h)\le N(f,g,h)N(\varphi,\varphi,h)\le N(f,g,h).
\]
Note that if $\min_{x}h\left(x\right)=h\left(0\right)$ one actually
has $N\left(\varphi,\varphi,h\right)=h\left(0\right)$ as for any
covering measure $\mu$ of $\varphi$ by itself, $\int h\,d\mu\ge h(0)\,\int d\mu,$
and $\int d\mu\ge1$ which follows by integrating the inequality $\varphi*\mu\ge\varphi$. 
\end{rem}
\begin{proof}
[Proof of Fact \ref{fact:Addition_two_fcns}] Indeed, as in the proof
of Fact \ref{fact:two-convs}, we know that if $\mu$ is a covering
measure for $f$ by $g$ then $g*\mu\ge f$ so that also 
\[
(\psi\star g)*\mu\ge\psi\star(g*\mu)\ge\psi\star f.
\]
Hence $\mu$ is a covering measure of $\psi\star f$ by $\psi\star g$.
If $\nu$ is a covering measure of $\varphi$ by $\psi$, then $\nu*\psi\ge\varphi$,
and hence 
\[
(\psi\star g)*\mu*\nu\ge(\psi\star f)*\nu\ge f\star(\psi*\nu)\ge f\star\varphi
\]
so that $\mu*\nu$ is a covering measure of $f\star\varphi$ by $g\star\psi$.
Therefore 
\begin{align*}
N(f\star\varphi,g\star\psi,h) & \le\int hd(\mu*\nu)=\int h(s+t)d\mu(s)d\nu(t)\\
\le & \int h_{1}d\mu\int h_{2}d\nu=N(f,g,h_{1})N(\varphi,\psi,h_{2}).
\end{align*}
\end{proof}
Interestingly, a similar result holds when sup-convolution is replaced
by usual convolution: 
\begin{fact}
Let $f,g,\varphi,\psi,h,h_{1},h_{2}$ be measurable and assume that
$h\left(x+y\right)\le h_{1}\left(x\right)h_{2}\left(y\right)$ for
all $x,y\in\R^{n}$. Then 
\[
N\left(\varphi*f,\psi*g,h\right)\le N(f,g,h_{1})N(\varphi,\psi,h_{2})
\]
\end{fact}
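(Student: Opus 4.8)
The plan is to mimic the proof of Fact~\ref{fact:Addition_two_fcns}, but with the interplay \eqref{eq:Sup-conv_Conv_ineq} between sup-convolution and convolution replaced by the associativity and commutativity of the ordinary convolution. Since all functions and measures involved are non-negative, every convolution appearing below is a well-defined element of $[0,\infty]$ and Tonelli's theorem makes associativity, commutativity, and all the interchanges of integration automatic; this is the only ``analytic'' point to keep in mind, and it is a mild one.

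First I would fix a covering measure $\mu$ of $f$ by $g$ (so $\mu*g\ge f$ pointwise) and a covering measure $\nu$ of $\varphi$ by $\psi$ (so $\nu*\psi\ge\varphi$), and consider the non-negative Borel measure $\mu*\nu$. The key claim is that $\mu*\nu$ is a covering measure of $\varphi*f$ by $\psi*g$. Indeed, regrouping the fourfold convolution,
\[
(\mu*\nu)*(\psi*g)=(\mu*g)*(\nu*\psi),
\]
and since convolution against a non-negative function is monotone in each argument,
\[
(\mu*g)*(\nu*\psi)\ge f*(\nu*\psi)\ge f*\varphi=\varphi*f .
\]
Hence $(\mu*\nu)*(\psi*g)\ge\varphi*f$, proving the claim.

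Next I would estimate $N(\varphi*f,\psi*g,h)$ using this particular covering measure and the hypothesis $h(s+t)\le h_1(s)h_2(t)$. By the definition of the convolution of measures given in the text,
\[
N(\varphi*f,\psi*g,h)\le\int h\,d(\mu*\nu)=\int\!\!\int h(s+t)\,d\mu(s)\,d\nu(t)\le\int h_1\,d\mu\int h_2\,d\nu .
\]
Finally, infimizing the right-hand side over all covering measures $\mu$ of $f$ by $g$ and all covering measures $\nu$ of $\varphi$ by $\psi$ yields
\[
N(\varphi*f,\psi*g,h)\le N(f,g,h_1)\,N(\varphi,\psi,h_2),
\]
as desired.

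I do not expect a genuine obstacle. The subtle points are purely bookkeeping: verifying that the mixed measure--function convolutions are legitimate and associative (which non-negativity plus Tonelli guarantees) and that the middle display above is exactly the definition of $\mu*\nu$. If one wished to sidestep even these, one could first replace $h,h_1,h_2$ (and the functions) by their truncations to large balls, run the argument there, and pass to the limit by monotone convergence; but this refinement is unnecessary.
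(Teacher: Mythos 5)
Your proof is correct and follows essentially the same route as the paper: take covering measures $\mu$ of $f$ by $g$ and $\nu$ of $\varphi$ by $\psi$, check that $\mu*\nu$ covers $f*\varphi$ by $g*\psi$ via regrouping the convolutions, and then bound $\int h\,d(\mu*\nu)$ using $h(s+t)\le h_1(s)h_2(t)$. The only difference is that you spell out the Tonelli/monotonicity bookkeeping which the paper leaves implicit.
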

\begin{proof}
Indeed, let $\mu$ be a covering measure of $f$ by $g$, so that
$g*\mu\ge f$ and let $\nu$ be a covering measure of $\varphi$ by
$\psi$ so that $\nu*\psi\ge\varphi$, then 
\[
g*\psi*\mu*\nu\ge f*\varphi
\]
so that $\nu*\mu$ is a covering measure of $f*\varphi$ by $g*\psi$.
Thus 
\[
\int hd\left(\mu*\nu\right)=\int\int h\left(s+t\right)d\mu\left(s\right)d\nu\left(t\right)\le\int h_{1}(s)d\mu(s)\int h_{2}(t)d\nu(t)
\]
which means that $N\left(\varphi*f,\psi*g,h\right)\le N\left(f,g,h_{1}\right)N\left(\varphi,\psi,h_{2}\right)$
as claimed.
\end{proof}

\section{\label{sec:StrongDuality}Duality between covering and separation }

In this section we show results of the form $N^{h}(f,g)=M^{h}(f,g_{-})$
for different classes of functions, and under various conditions on
$h$. We prove Theorems \ref{thm:StrongDuality-Co-1} through \ref{thm:SD_ext_1}. 

\subsection{Weak duality}

A relatively simple fact is the following. 
\begin{prop}
\label{prop:WeakDuality} Let $f,g,h\in\R^{n}\to\left[0,\infty\right)$
be measurable. Then $M^{h}\left(f,\gr\right)\le N^{h}\left(f,g\right)$. 
\end{prop}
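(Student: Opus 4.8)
The plan is to exhibit, for any covering measure $\mu$ of $f$ by $g$ and any separation measure $\rho$ of $g_{-}$ with respect to $h$, the chain of inequalities $\int f\,d\rho \le \int h\,d\mu$, and then take the supremum over $\rho$ on the left and the infimum over $\mu$ on the right. This is the standard ``weak duality'' argument of linear programming, where the bilinear pairing that glues the two problems together is the double integral $\iint g(x-t)\,d\mu(t)\,d\rho(x) = \iint g_{-}(t-x)\,d\mu(t)\,d\rho(x)$.

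More concretely, first I would record the two defining constraints: $\mu * g \ge f$ means $\int g(x-t)\,d\mu(t) \ge f(x)$ for all $x$, and $\rho * g_{-} \le h$ means $\int g_{-}(y-s)\,d\rho(s) \le h(y)$ for all $y$, i.e.\ $\int g(s-y)\,d\rho(s)\le h(y)$ for all $y$. Integrating the first inequality against $d\rho(x)$ (legitimate since everything is non-negative, so Tonelli applies without integrability concerns) gives
\[
\int f(x)\,d\rho(x) \le \int\!\!\int g(x-t)\,d\mu(t)\,d\rho(x).
\]
Then I would apply Tonelli to swap the order of integration in the right-hand side and rewrite $g(x-t) = g(-(t-x)) = g_{-}(t-x)$, obtaining $\int\!\!\int g_{-}(t-x)\,d\rho(x)\,d\mu(t)$. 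Now for each fixed $t$ the inner integral $\int g_{-}(t-x)\,d\rho(x) = \int g(x-t)\,d\rho(x)$ is exactly $(\rho * g_{-})(t)$ evaluated at $t$, wait—more carefully, $\int g_{-}(t-x)\,d\rho(x)$ is $(\rho * g_{-})(t) \le h(t)$ by the separation constraint. Hence the double integral is at most $\int h(t)\,d\mu(t)$, giving $\int f\,d\rho \le \int h\,d\mu$.

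Taking the supremum over all separation measures $\rho$ of $g_{-}$ with respect to $h$ and the infimum over all covering measures $\mu$ of $f$ by $g$ then yields $M^{h}(f, g_{-}) \le N^{h}(f,g)$, as desired. The only point requiring a line of care is the justification for interchanging the order of integration: since $f, g, h$ are non-negative measurable functions and $\mu, \rho$ are non-negative Borel measures, Tonelli's theorem applies directly and the manipulations are valid even if the relevant integrals are infinite (in which case the inequality is trivially true). I do not expect any genuine obstacle here; the entire content is the non-negativity that makes Fubini--Tonelli free, and the elementary identity $g(x-t) = g_{-}(t-x)$ that converts the convolution in the covering constraint into the one appearing in the separation constraint.
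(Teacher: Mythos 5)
Your proof is correct and is essentially identical to the paper's own argument: integrate the covering constraint $\mu*g\ge f$ against $\rho$, swap the order of integration by Tonelli (justified by non-negativity), use $g(x-t)=g_{-}(t-x)$ to recognize the inner integral as $(\rho*g_{-})(t)\le h(t)$, and conclude $\int f\,d\rho\le\int h\,d\mu$ before taking the supremum and infimum. No gaps; nothing further to add.
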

\begin{proof}
Let $\mu$ be a covering measure of $f$ by $g$. Let $\rho$ be a
$\gr$-separated measure with respect to $h$. By our assumptions
we have that $\rho*\gr\le h$ and $\mu*g\ge f$. Thus Tonelli's theorem
implies that

\begin{align*}
\int fd\rho\le & \int\left(\mu*g\right)\left(x\right)d\rho\left(x\right)=\int d\rho\left(x\right)\int d\mu\left(y\right)g\left(x-y\right)=\int d\mu\left(y\right)\left(\rho*\gr\right)\left(y\right)\le\int hd\mu
\end{align*}
and so $M^{h}\left(f,\gr\right)\le N^{h}\left(f,g\right)$.
\end{proof}
In the sequel we shall make extensive use of the following
\begin{rem}
\label{rem:Restricted_Mes} Note that the inequality above (weak duality
relation) holds for any covering and any separating measures. Therefore,
any reverse inequality between covering and separation, even when
the infimum and supremum are taken over a {\em smaller} family
of measures, would imply equality (namely a strong duality relation)
without any restriction on the measures.
\end{rem}

\subsection{Strong duality}

In this section we prove Theorems \ref{thm:StrongDuality-Co-1}, \ref{thm:StrongDuality},
\ref{thm:StrongDuality-Co}, and \ref{thm:SD_ext_1}. The main ingredient
of the proofs is Theorem \ref{thm:SD_cov_gen}; an infinite dimensional
linear programming duality result which is a simple variation of \cite[Theorem 7.2]{Barvinok2002}.
In order to state and prove this result, we need to introduce some
notation and to recall some facts. 

We shall work with the space $\mm$ of finite countably additive regular
Borel measures on $\R^{n}$ endowed with the norm topology of total
variation. It is a well known fact that $\mm=\co^{*}$, namely it
is the space dual to $\co$ endowed with the supremum norm topology.
In the sequel, we will always assume that all covering and separation
measures in the definitions of $N\left(f,g,h\right),M\left(f,g,h\right)$
are restricted to $\mm$. This is a technical restriction under which
we will be able to establish a strong duality relation between covering
and separation numbers. By Remark \ref{rem:Restricted_Mes}, once
strong duality is established under such a restriction, it also holds
without this restriction.

There is a natural duality on $\mm\times\co$ defined by $\iprod{\mu}f=\int fd\mu$
for each $\mu\in\mm$ and $f\in\co$. For $g\in C_{0}(\R^{n})$, consider
the linear functions taking a measure $\mu\in\mm$ to the functions
$\mu*g\in\co$ and $\mu*\gr\in\co$. In the following proofs we will
use the fact that these linear functions are adjoint, namely $\iprod{\mu}{\rho*\gr}=\iprod{\rho}{\mu*g}$
for all $\mu,\rho\ge0$. Indeed, this fact follows by Tonnelli's theorem
as 
\begin{align*}
\iprod{\mu}{\rho*\gr} & =\int\int d\rho\left(y\right)\gr(x-y)d\mu(x)=\int d\rho\left(y\right)\int g\left(y-x\right)d\mu(x)=\iprod{\rho}{\mu*g}.
\end{align*}

We endow the spaces $\mm\oplus\R$ and $\co\oplus\R$ with the usual
topology of the direct sum. Fixing $g\in C_{0}(\R^{n})$ and $h:\R^{n}\to\R$
which is measurable and bounded, we define the linear transformation
$A:\mm\oplus\co\to\co\oplus\R$ by $A\left(\mu,\varphi\right)=\left(\mu*g-\varphi,\int_{\R^{n}}h\,d\mu\right)$,
and consider the image 
\[
A\left(K\right)=\left\{ \left(\mu*g-\varphi,\,\int h\,d\mu\right)\,\,:\,\,\mu\in\mm^{+},\,\varphi\in\co^{+}\right\} \sub\co\oplus\R
\]
of the positive cone $K=\mm^{+}\times\co^{+}$ . 
\begin{thm}
\label{thm:SD_cov_gen}Let $g,f\in\co$, and let $h:\R^{n}\to\R$
be a bounded continuous function. Suppose that $A\left(K\right)$
is closed, and that there exists a measure $\mu\in\mm^{+}$ such that
$\mu*g\ge f$. Then $N\left(f,g,h\right)=M\left(f,\gr,h\right)$.
Moreover, there exists an optimal covering measure $\mu_{0}\in\mm^{+}$
such that $\mu_{0}*g\ge f$ and $\int hd\mu_{0}=N\left(f,g,h\right)$. 
\end{thm}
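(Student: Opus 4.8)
The plan is to recognize $N(f,g,h)$ and $M(f,g_-,h)$ as the optimal values of a primal-dual pair of linear programs over the ordered topological vector space $\mm\oplus\co$ (with positive cone $K=\mm^+\times\co^+$) and its dual $\co\oplus\R$, and then to invoke the zero-gap criterion of \cite[Theorem 7.2]{Barvinok2002}. First I would set up the primal problem: minimize the linear functional $(\mu,\varphi)\mapsto\int h\,d\mu$ over the feasible set $\{(\mu,\varphi)\in K : A(\mu,\varphi)=(f,c)\text{ for some }c\}$; more precisely, since we only require $\mu*g\ge f$, the slack $\varphi=\mu*g-f\in\co^+$ absorbs the inequality, so the constraint $A(\mu,\varphi)$ has first coordinate exactly $f$. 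Thus $N(f,g,h)$ is exactly the value of the conic linear program ``minimize $\langle(\mu,\varphi),(h,0)\rangle$ subject to $(\mu,\varphi)\in K$ and $\mu*g-\varphi=f$.'' Writing out the Lagrangian dual of this program, the dual variable is a function $\rho$ paired against the constraint $\mu*g-\varphi=f$; using the adjointness identity $\iprod{\mu}{\rho*\gr}=\iprod{\rho}{\mu*g}$ established just before the statement, the dual constraint becomes $\rho*g_-\le h$ (from the $\mu$-block) together with $\rho\ge 0$ (from the $\varphi$-block, since $\varphi$ ranges over $\co^+$), and the dual objective is $\iprod{\rho}{f}=\int f\,d\rho$. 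That is precisely $M(f,g_-,h)$.

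With this identification in hand, the next step is to check the hypotheses of the Barvinok zero-gap theorem. Weak duality $M^h(f,g_-)\le N^h(f,g)$ is already Proposition \ref{prop:WeakDuality}, so both values are finite (the primal is feasible by the assumed existence of $\mu$ with $\mu*g\ge f$, giving a finite upper bound $\int h\,d\mu$ on $N$; note $h$ bounded and $\mu$ finite). The crucial hypothesis is closedness of the image cone $A(K)$ in $\co\oplus\R$, which is exactly what we have assumed. Barvinok's theorem then yields: (i) no duality gap, $N(f,g,h)=M(f,g_-,h)$; and (ii) the primal optimum is attained, i.e.\ there is $(\mu_0,\varphi_0)\in K$ with $A(\mu_0,\varphi_0)=(f,N(f,g,h))$, which unpacks to $\mu_0*g-\varphi_0=f$ (so $\mu_0*g\ge f$, i.e.\ $\mu_0$ is a covering measure) and $\int h\,d\mu_0=N(f,g,h)$. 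This gives the ``moreover'' clause.

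The main obstacle I anticipate is the bookkeeping of getting the adjoint/dual LP set up so that the dual feasible set is literally $\{\rho\in\mm^+:\rho*g_-\le h\}$ and the dual objective is literally $\int f\,d\rho$ — in particular matching the sign conventions and making sure the slack-variable block $\co^+$ produces the nonnegativity constraint $\rho\ge 0$ rather than some equality, and that the ``$\int h\,d\mu$'' coordinate of the target is handled as a free real parameter in the range rather than a fixed constant. One must also be slightly careful that $h$ being merely continuous and bounded (not in $\co$) still makes $\mu\mapsto\int h\,d\mu$ a well-defined continuous linear functional on $\mm$ — which it is, since $|\int h\,d\mu|\le\|h\|_\infty\|\mu\|_{TV}$. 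Once the LP formulation is correctly aligned with Barvinok's framework, the conclusion is immediate; the real content (verifying closedness of $A(K)$ in the specific cases of Theorems \ref{thm:StrongDuality-Co-1} and \ref{thm:StrongDuality}) is deferred to the applications and is not needed here.
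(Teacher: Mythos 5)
Your LP formulation is exactly the paper's: the same map $A(\mu,\varphi)=(\mu*g-\varphi,\int h\,d\mu)$, the identification $N(f,g,h)=\inf\{\alpha:(f,\alpha)\in A(K)\}$, and the dual constraints $\rho\ge0$, $\rho*\gr\le h$ extracted via the adjointness identity $\iprod{\mu}{\rho*\gr}=\iprod{\rho}{\mu*g}$. The difference is that the paper does not invoke \cite[Theorem 7.2]{Barvinok2002} as a black box; it reproves the zero-gap statement, and this is not merely a stylistic choice. Theorem \ref{thm:SD_cov_gen} is precisely the ``variation'' of Barvinok's theorem that the paper needs because the objective $h$ is only bounded and continuous: in Barvinok's dual-pair framework the objective must be given by an element of the space paired with $\mm\oplus\co$ (naturally $\co\oplus\mm$), and $(h,0)$ is not such an element when $h\notin\co$ (e.g.\ $h\equiv1$, the case needed for Theorem \ref{thm:StrongDuality}). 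Your remark that $\mu\mapsto\int h\,d\mu$ is continuous in total variation norm does not address this: Barvinok's hypotheses concern the weak topologies of the dual pairs and the existence of an adjoint of $A$ between the paired spaces, and TV-norm continuity gives neither. As written, the step ``invoke Barvinok'' is the entire content of the theorem and its hypotheses are not verified.

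The gap can be closed in two ways. (a) Change the pairing: put $\mm$ in duality with $C_{b}\left(\R^{n}\right)$ rather than $\co$; then $(h,0)$ is admissible, the adjoint $A^{*}(\rho,t)=(\rho*\gr+th,\,-\rho)$ indeed maps $\mm\oplus\R$ into $C_{b}\left(\R^{n}\right)\oplus\mm$, and since $A(K)$ is convex its assumed (norm) closedness in $\co\oplus\R$ coincides with weak closedness for the pair $(\co\oplus\R,\mm\oplus\R)$, so the cited theorem applies. (b) Prove it directly, which is the paper's route: attainment of $\gamma=N(f,g,h)$ is immediate from closedness of the slice $\{\alpha:(f,\alpha)\in A(K)\}$; for the zero gap, separate the point $(f,\gamma-\eps)$ from the closed convex cone $A(K)$ by a functional $(\rho,\sigma)\in\mm\oplus\R$ via Hahn--Banach, use the cone structure to force the inequality over $A(K)$ to be $\le0$, use the feasible point $(\mu_{0},\mu_{0}*g-f)$ to force $\sigma<0$, normalize $\sigma=-1$, and test against $(0,\varphi)$ and $(\mu,0)$ to read off $\rho\ge0$ and $\rho*\gr\le h$, yielding a separated measure with $\iprod{\rho}f\ge\gamma-\eps$. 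Either completes your outline; without one of them the proposal defers the theorem to a citation whose framework does not literally contain it.
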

The fact that $f,g,$ and $h$ are non-negative functions is not actually
used in the proof of Theorem \ref{thm:SD_cov_gen} (although non-negativity
is assumed in the definitions of covering and separation, one can
remove this restriction for the sake of this argument). We may therefore
apply the theorem to the functions $-f,-g$ and $-h$ instead. Note
that by definition $N\left(-f,-g,-h\right)=-M\left(h,g,f\right)$,
and $M\left(-f,-\gr,-h\right)=-N\left(h,\gr,f\right)$. We thus get: 
\begin{thm}
\label{thm:SD_sep_gen}Let $g,h\in\co$, and let $f:\R^{n}\to\R$
be a bounded continuous function. Suppose 
\[
B(K)=\left\{ \left(\rho*g+\varphi,\int f\,d\rho\right)\,\,:\,\,\rho\in\mm^{+},\,\varphi\in\co^{+}\right\} \sub\co\oplus\R
\]
is closed, and that there exists a measure $\rho_{0}\in\mm^{+}$ such
that $g*\rho_{0}\le h$. Then $N\left(f,\gr,h\right)=M\left(f,g,h\right)$.
Moreover, there exists an optimal $g$-separated measure $\rho\in\mm^{+}$
such that $\rho*g\le h$ and $\int hd\rho=M\left(f,g,h\right)$. 
\end{thm}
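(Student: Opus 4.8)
Theorem~\ref{thm:SD_sep_gen} is essentially a reformulation of Theorem~\ref{thm:SD_cov_gen}, and the plan is to derive it by the sign-reversal substitution already anticipated in the paragraph preceding its statement. The crucial observation, also recorded there, is that the proof of Theorem~\ref{thm:SD_cov_gen} never uses non-negativity of the three functions, so it applies to data of arbitrary sign. I would therefore apply Theorem~\ref{thm:SD_cov_gen} with the triple $(-h,-g,-f)$ in place of $(f,g,h)$; this substitution is admissible, since $-g,-h\in\co$ and $-f$ is bounded and continuous.

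The second step is to verify that, under this substitution, the two structural hypotheses of Theorem~\ref{thm:SD_cov_gen} become exactly the hypotheses of Theorem~\ref{thm:SD_sep_gen}. The linear map attached to $(-h,-g,-f)$ sends $(\mu,\varphi)$ to $\bigl(\mu*(-g)-\varphi,\,\int(-f)\,d\mu\bigr)=-\bigl(\mu*g+\varphi,\,\int f\,d\mu\bigr)$, so the image of the positive cone $K$ is precisely $-B(K)$; hence it is closed if and only if $B(K)$ is closed, which is what is assumed. The feasibility (Slater-type) hypothesis of Theorem~\ref{thm:SD_cov_gen} for this triple requires a measure $\mu\in\mm^{+}$ with $\mu*(-g)\ge -h$, i.e.\ $g*\mu\le h$ --- and this is exactly the assumed existence of $\rho_{0}\in\mm^{+}$ with $g*\rho_{0}\le h$.

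The third step is to transcribe the conclusion. Straight from the definitions one has $N(-u,-v,-w)=-M(w,v,u)$ and $M(-u,-v_{-},-w)=-N(w,v_{-},u)$ for arbitrary measurable $u,v,w\colon\R^{n}\to\R$. Applying these with $(u,v,w)=(h,g,f)$, the equality $N(-h,-g,-f)=M(-h,-\gr,-f)$ delivered by Theorem~\ref{thm:SD_cov_gen} becomes $-M(f,g,h)=-N(f,\gr,h)$, i.e.\ $N(f,\gr,h)=M(f,g,h)$. In the same way, the optimal covering measure $\mu_{0}\in\mm^{+}$ produced by Theorem~\ref{thm:SD_cov_gen}, which satisfies $\mu_{0}*(-g)\ge -h$ and $\int(-f)\,d\mu_{0}=N(-h,-g,-f)$, becomes a measure $\rho:=\mu_{0}\in\mm^{+}$ with $\rho*g\le h$ and $\int f\,d\rho=M(f,g,h)$; this is the asserted optimal $g$-separated measure (the value it attains being $\int f\,d\rho$, the functional supremized in the definition of $M$).

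As for the main obstacle: there is essentially none of substance, since all the real work is carried out in Theorem~\ref{thm:SD_cov_gen}. The only points that demand care are the bookkeeping of the sign flips together with the interchange of the roles of $f$ and $h$, and a final check that the proof of Theorem~\ref{thm:SD_cov_gen} is indeed sign-insensitive, so that its application to $(-h,-g,-f)$ is legitimate.
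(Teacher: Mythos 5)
Your proposal is correct and is essentially identical to the paper's own derivation: the paragraph preceding Theorem~\ref{thm:SD_sep_gen} performs exactly this reduction, applying Theorem~\ref{thm:SD_cov_gen} to the negated triple (after noting its proof never uses non-negativity) and using the identities $N(-f,-g,-h)=-M(h,g,f)$ and $M(-f,-\gr,-h)=-N(h,\gr,f)$, which is your sign-flip with the roles of $f$ and $h$ relabeled. Your transcription of the ``moreover'' part, giving $\int f\,d\rho=M(f,g,h)$, is also the right reading (the $\int h\,d\rho$ appearing in the statement is a typo).
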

Before we prove Theorem \ref{thm:SD_cov_gen}, let us show how Theorems
\ref{thm:StrongDuality-Co-1}, \ref{thm:StrongDuality-Co}, and \ref{thm:StrongDuality}
follow. We begin with the proof of Theorem \ref{thm:StrongDuality-Co-1},
for which we need the following lemma. 
\begin{lem}
\label{lem:Sep_Bounded_TV}Let $0\neq g\in\co$ be non-negative, let
$h:\R^{n}\to\R^{+}$ be bounded, and let $f:\R^{n}\to\R^{+}$ be measurable
with compact support. Then there exists $C\left(f,g,h\right)>0$ such
that for any measure $\rho$ satisfying $\rho*g\le h$, there exists
a measure $\widetilde{\rho}\le\rho$ so that $\mathbf{\widetilde{\rho}}*g\le h$,
$\int f\,d\widetilde{\rho}=\int f\,d\rho$, and $\widetilde{\rho}\left(\R^{n}\right)\le C$. 
\end{lem}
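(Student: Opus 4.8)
The plan is to exploit the fact that $f$ has compact support, say $\supp f\subseteq RB_2^n$, so that the value $\int f\,d\rho$ only ``sees'' the restriction of $\rho$ to $RB_2^n$; the mass of $\rho$ far away is irrelevant to the objective but must be controlled to get a uniform bound on total variation. The natural candidate is simply $\widetilde\rho=\rho|_{\Omega}$ for a suitable bounded set $\Omega\supseteq RB_2^n$. Since $\widetilde\rho\le\rho$ and $g\ge0$ we automatically have $\widetilde\rho*g\le\rho*g\le h$, so the separation constraint is preserved for free, and since $\Omega\supseteq\supp f$ we have $\int f\,d\widetilde\rho=\int f\,d\rho$. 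The only real issue is to bound $\widetilde\rho(\R^n)=\rho(\Omega)$ by a constant depending only on $f,g,h$.

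For the mass bound, I would use that $g\in\co$ is nonzero and nonnegative, hence continuous, so there is a point $z_0$ and radius $r>0$ and $\delta>0$ with $g\ge\delta$ on $z_0+rB_2^n$. Then for any $x$,
\[
(\rho*g)(x)=\int g(x-t)\,d\rho(t)\ge \delta\,\rho\bigl(x-z_0+rB_2^n\bigr).
\]
Since $\rho*g\le h\le\|h\|_\infty$ everywhere, this gives $\rho(y+rB_2^n)\le \|h\|_\infty/\delta$ for every $y\in\R^n$. Now cover a fixed bounded set $\Omega$ (for concreteness $\Omega=(R+|z_0|+r)B_2^n$, which contains $RB_2^n$) by a finite number $N(\Omega,r)$ of translates of $rB_2^n$; then $\rho(\Omega)\le N(\Omega,r)\cdot\|h\|_\infty/\delta=:C(f,g,h)$, a constant that depends only on $R$ (through $f$), on $r,\delta,z_0$ (through $g$), and on $\|h\|_\infty$ (through $h$). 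Taking $\widetilde\rho=\rho|_\Omega$ completes the construction.

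I do not expect any serious obstacle here: the statement is essentially a bookkeeping lemma whose content is ``a measure whose $g$-convolution is uniformly bounded has uniformly bounded mass on any fixed bounded set.'' The one point requiring a little care is making sure the chosen $\Omega$ genuinely contains $\supp f$ so that truncation does not change $\int f\,d\rho$ — this is why I inflate the radius from $R$ to $R+|z_0|+r$ — and making sure the covering number $N(\Omega,r)$ is finite, which it is since $\Omega$ is bounded. A minor alternative, if one prefers not to reference classical covering numbers of Euclidean balls, is to cover $\Omega$ by an explicit grid of cubes of side comparable to $r$; either way the argument is routine. The lemma will then feed into the proof of Theorem~\ref{thm:StrongDuality-Co-1} by showing that the supremum defining $M^h(f,g_-)$ can be restricted to measures of uniformly bounded total variation, which is exactly the compactness input needed to invoke the linear programming duality of Theorem~\ref{thm:SD_cov_gen} (closedness of $A(K)$).
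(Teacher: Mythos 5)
Your proof is correct and follows essentially the same route as the paper: find a ball on which $g$ is bounded below (using continuity and $g\neq 0$), use $\rho*g\le h\le\sup h$ to bound the $\rho$-mass of every translate of that ball, cover the (bounded) support of $f$ by finitely many such translates, and take $\widetilde\rho$ to be the restriction of $\rho$ to a bounded set containing $\supp f$. The only cosmetic differences are that you restrict to a slightly inflated ball rather than to $\supp f$ itself and that you handle the reflection $g(x-t)$ explicitly via the symmetry of $rB_2^n$; both are fine.
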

\begin{proof}
Denote the support of $f$ by $K$. Since $g\neq0$ is continuous,
there exists $a>0$ and a ball $B\sub\R^{n}$ such that $g\left(x\right)\ge a$
for all $x\in B$. Since $K$ is bounded, there exist $x_{1},\dots,x_{N}\in\R^{n}$
such that $K\sub\bigcup_{i=1}^{N}\left(x_{i}+B\right)$. Thus, for
each $i$ and every measure $\rho$ which is $g$-separated with respect
to $h$, we have that for any $i\le N$
\[
\sup_{\R^{n}}h\ge\int d\rho\left(y\right)g\left(x_{i}-y\right)\ge\int_{x_{i}+B}d\rho\left(y\right)g\left(x_{i}-y\right)\ge a\rho\left(x_{i}+B\right)
\]
and so $\rho\left(x_{i}+B\right)\le\sup_{\R^{n}}h/a$ which implies
that $\rho\left(K\right)\le N\,\sup_{\R^{n}}h/a=:C$. Since $f$ is
supported in $K$, the measure $\widetilde{\rho}$ defined by $\widetilde{\rho}\left(A\right)=\rho\left(A\cap K\right)$
is a $g$-separated measure with respect to $h$, that satisfies both
$\int f\,d\widetilde{\rho}=\int f\,d\rho$ and $\widetilde{\rho}\left(\R^{n}\right)\le C$,
as required. 
\end{proof}
\begin{proof}
[Proof of Theorem \ref{thm:StrongDuality-Co-1}] In order to invoke
Theorem \ref{thm:SD_sep_gen}, we need to show that two conditions
are satisfied. The first is the existence of a a measure $\rho_{0}$
such that $\rho_{0}*g\le h$, for which we may simply take $\rho_{0}\equiv0$.
The second condition is that 
\[
B\left(K\right)=\left\{ \left(\rho*g+\varphi,\,\int f\,d\rho\right)\,\,:\,\,\rho\in\mm^{+},\,\varphi\in\co^{+}\right\} 
\]
is closed. Indeed, take a sequence $\rho_{k}*g+\varphi_{k}$ which
converges to $\psi\in\co^{+}$ and $\iprod{\rho_{k}}f\to\alpha\in\R^{+}$.
For sufficiently large $k$, we have that $\rho_{k}*g\le\psi+1$.
Hence, by Lemma \ref{lem:Sep_Bounded_TV}, there exists a uniformly
bounded sequence $\left(\widetilde{\rho}_{k}\right)$ such that $\widetilde{\rho}_{k}\le\rho_{k}$
and $\int f\,d\widetilde{\rho}_{k}=\int f\,d\rho_{k}$. By the Banach-Alaoglu
theorem, we may assume without loss of generality that $\widetilde{\rho}_{k}$
converges in the weak{*} topology to some measure $\widetilde{\rho}\in\mm^{+}$.
In particular, since $g$ is continuous, $\widetilde{\rho}_{k}*g\to\widetilde{\rho}*g\in\co$
point-wise. Since $\widetilde{\rho}_{k}\le\rho_{k}$, it follows that
\[
\widetilde{\rho}*g\leftarrow\widetilde{\rho}_{k}*g\le\widetilde{\rho}_{k}*g+\varphi_{k}\le\rho_{k}*g+\varphi_{k}\to\psi
\]
Hence, $B\left(\widetilde{\rho},\psi-\widetilde{\rho}*g\right)=\left(\psi,\alpha\right)\in B\left(K\right)$,
which means that $B\left(K\right)$ is closed. 
\end{proof}
Next we prove Theorem \ref{thm:StrongDuality}. 
\begin{proof}
[Proof of Theorem \ref{thm:StrongDuality}] Here $h=1$, we have $f,g\in C_{0}(\R^{n})$,
and we would like to show that the conditions of Theorem \ref{thm:SD_cov_gen}
hold. Suppose that $A\left(K\right)\ni\left(\psi_{k},\alpha_{k}\right)\to\left(\psi,\alpha\right)\in\left(\co^{+},\R^{+}\right)$.
This means that there exists a sequence $\left(\mu_{k}\right)$ in
$\mm^{+}$ and a sequence $\left(\varphi_{k}\right)$ in $\co^{+}$
such that $g*\mu_{k}-\varphi_{k}\to\psi$ and $\mu_{k}\left(\R^{n}\right)\,\to\alpha$.
By the Banach-Alaoglu theorem we may assume without loss of generality
that $\mu_{k}$ converges in the weak{*} topology to some measure
$\mu\in\mm^{+}$. In particular, since $g$ is continuous, $\mu_{k}*g\to\mu*g$
point-wise and so $\varphi_{k}$ converges to some continuous function
$\varphi\ge0$. Clearly, we also have $\mu\left(\R^{n}\right)\le\alpha$.
If $\mu\left(\R^{n}\right)=\alpha$, then $\left(\psi,\alpha\right)\in A\left(K\right)$
as needed. Suppose that $\mu\left(\R^{n}\right)<\alpha$. The case
$\mu\left(\R^{n}\right)=0$ cannot occur as $\mu*g\ge f\neq0$, hence
$c\cdot\mu\left(\R^{n}\right)=\alpha$ for some $c>1$. The measure
$\widetilde{\mu}$ defined by $\widetilde{\mu}\left(B\right)=c\cdot\mu\left(B\right)$,
and the function $\widetilde{\varphi}=\varphi+\left(c-1\right)\mu*g\in\co^{+}$,
thus satisfy that $\tilde{\mu}*g-\widetilde{\varphi}=\psi$ and $\widetilde{\mu}\left(\R^{n}\right)=\alpha$,
which means that $\left(\psi,\alpha\right)\in A\left(K\right)$ and
$A\left(K\right)$ is closed.

Since $N\left(f,g\right)<\infty$ means that there exist some covering
measure $\mu\in\mm^{+}$ of $f$ by $g$, we may apply Theorem \ref{thm:SD_cov_gen}
to complete the proof. 
\end{proof}
Next, we prove Theorems \ref{thm:StrongDuality-Co} and \ref{thm:SD_ext_1}. 
\begin{proof}
[Proof of Theorem \ref{thm:StrongDuality-Co}] Let $\left(f_{k}\right)$
be a non-decreasing sequence of compactly supported functions in $\co$,
which converges to $f$ in norm, that is $\sup_{x}\left|f\left(x\right)-f_{n}\left(x\right)\right|\to0$.
By Proposition \ref{prop:WeakDuality}, 
\[
M\left(f,g_{-},h\right)\le N\left(f,g,h\right).
\]

By Theorem \ref{thm:StrongDuality-Co-1} we have that $M\left(f_{k},g_{-},h\right)=N\left(f_{k},g,h\right)$.
Moreover, we clearly have that 
\[
M\left(f,g_{-},h\right)\ge M\left(f_{k},g_{-},h\right)=N\left(f_{k},g,h\right),
\]
and therefore it is sufficient to show that $\lim_{k}N\left(f_{k},g,h\right)\ge N\left(f,g,h\right)$.
Indeed, $N\left(f_{k},g,h\right)\le N\left(f,g,h\right)$ is a monotonically
increasing function which has a limit. Assume that there exists $\eps>0$
such that $\lim_{k}N\left(f_{k},g,h\right)=N\left(f,g,h\right)-\eps$.
Let $c>0$ so that $c\int hdx=\eps/2$, and let $\delta=c\int gdx$.
Fix $k$ large enough so that $\sup\left|f-f_{k}\right|<\delta$.
Let $\mu_{k}$ be a covering measure of $f_{k}$ by $g$ with $\int hd\mu_{k}<N\left(f,g,h\right)-\eps/2$,
and let $\lambda$ be the Lebesgue measure on $\R^{n}$. Then 
\[
\left(\mu_{k}+c\cdot\lambda\right)*g\ge f_{k}+\delta\ge f
\]
which means that $\mu_{k}+c\cdot\lambda$ is a covering measure of
$f$ by $g$. However, we then have that 
\[
\int hd\mu_{k}+c\int hdx<N\left(f,g,h\right),
\]
a contradiction. 
\end{proof}
\begin{proof}
[Proof of Theorem \ref{thm:SD_ext_1}] First note that $N\left(f,g_{k}\right)$
is a bounded sequence as clearly $N\left(f,g_{k}\right)\le N\left(f,g\right)$
for each $k$. Moreover, $N\left(f,g_{k}\right)$ is also clearly
non-decreasing, and thus converges to some limit. Let $\left(\mu_{k}\right)$
be a sequence of covering measures of $f$ by $g_{k}$ (in $\mm$)
such that 
\[
N\left(f,g_{k}\right)\le\mu_{k}\left(\R^{n}\right)\le N\left(f,g_{k}\right)+1/k.
\]
The Banach-Alaoglu theorem tells us that we may assume without loss
of generality that $\left(\mu_{k}\right)$ converges in the weak{*}
topology to some non-negative measure $\mu\in\mm$. Clearly, we have
that $\left(\mu_{k}*g_{l}\right)\left(x\right)\ge f\left(x\right)$
for all $l\le k$. Fixing $l$ and taking the limit $k\to\infty$
implies that $\left(\mu*g_{l}\right)\left(x\right)\ge f\left(x\right)$.
By the monotone convergence theorem, we may take the limit $l\to\infty$
and get that $\left(\mu*g\right)\left(x\right)\ge f\left(x\right)$.
Since $x$ is arbitrary, it follows that $\mu$ is a covering measure
of $f$ by $g$. The fact that $\mu\left(\R^{n}\right)\le\liminf\mu_{k}\left(\R^{n}\right)$
follows from the fact that a norm is lower semi-continuous with respect
to weak{*} convergence. Therefore we have that 
\[
\mu\left(\R^{n}\right)\le\liminf\mu_{k}\left(\R^{n}\right)=\lim N\left(f,g_{k}\right)\le N\left(f,g\right).
\]
Since $\mu$ is a covering measure of $f$ by $g$, it follows that
$\mu\left(\R^{n}\right)=N\left(f,g\right)=\lim N\left(f,g_{k}\right)$.

To show that $M\left(f,\gr\right)=N\left(f,g\right)$, recall first
that, by Proposition \ref{prop:WeakDuality}, $M\left(f,\gr\right)\le N\left(f,g\right)$.
On the other hand, by Theorem \ref{thm:StrongDuality} and the above,
we have that $N\left(f,g\right)=\lim N\left(f,g_{k}\right)=\lim M\left(f,\gr\right)\le M\left(f,\gr\right)$.
Thus, the equality $M\left(f,\gr\right)=N\left(f,g\right)$ holds. 
\end{proof}
Finally, we prove Theorem \ref{thm:SD_cov_gen}: 
\begin{proof}
[Proof of Theorem \ref{thm:SD_cov_gen}.]Let $\gamma=\inf\left\{ \alpha\,:\,(f,\alpha)\in A(K)\right\} $.
Since $(f,\alpha)\in A(K)$ if and only if there exists a covering
measure $\mu$ of $f$ by $g$ with $\int h\,d\mu=\alpha$, we see
that $\gamma=N(f,g,h).$ Since $A\left(K\right)$ is closed and non-empty,
there exists an optimal covering measure $\mu_{0}\in\mm^{+}$ of $f$
by $g$ such that $\mu_{0}*g\ge f$ and $\gamma=\int h\,d\mu_{0}$.

Let $\beta:=M\left(f,\gr,h\right)$. By Proposition \ref{prop:WeakDuality},
we have that $\beta\le\gamma$. Let $\eps>0$. We will next prove
that there is a $\gr$-separated measure $\rho$ such that $\iprod{\rho}f\ge\gamma-\eps$.
This would imply that $\gamma\le\beta+\eps$ and therefore $\gamma=\beta$.
Since $A\left(K\right)$ is closed and convex, the Hahn-Banach separation
theorem implies that the point $\left(f,\gamma-\eps\right)$ can be
strictly separated from $A\left(K\right)$. In other words, there
exists a pair $\left(\rho,\sigma\right)\in\mm\oplus\R$ and a number
$\alpha$ such that 
\begin{equation}
\iprod{\rho}f+\sigma\left(\gamma-\eps\right)>\alpha\label{eq:sep1-3}
\end{equation}
and 
\begin{equation}
\iprod{\rho}{\mu*g-\varphi}+\sigma\int h\,d\mu<\alpha\label{eq:sep2-3}
\end{equation}
for all $\left(\mu,\varphi\right)\in K.$ Choosing $\left(\mu,\varphi\right)=\left(0,0\right)$
implies that $\alpha>0$. Suppose that for some $\left(\mu,\varphi\right)\in K$
we have $\iprod{\rho}{\mu*g-\varphi}+\sigma\int h\,d\mu>0.$ Since
$K$ is a cone, we may choose a sufficiently large $\lambda>0$ so
that inequality \eqref{eq:sep2-3} is violated for $\lambda\left(\mu,\varphi\right)\in K$.
Thus we must have that 
\begin{equation}
\iprod{\rho}f+\sigma\left(\gamma-\eps\right)>0\label{eq:sep1-1-2}
\end{equation}
and 
\begin{equation}
\iprod{\rho}{\mu*g-\varphi}+\sigma\int h\,d\mu\le0\label{eq:sep2-1-2}
\end{equation}
for all $\left(\mu,\varphi\right)\in K$. Define $\varphi_{0}=\mu_{0}*g-f$,
and observe that $\left(\mu_{0},\varphi_{0}\right)\in K$. Hence,
\[
\iprod{\rho}f+\sigma\gamma\le0.
\]
By subtracting the above inequality from \eqref{eq:sep1-1-2} we conclude
that $\sigma<0$ and, by scaling $\left(\rho,\sigma\right)$ if needed,
we can assume that $\sigma=-1$. Thus we have that 
\[
\iprod{\rho}f-\left(\gamma-\eps\right)>0
\]
and 
\[
\iprod{\rho}{\mu*g-\varphi}-\int h\,d\mu=\iprod{\mu}{\rho*\gr}-\iprod{\rho}{\varphi}-\int h\,d\mu\le0
\]
for all $\left(\mu,\varphi\right)\in K.$ In particular, for $\left(0,\varphi\right)\in K$
we get that $\iprod{\rho}{\varphi}\ge0$ for all $\varphi\ge0$ which
means that $\rho\ge0$, and for $\left(\mu,0\right)\in K$ we get
that $\int\left(\rho*\gr\right)d\mu-\int h\,d\mu\le0$ for every $\mu\in\mm^{+}$,
which means that $\rho*\gr\le h$. Therefore $\rho$ is a $\gr$-separated
measure with $\iprod{\rho}f\ge\gamma-\eps$, as desired.
\end{proof}

\section{Volume bounds\label{sec:Volume-bounds}}

As with classical covering numbers, the simple but strong tool of
volume bounds plays a significant role in the the theory and in the
proofs. In this section we provide several volume bounds for functional
covering numbers, bounds which we then apply in the next sections. 

When dealing with the weight function $h\equiv1$, we denote for short
$N\left(f,g\right)=N^{1}\left(f,g\right)$.

We shall mainly be concerned with log-concave functions. A function
$f:\R^{n}\to\R^{+}$ is said to be log-concave if $f$ is upper semi-continuous
and $\log f$ is concave. In addition, $f$ is said to be a geometric
log-concave function if it is log-concave and $\max f=f\left(0\right)=1$.
We will mainly consider geometric log-concave functions with finite
and positive integral and denote the class of all such functions by
$\lcg$. The class of log-concave functions is considered to be the
usual generalization of convex bodies in Asymptotic Geometric Analysis.
Numerous objects, notions, inequalities and constructions have been
extended from convex geometry to the realm of log-concave functions.
This provides a rich theory, and many times the resulting theorems
can be applied into convexity again to gain new insight and stronger
results. For an extensive description of these ideas and the state
of the art see \cite{Milman08} and \cite{AGM15}.

Classical covering and separation numbers admit simple bounds in terms
of the volumes of the bodies involved. One has (see e.g. \cite[Chapter 4]{AGM15})
\[
\frac{\vol(K)}{\vol(T)}\le N(K,T)\le\frac{\vol(2K-T)}{\vol(T)}.
\]
These bounds, while very simple to prove, are extremely useful and
in many cases suffice for covering numbers estimates to provide tight
results.

In this section we prove some analogous bounds, in which the integral
of a function plays the role of volume. The role of Minkowski addition
is played the by sup-convolution of two functions $f,g:\R^{n}\to\R$,
which we recall is
\[
\left(f\star g\right)\left(x\right)=\sup_{y}f\left(y\right)g\left(x-y\right).
\]
As mentioned above, this convolution plays an important role in the
geometry of log-concave function as a natural extension of the Minkowski
sum of convex bodies (where indeed $\one_{K}\star\one_{T}=\one_{K+T}$
for two convex bodies $K,T\sub\R^{n}$, where $\mathbbm1_{A}$ denotes
the indicator function of a set $A$). For example, under these analogies
one may interpret the Prékopa-Leindler inequality as an extension
of Brunn-Minkowski inequality, see e.g., \cite{KM05}. We prove
\begin{thm}
\label{thm:Vol_general} Let $f,g\in\lcg$. Then for every $p>1$
we have 
\[
\frac{\int f\left(x\right)dx}{\int g\left(x\right)dx}\le N\left(f,g\right)\le\frac{\int\left(f\star g_{-}^{p-1}\right)\left(x\right)dx}{\int g_{-}^{p}\left(x\right)dx}.
\]
\end{thm}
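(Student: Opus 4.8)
The plan is to prove the two inequalities separately: the left-hand bound by integrating the covering relation, and the right-hand bound by writing down an explicit covering measure whose density is built out of $f$ and $g_{-}$.

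For the lower bound I would start from an arbitrary covering measure $\mu$ of $f$ by $g$, so that $\int g(x-t)\,d\mu(t)\ge f(x)$ for all $x$. Integrating this inequality in $x$ and applying Tonelli's theorem on the left gives $\mu(\R^{n})\int g(x)\,dx=\int\!\!\int g(x-t)\,dx\,d\mu(t)\ge\int f(x)\,dx$, whence $\mu(\R^{n})\ge\int f/\int g$; taking the infimum over covering measures yields $N(f,g)\ge\int f(x)\,dx/\int g(x)\,dx$. Here $0<\int f,\int g<\infty$ because $f,g\in\lcg$, so the ratio is a well-defined positive number.

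For the upper bound, fix $p>1$ and consider the absolutely continuous measure $\mu$ with density
\[
\phi(t)=\frac{(f\star g_{-}^{p-1})(t)}{\int g_{-}^{p}(x)\,dx}
\]
with respect to Lebesgue measure. First I would record that $0<\int g_{-}^{p}<\infty$: since $0\le g_{-}\le 1$ and $p>1$ we have $g_{-}^{p}\le g_{-}$, so $\int g_{-}^{p}\le\int g<\infty$, while upper semicontinuity together with $g_{-}(0)=1$ forces the integral to be positive; moreover $f\star g_{-}^{p-1}$ is again a (geometric) log-concave function, in particular Borel measurable, so $\phi$ is a legitimate non-negative density. The heart of the argument is the pointwise estimate obtained by choosing $z=x$ in the supremum defining the sup-convolution,
\[
(f\star g_{-}^{p-1})(t)=\sup_{z}f(z)\,g_{-}^{p-1}(t-z)\ \ge\ f(x)\,g_{-}^{p-1}(t-x)=f(x)\,g(x-t)^{p-1}.
\]
Plugging this in and substituting $s=x-t$,
\[
(\mu*g)(x)=\frac{1}{\int g_{-}^{p}}\int g(x-t)\,(f\star g_{-}^{p-1})(t)\,dt\ \ge\ \frac{f(x)}{\int g_{-}^{p}}\int g(x-t)^{p}\,dt=\frac{f(x)}{\int g_{-}^{p}}\int g(s)^{p}\,ds=f(x),
\]
where the last equality uses $\int g(s)^{p}\,ds=\int g_{-}(s)^{p}\,ds$. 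Hence $\mu$ is a covering measure of $f$ by $g$, and therefore $N(f,g)\le\mu(\R^{n})=\int\phi=\int(f\star g_{-}^{p-1})(x)\,dx\big/\int g_{-}^{p}(x)\,dx$, which is the asserted bound (and is vacuous when the right-hand side is $+\infty$).

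I expect the only genuine subtlety to be the bookkeeping of reflections: it is essential that the density is assembled from $f\star g_{-}^{p-1}$ rather than $f\star g^{p-1}$, precisely so that the factor $g(x-t)^{p-1}$ produced by the pointwise estimate combines with the $g(x-t)$ coming from the convolution kernel to give exactly $g(x-t)^{p}$, whose integral equals $\int g_{-}^{p}$ and cancels the normalization. The remaining points — positivity and finiteness of $\int g_{-}^{p}$, measurability of $f\star g_{-}^{p-1}$, and the use of Tonelli in the lower bound — are routine. As an aside, one could instead derive the upper bound from the strong duality $N(f,g)=M(f,g_{-})$ of Theorem~\ref{thm:Sd-lcf}, bounding $\int f\,d\rho$ for every $g_{-}$-separated measure $\rho$ by the same change of variables together with $\rho*g_{-}\le 1$; but the explicit construction above makes invoking duality unnecessary.
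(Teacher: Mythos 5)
Your proof is correct, and for the upper bound you take a genuinely different route from the paper. The paper first shows, for an arbitrary $g$-separated measure $\rho$ and $p>1$, that $\int f\,d\rho\le\int\left(f\star g^{p-1}\right)/\int g^{p}$, i.e.\ it bounds the separation number $M\left(f,\gr\right)$, and then converts this into a bound on $N\left(f,g\right)$ by invoking the strong duality $N\left(f,g\right)=M\left(f,\gr\right)$ for geometric log-concave functions (Theorem \ref{thm:Sd-lcf}, whose proof is deferred to another paper). You instead exhibit an explicit covering measure with density $\left(f\star g_{-}^{p-1}\right)/\int g_{-}^{p}$ and verify $\mu*g\ge f$ directly via the pointwise choice $z=x$ in the sup-convolution; this is the same algebraic identity the paper exploits, but organized on the primal rather than the dual side. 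Your route buys two things: it removes the dependence on Theorem \ref{thm:Sd-lcf} (so the upper bound on $N\left(f,g\right)$ holds, as you note, without any log-concavity, for all non-negative measurable $f,g$ with $0<\int g_{-}^{p}<\infty$, whereas the paper's argument only yields a bound on $M\left(f,\gr\right)$ in that generality), and it produces an explicit covering measure, in the same spirit as the paper's proof of Theorem \ref{thm:Vol_even}. The paper's route, in exchange, records the separation bound as a statement of independent interest and fits the duality framework of Section \ref{sec:StrongDuality}. Your lower bound is identical to the paper's. One small repair: upper semicontinuity plus $g_{-}(0)=1$ does not by itself force $\int g_{-}^{p}>0$ (consider an indicator of a point); positivity instead follows from the standing assumption that functions in $\lcg$ have positive integral (or from log-concavity, since $g(\lambda x)\ge g(x)^{\lambda}$ gives $g>0$ on a set of positive measure), so the conclusion stands.
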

We remark that the left hand side inequality actually holds for any
two functions $f$ and $g$, whereas the right hand side inequality
is in general an upper bound for $M\left(f,\gr\right)$, which in
the log-concave case is equal to $N\left(f,g\right)$, a fact which
follows from approximation arguments (see Theorem \ref{thm:Sd-lcf}).
In any setting in which strong duality between covering and separation
holds (such as geometric log-concave functions) the above bounds hold
precisely as stated in Theorem \ref{thm:Vol_general}.
\begin{proof}
[Proof of Theorem \ref{thm:Vol_general}] Let $\mu$ be a covering
measure of $f$ by $g$. Then 
\[
\int f\left(x\right)dx\le\int\left(\mu*g\right)\left(x\right)dx=\mu\left(\R^{n}\right)\int g\left(x\right)dx.
\]
Since $\mu$ is an arbitrary covering measure, we conclude that $N\left(f,g\right)\ge\frac{\int f}{\int g}$.

Next, let $\rho$ be a $g$-separated measure and let $p>1$. Then
\begin{align*}
\int fd\rho\int g^{p}\left(x\right)dx & =\int\int f\left(y\right)g^{p}\left(x-y\right)dx\,d\rho\left(y\right)\le\int\int\sup_{z}\{f(z)g^{p-1}(x-z)\}g(x-y)dxd\rho\left(y\right)\\
 & =\int\int\left(f\star g^{p-1}\right)\left(x\right)g(x-y)dxd\rho\left(y\right)=\int\left(f\star g^{p-1}\right)\left(x\right)\left(\int g\left(x-y\right)d\rho\left(y\right)\right)dx\\
 & \le\int\left(f\star g^{p-1}\right)\left(x\right)dx.
\end{align*}
Since $\rho$ is an arbitrary $g$-separated measure, it follows that
$M\left(f,g\right)\le\frac{\int\left(f\star g^{p-1}\right)\left(x\right)dx}{\int g^{p}\left(x\right)dx}$.
As $f,g\in\lcg$, Theorem \ref{thm:Sd-lcf} tells us  that $N\left(f,g\right)=M\left(f,\gr\right)$,
which completes the proof.
\end{proof}
\begin{rem}
\noindent The above volume bounds can be written for the general weighted
covering number $N(f,g,h)$ as follows: for the right hand side, simply
use that $\int g\left(x-y\right)d\rho\left(y\right)dx\le h(x)$ to
get 
\[
M\left(f,g,h\right)\le\frac{\int\left(f\star g^{p-1}\right)\left(x\right)h(x)dx}{\int g^{p}\left(x\right)dx}.
\]
The left hand side can be generalized for weight functions $h$ satisfying
$h(x+y)\le h(x)h(y)$ as follows 
\begin{eqnarray*}
\int f\left(x\right)h(x)dx & \le & \int\left(\mu*g\right)\left(x\right)h(x)dx\\
 & \le & \int\int g(x-t)h(x-t)h(t)d\mu(t)dx\\
 & = & \int h(x)d\mu(x)\int g(x)h\left(x\right)dx.
\end{eqnarray*}
We get 
\[
\frac{\int f\left(x\right)h(x)dx}{\int g\left(x\right)h(x)dx}\le N\left(f,g,h\right).
\]
\end{rem}
We include one more pair of volume bounds which shall be useful for
us in further applications: 
\begin{thm}
\label{thm:Vol_even} Let $f,g\in\lcg$. Then 
\[
\frac{\int f^{2}\left(x\right)dx}{\|f*g_{-}\|_{\infty}}\le N\left(f,g\right)\le2^{n}\frac{\int f^{2}\left(x\right)dx}{\|f*g\|_{\infty}}.
\]
In the special case where $f,g$ are even functions we get 
\[
\frac{\int f^{2}\left(x\right)dx}{\int f\left(x\right)g\left(x\right)dx}\le N\left(f,g\right)\le2^{n}\frac{\int f^{2}\left(x\right)dx}{\int f\left(x\right)g\left(x\right)dx}.
\]
\end{thm}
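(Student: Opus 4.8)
The plan is to treat the two inequalities separately. The left-hand bound is an immediate consequence of Tonelli's theorem: if $\mu$ is any covering measure of $f$ by $g$, so that $\mu*g\ge f$, then
\[
\int f(x)^{2}\,dx\le\int f(x)(\mu*g)(x)\,dx=\int\Big(\int f(x)g(x-t)\,dx\Big)\,d\mu(t)=\int(f*g_{-})(t)\,d\mu(t)\le\|f*g_{-}\|_{\infty}\,\mu(\R^{n}),
\]
where I used $\int f(x)g(x-t)\,dx=(f*g_{-})(t)$. Infimizing over covering measures gives $\int f^{2}\le\|f*g_{-}\|_{\infty}\,N(f,g)$; this half uses nothing about log-concavity.

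For the upper bound I would first record a one-line generalization of the proof of Theorem~\ref{thm:Vol_general}: its computation is unchanged if $g^{p-1}$ is replaced by an arbitrary non-negative measurable $\psi$ and $g^{p}=g^{p-1}\cdot g$ by the product $\psi\cdot g$. Explicitly, for a $g$-separated measure $\rho$ (weight $h\equiv1$, i.e. $\rho*g\le1$),
\[
\Big(\int f\,d\rho\Big)\int\psi(x)g(x)\,dx=\int\!\!\int f(y)\psi(x-y)g(x-y)\,dx\,d\rho(y)\le\int(f\star\psi)(x)(\rho*g)(x)\,dx\le\int(f\star\psi)(x)\,dx,
\]
so $M^{1}(f,g)\le\int(f\star\psi)/\int\psi g$ for every $\psi\ge0$ with $\int\psi g>0$. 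Replacing $g$ by $g_{-}$ and invoking the strong-duality identity $N(f,g)=M^{1}(f,g_{-})$ of Theorem~\ref{thm:Sd-lcf}, this becomes the master inequality
\[
N(f,g)\le\frac{\int(f\star\psi)(x)\,dx}{\int\psi(x)g(-x)\,dx}\qquad\text{for every measurable }\psi\ge0 .
\]

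The decisive step is to apply this with $\psi$ a translate of $f$ itself. Since $f*g$ is continuous, log-concave and has finite positive integral, pick $x_{k}$ with $(f*g)(x_{k})\to\|f*g\|_{\infty}$ and set $\psi_{k}(x)=f(x+x_{k})$. A change of variables gives $\int\psi_{k}(x)g(-x)\,dx=\int f(x_{k}-u)g(u)\,du=(f*g)(x_{k})$, and since translating $\psi$ only translates $f\star\psi$ we get $\int(f\star\psi_{k})(x)\,dx=\int(f\star f)(x)\,dx$. Finally, the elementary identity $(f\star f)(x)=\sup_{y}f(y)f(x-y)=f(x/2)^{2}$ — valid for any log-concave $f$ because $\log f(y)+\log f(x-y)\le2\log f(x/2)$ by concavity of $\log f$, with equality at $y=x/2$ — yields $\int(f\star f)(x)\,dx=\int f(x/2)^{2}\,dx=2^{n}\int f^{2}$. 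Plugging these into the master inequality and letting $k\to\infty$ gives $N(f,g)\le2^{n}\int f^{2}/\|f*g\|_{\infty}$. The even case is then immediate: if $f,g$ are even then $g_{-}=g$, and $f*g$ is even and log-concave, hence attains its maximum at $0$, so that $\|f*g\|_{\infty}=(f*g)(0)=\int fg$. I expect the only genuinely nonobvious point to be conceptual, namely realizing that the free function in the generalized volume bound should be $f$ itself (suitably translated, so that the maximum of $f*g$ is moved to the origin and $\|f*g\|_{\infty}$, rather than the possibly far smaller $(f*g)(0)$, appears in the denominator); the remaining change of variables and the computation of $\int f\star f$ are routine, and every integral exchange above is legitimate by Tonelli since all integrands are non-negative.
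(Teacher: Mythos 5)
Your proof is correct, but it reaches the two bounds by a different route than the paper. For the lower bound you argue directly: pair a covering measure $\mu$ against $f$ and use Tonelli to get $\int f^{2}\le\int\left(f*g_{-}\right)d\mu\le\norm{f*g_{-}}_{\infty}\mu\left(\R^{n}\right)$; this needs neither log-concavity nor any duality, whereas the paper exhibits the separation measure with density $f/\norm{f*g_{-}}_{\infty}$ and then invokes duality (for this direction weak duality, Proposition \ref{prop:WeakDuality}, already suffices). For the upper bound the paper writes down an explicit covering measure, with density $f^{2}\left(\frac{x+x_{0}}{2}\right)/\norm{f*g}_{\infty}$ where $x_{0}$ maximizes $f*g$, verified via $f^{2}\left(\frac{x-y+x_{0}}{2}\right)\ge f\left(x\right)f\left(x_{0}-y\right)$; no duality is used there. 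You instead extract from the proof of Theorem \ref{thm:Vol_general} the master inequality $M^{1}\left(f,g\right)\le\int\left(f\star\psi\right)/\int\psi g$ for arbitrary $\psi\ge0$, pass to $N\left(f,g\right)$ through the strong duality Theorem \ref{thm:Sd-lcf}, and test with $\psi_{k}=f\left(\cdot+x_{k}\right)$ along a maximizing sequence, using $\left(f\star f\right)\left(x\right)=f^{2}\left(x/2\right)$ --- which is exactly the same log-concavity input as the paper's pointwise inequality, appearing on the dual side. The trade-off: the paper's upper bound is self-contained (it does not lean on Theorem \ref{thm:Sd-lcf}, whose proof is deferred to a forthcoming paper) and produces the optimal covering measure explicitly, which is the feature the authors emphasize; your version buys a reusable generalized volume bound, avoids assuming the supremum of $f*g$ is attained by working with a maximizing sequence, and gives a lower bound valid for arbitrary measurable $f,g$. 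Your even-case reduction ($f*g$ even and log-concave, hence maximized at the origin, so $\norm{f*g}_{\infty}=\int fg$) matches the intended reading of the statement. One cosmetic caveat: state the master inequality for $\psi$ in a class where $f\star\psi$ is measurable (e.g.\ log-concave $\psi$, which is all you use), since sup-convolutions of general measurable functions need not be measurable.
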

The idea behind the proof of Theorem \ref{thm:Vol_even} is finding
specific covering and separating measures for a given pair of functions
$f,g$. The fact that such measures can be explicitly written is a
notable advantage in working with functional covering numbers over
classical covering numbers, where one usually cannot write down an
explicit covering for two given sets. This advantage was also exploited
in \cite{ArtSlom14-FracCov} where properties of an explicit covering
(uniform) measure played an important role in the proof of the fractional
Hadwiger conjecture. 
\begin{proof}
[Proof of Theorem \ref{thm:Vol_even}] We begin with the right hand
side. Assume that $\|f*g\|_{\infty}=f*g(x_{0})$. Consider the measure
$\mu$ with density $\frac{f^{2}\left(x/2+x_{0}/2\right)}{\|f*g\|_{\infty}}$
with respect to the Lebesgue measure. We claim that this is a covering
measure of $f$ by $g$. Indeed, since $f$ is log-concave we have
that $f^{2}\left(\frac{x-y+x_{0}}{2}\right)\ge f\left(x\right)f\left(-y+x_{0}\right)$
for all $x,y\in\R^{n}$. Therefore, it follows that 
\[
\left(\mu*g\right)\left(x\right)=\frac{\int f^{2}\left(\frac{x-y+x_{0}}{2}\right)g\left(y\right)dy}{f*g(x_{0})}\ge\frac{f\left(x\right)\int f\left(-y+x_{0}\right)g\left(y\right)dy}{f*g(x_{0})}=f\left(x\right).
\]
Thus 
\[
N\left(f,g\right)\le\frac{\int f^{2}\left(\frac{x-x_{0}}{2}\right)dx}{\|f*g\|_{\infty}}=2^{n}\frac{\int f^{2}}{\|f*g\|_{\infty}}.
\]
For the left hand side inequality, consider the measure $\rho$ with
density $\frac{f\left(x\right)}{\|f*g\|_{\infty}}dx$. Thus 
\[
\sup_{x}\left(\rho*g\right)\left(x\right)=\frac{\sup_{x}\int f\left(y\right)g\left(x-y\right)dy}{\|f*g\|_{\infty}}=1
\]
which means that $\rho$ is $g$-separated. Therefore 
\[
M\left(f,g\right)\ge\frac{\int f^{2}}{\|f*g\|_{\infty}}.
\]
By Theorem \ref{thm:Sd-lcf}, $M\left(f,g_{-}\right)=N\left(f,g\right)$
and the proof is complete.
\end{proof}
\begin{rem*}
\noindent An analogue with weight function $h$ for the right hand
side can be written. In the even case we get 
\[
N\left(f,g,h\right)\le2^{n}\frac{\int f^{2}\left(x\right)h(x)dx}{\int f\left(x\right)g\left(x\right)dx}.
\]
\end{rem*}

\section{A remark on the Functional Hadwiger conjecture\label{sec:Functional-Hadwiger-conjecture}}

A famous conjecture, known as the Levi-Hadwiger or the Gohberg-Markus
covering problem, was posed in \cite{Levi55}, \cite{Hadwiger57}
and \cite{GohMar60}. It states that in order to cover a convex body
by slightly smaller copies of itself, one needs at most $2^{n}$ copies.
More precisely: 
\begin{conjecture}
Let $K\sub\R^{n}$ be a convex body with non empty interior. Then
there exists $0<\lambda<1$ such that 
\[
N(K,\lambda K)\le2^{n}.
\]
Moreover, equality holds if and only if $K$ is a parallelotope.

\noindent An equivalent form of this conjecture is that $N(K,{\rm int}(K))\le2^{n}$,
where $\inte K$ is the interior of $K$.
\end{conjecture}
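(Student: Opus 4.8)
Since this is the classical Levi--Hadwiger / Gohberg--Markus covering problem, which is open for all $n\ge 3$, I cannot offer a complete proof; what follows is the standard line of attack, carried as far as it is currently known to go, together with an honest account of where it stalls. \textbf{Step 1: pass to illumination numbers.} The first move is Boltyanski's equivalence: for a convex body $K\sub\R^{n}$ with nonempty interior, $N(K,\inte K)$ equals the illumination number $I(K)$ --- the least $m$ for which there exist directions $d_{1},\dots,d_{m}\in\Sph^{n-1}$ illuminating every boundary point of $K$ --- and, by a compactness argument, $N(K,\inte K)=\min_{0<\lambda<1}N(K,\lambda K)$, so the two forms of the conjecture are literally the same statement. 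Hence it suffices to prove $I(K)\le 2^{n}$, with equality exactly for parallelotopes. This reformulation already delivers the easy half of the equality claim: for $K=[0,1]^{n}$ no single direction illuminates two vertices, so the $2^{n}$ vertices force $I(K)\ge 2^{n}$, while $2^{n}$ directions obviously suffice, giving $I(\text{parallelotope})=2^{n}$.

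\textbf{Step 2: a volumetric upper bound.} The next step is to cover $K$ by translates of a slightly shrunk copy by a Rogers-type random (or lattice) covering: combining the Rogers covering lemma with the Rogers--Shephard inequality $\vol(K-K)\le\binom{2n}{n}\vol(K)$ yields, for every convex body $K$, a bound $N(K,\inte K)\le\binom{2n}{n}\bigl(n\ln n+n\ln\ln n+cn\bigr)$, i.e.\ of order $4^{n}$ up to a polynomial factor (with only mild improvements available in the centrally symmetric case). Up to lower-order terms this is the best general upper bound currently known.

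\textbf{Step 3 (the obstacle): closing the exponential gap and handling equality.} This is exactly where the argument halts, and why the statement is a conjecture and not a theorem. The conjectured extremizer, the parallelotope with value precisely $2^{n}$, lives nowhere near the volumetric regime ($\approx 4^{n}$), whereas smooth bodies realize only $n+1$; there is no known induction on dimension, no interpolation between these two regimes, and no refinement of the volumetric method that reaches $2^{n}$ for all $K$. The cases that are settled --- smooth convex bodies; zonotopes and, more generally, belt bodies and belt polytopes; bodies of constant width in low dimension; dual cyclic polytopes; and all planar convex bodies --- each exploit smoothness or combinatorial structure that is simply unavailable in general, and the equality characterization would moreover require a stability version of whatever bound one proves. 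The honest conclusion is therefore that the final statement is an open conjecture: the paper records it only to motivate the functional Hadwiger-type results of this section, where the analogous statement \emph{is} provable --- with constant $2^{n}$ for even geometric log-concave functions and with a $C^{n}$ bound in the general case --- and establishing those functional analogues, rather than attacking the classical conjecture itself, is what the remainder of the section does.
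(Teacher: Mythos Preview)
Your assessment is correct: the statement is presented in the paper as an open conjecture, not as a theorem, and the paper offers no proof of it whatsoever. The paper simply records the conjecture, remarks that ``not much has been unraveled so far'', recalls the fractional result from \cite{ArtSlom14-FracCov}, and then proves the functional analogue (Theorem~\ref{thm:FunctionalHadwiger}). So there is nothing to compare your argument against; your decision to sketch the state of the art and then explain why the conjecture remains open is the appropriate response.

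Two minor corrections to your closing summary. First, in the non-even functional case the paper proves the explicit bound $4^{n}$, not merely some unspecified $C^{n}$. Second, the paper's functional theorem contains no equality characterization, so your parenthetical about stability and equality, while relevant to the classical conjecture, does not correspond to anything the paper establishes in the functional setting.
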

This problem has drawn much attention over the years, but not much
has been unraveled  so far. Our paper \cite{ArtSlom14-FracCov} has
addressed this problem by using fractional covering numbers for convex
sets. We showed, in the language of the current paper: 
\begin{thm}
\label{thm:WeightedHadwiger} Let $K\sub\R^{n}$ be a convex body.
Then 
\[
\lim_{\lambda\to1^{-}}N^{1}(1_{K},1_{\lambda K})\le\begin{cases}
2^{n} & K=-K\\
{2n \choose n} & K\neq-K
\end{cases}
\]
Moreover, for centrally symmetric $K$, ${\displaystyle \lim_{\lambda\to1^{-}}N_{\omega}(K,\lambda K)=2^{n}}$
if and only if $K$ is a parallelotope. 
\end{thm}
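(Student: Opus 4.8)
I would prove the stated inequality and the equality case separately; the inequality is soft, while the equality case is the real work. For the inequality I use one explicit covering measure. For convex bodies $K$ and $T$,
\[
N^{1}(\one_{K},\one_{T})\le\frac{\vol(K-T)}{\vol(T)},
\]
because the measure $\mu$ with constant density $\vol(T)^{-1}$ on $K-T$ covers $\one_{K}$ by $\one_{T}$: for $x\in K$ one has $x-T\subseteq K-T$, hence $(\mu*\one_{T})(x)=\vol\big((K-T)\cap(x-T)\big)/\vol(T)=\vol(x-T)/\vol(T)=1$. (This is the indicator-function instance of the right-hand bound in Theorem~\ref{thm:Vol_general}, but the direct construction makes no assumption on the position of $K$.) Applying this with $T=\lambda K$, and using that $K-\lambda K\to K-K$ in the Hausdorff metric as $\lambda\to1^{-}$ together with continuity of volume, gives $\lim_{\lambda\to1^{-}}N^{1}(\one_{K},\one_{\lambda K})\le\vol(K-K)/\vol(K)$. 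If $K=-K$ then $K-K=2K$ and this equals $2^{n}$; for general $K$ the Rogers--Shephard inequality $\vol(K-K)\le\binom{2n}{n}\vol(K)$ gives the bound $\binom{2n}{n}$. This proves the displayed estimate.

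For the ``moreover'' part, the direction ``$K$ a parallelotope $\Rightarrow$ limit $=2^{n}$'' is short. By the $GL_{n}$- and translation-invariance of $N^{1}$ recorded in Section~\ref{sec:Basic-identities-and}, it suffices to take $K=[-1,1]^{n}$. Let $\rho=\sum_{v}\delta_{v}$ be the sum of unit point masses over the $2^{n}$ vertices of $K$. Any two distinct vertices differ by $2$ in some coordinate, whereas each translate $x-\lambda K$ is a box of side $2\lambda<2$; hence no such box contains two vertices, so $\rho*\one_{\lambda K}\le1$, while $\int\one_{K}\,d\rho=2^{n}$ since every vertex lies in $K$. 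Weak duality (Proposition~\ref{prop:WeakDuality}, together with $(\one_{\lambda K})_{-}=\one_{\lambda K}$) then gives $N^{1}(\one_{K},\one_{\lambda K})\ge M^{1}(\one_{K},(\one_{\lambda K})_{-})\ge2^{n}$, and with the upper bound the limit equals $2^{n}$.

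The converse — that a centrally symmetric $K$ with $\lim_{\lambda\to1^{-}}N^{1}(\one_{K},\one_{\lambda K})=2^{n}$ must be a parallelotope — is the step I expect to be the main obstacle. The estimate of the first paragraph is powerless here, since $\vol(K-\lambda K)/\vol(\lambda K)=\big((1+\lambda)/\lambda\big)^{n}\to2^{n}$ for \emph{every} symmetric body; and even sharper soft refinements fail to separate parallelotopes from the rest — for instance, replacing $\mu$ by the uniform measure of density $\big(\min_{x\in K}\vol(K\cap(x+K))\big)^{-1}$ on $K$ only yields the bound $\vol(K)/\min_{x\in K}\vol(K\cap(x+K))$, which already equals $2^{n}$ for the cross-polytope in $\R^{3}$. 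The genuine argument, which I would take over from \cite{ArtSlom14-FracCov}, works with near-optimal covering measures $\mu_{\lambda}$ — these exist with $\mu_{\lambda}(\R^{n})=N^{1}(\one_{K},\one_{\lambda K})$ by the strong duality of Section~\ref{sec:StrongDuality} (e.g.\ Theorem~\ref{thm:SD_ext_1}) and Theorem~\ref{thm:Sd-lcf} — passes to a weak$^{*}$ limit $\mu$ of a bounded subsequence as $\lambda\to1^{-}$, and shows that attaining $2^{n}$ forces $\mu$ to be supported on $2^{n}$ points which can only be the vertices of a parallelotope, equivalently that equality in the underlying volume comparison forces a facet-by-facet tiling structure on $K$. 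Establishing this rigidity, together with the identification of the extremizers, is substantially more delicate than everything that precedes it.
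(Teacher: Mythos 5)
A point of orientation first: the paper contains no proof of Theorem \ref{thm:WeightedHadwiger}; it is imported verbatim from the authors' earlier work \cite{ArtSlom14-FracCov}, so there is no in-paper argument to compare yours against. Judged on its own, what you do prove is correct. The measure of constant density $\vol(\lambda K)^{-1}$ on $K-\lambda K$ is indeed a covering measure of $\one_{K}$ by $\one_{\lambda K}$, giving $N^{1}(\one_{K},\one_{\lambda K})\le\vol(K-\lambda K)/\vol(\lambda K)$; continuity of volume yields the limit $\vol(K-K)/\vol(K)$, which is $2^{n}$ in the symmetric case and at most $\binom{2n}{n}$ in general by Rogers--Shephard. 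Likewise, the vertex measure $\sum_{v}\delta_{v}$ for $K=[-1,1]^{n}$ is $\one_{\lambda K}$-separated for every $\lambda<1$, and weak duality (Proposition \ref{prop:WeakDuality}, with $(\one_{\lambda K})_{-}=\one_{\lambda K}$) gives the matching lower bound $2^{n}$, so parallelotopes attain the value $2^{n}$.

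As a proof of the full statement, however, there is a genuine gap: the ``only if'' half of the moreover clause --- that a centrally symmetric $K$ whose limit equals $2^{n}$ must be a parallelotope --- is not established. You acknowledge this and sketch a strategy (optimal covering measures via strong duality, a weak{*} limit as $\lambda\to1^{-}$, and a rigidity step forcing the limit measure onto $2^{n}$ points), but none of the substantive steps is carried out: you do not show that the limiting measure must be atomic, nor that attaining $2^{n}$ forces the tiling/vertex structure, nor why that structure characterizes parallelotopes. Your own side remark about the octahedron in $\R^{3}$ (where the soft bound $\vol(K)/\min_{x\in K}\vol(K\cap(x+K))$ already equals $2^{n}$) shows exactly why nothing in your first two paragraphs can separate parallelotopes from other bodies; the characterization genuinely requires the argument of \cite{ArtSlom14-FracCov}, which you cite but do not reproduce. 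So your write-up correctly settles the displayed inequality and one direction of the equivalence, and correctly locates, but does not close, the hard direction.
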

The bound $2^{n}$ for the non-centrally symmetric case remains a
conjecture even in the fractional setting. (Things get significantly
better if one considers $N(K,-{\rm int}(K))$, though.) In this note
we address the functional version. As we shall next demonstrate, it
follows from our volume bounds that 
\begin{thm}
\label{thm:FunctionalHadwiger}Let $f\in\lcg$ be an even geometric
log-concave function. Then 
\[
\lim_{\lambda\to1^{-}}N(f,f_{\lambda})\le2^{n}
\]
where $f_{\lambda}(x)=f(x/\lambda)^{\lambda}$. In the case where
$f$ is not even, we have that 
\[
\lim_{\lambda\to1^{-}}N\left(f,f_{\lambda}\right)\le4^{n}.
\]
\end{thm}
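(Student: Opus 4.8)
The plan is to apply the volume bounds of Theorem \ref{thm:Vol_even} to the pair $f, f_\lambda$ and track what happens as $\lambda\to1^-$. First I would record the basic facts about $f_\lambda(x)=f(x/\lambda)^\lambda$: since $f\in\lcg$ is log-concave with $f(0)=1$, the function $f_\lambda$ is again in $\lcg$ (it is log-concave as the composition of the concave $\log f$ with a dilation, rescaled by $\lambda$), and $f_\lambda\to f$ pointwise, in fact monotonically in an appropriate sense, as $\lambda\to1^-$. The key quantity to control is $\|f\star f_\lambda\|_\infty$ in the denominator of the upper bound, or rather its even-case incarnation $\int f(x)f_\lambda(x)\,dx$. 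By the even-case bound in Theorem \ref{thm:Vol_even},
\[
N(f,f_\lambda)\le 2^n\,\frac{\int f^2(x)\,dx}{\int f(x)f_\lambda(x)\,dx}.
\]
As $\lambda\to1^-$, $f_\lambda(x)\to f(x)$ pointwise and $f_\lambda$ stays dominated (say by $f^{1/2}$ or any fixed integrable log-concave majorant once $\lambda$ is close to $1$), so by dominated convergence $\int f f_\lambda\to\int f^2$, which is finite and positive since $f\in\lcg$. Hence the ratio tends to $1$ and $\limsup_{\lambda\to1^-}N(f,f_\lambda)\le 2^n$, which is the even case.

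For the non-even case, the natural route is to symmetrize. I would apply the submultiplicativity-type Fact \ref{fact:Addition_two_fcns} (or Fact \ref{fact:sub_mult}) together with a comparison of $f$ against its symmetrization. Concretely, writing $\bar f(x)=\sqrt{f(x)f(-x)}$, one has $\bar f\in\lcg$ and $\bar f$ is even; moreover $f\star f_-\ge \bar f\star \bar f$ pointwise (or a similar sandwich), which should let one compare $N(f,f_\lambda)$ to a product of a covering of $f$ by $\bar f$, a covering of $\bar f$ by $\bar f_\lambda$, and a covering of $\bar f_\lambda$ by $f_\lambda$. The first and last factors are controlled by a dimension-free-in-$\lambda$ volume bound of the form $2^n$ each (coming again from Theorem \ref{thm:Vol_even} applied to $f$ and $\bar f$, using that $\int f^2/\int f\bar f$ and the reverse are each at most $2^n$ by log-concavity and Cauchy--Schwarz), while the middle factor tends to $2^n$ by the even case just proved; multiplying gives $4^n$ in the limit after the two ``easy'' factors are seen to contribute a bounded amount. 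Alternatively — and perhaps cleaner — one bounds directly $N(f,f_\lambda)\le 2^n \int f^2/\|f\star f_\lambda\|_\infty$ and estimates $\|f\star f_\lambda\|_\infty$ from below by $\|\bar f\star\bar f_\lambda\|_\infty \ge (\text{const})\cdot 2^{-n}\int \bar f^2$, losing exactly the extra factor $2^n$ relative to the even case.

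The main obstacle I anticipate is the non-even case: one must produce an honest lower bound for $\|f\star f_\lambda\|_\infty$ (equivalently $\int f f_\lambda$ in the even reduction) that degrades by at most a factor $2^n$ compared to $\int f^2$, uniformly as $\lambda\to1^-$. The even case is essentially a soft dominated-convergence argument, but the non-even case requires a genuine geometric input — either a symmetrization inequality for the Asplund product, or a direct estimate showing $\|f\star f_-\|_\infty\ge 2^{-n}\int f^2/\int f^2 \cdot(\ldots)$ — and one has to check that the dilation/rescaling defining $f_\lambda$ interacts well with this estimate so that nothing blows up in the limit $\lambda\to1$. Once that uniform lower bound is in hand, both halves of the theorem follow by taking $\limsup$ in the volume bound.
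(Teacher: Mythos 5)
Your even case is essentially correct: applying the even-case bound of Theorem \ref{thm:Vol_even}, $N(f,f_{\lambda})\le 2^{n}\int f^{2}/\int f f_{\lambda}$, and since $f f_{\lambda}\le f$ (because $f_{\lambda}\le 1$) with $f f_{\lambda}\to f^{2}$ almost everywhere, dominated convergence gives the limit $2^{n}$. (The paper instead uses Theorem \ref{thm:Vol_general} with $p=2$, so the bound becomes $\int f\star f_{\lambda}/\int f_{\lambda}^{2}\to\int f\star f/\int f^{2}=2^{n}$ via the identity $(f\star f)(x)=f^{2}(x/2)$; both routes work.)

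The non-even case, however, has a genuine gap. Your three-factor chain through the symmetrization $\bar f=\sqrt{f\,\refl f}$ would, even if each factor were $2^{n}$, only yield $8^{n}$, not the claimed $4^{n}$; and the two ``easy'' factors are not actually controlled: the volume bound for $N(f,\bar f)$ has $\int \bar f^{2}=\int f\,\refl f$ in the denominator, and since the theorem assumes no centering this can vanish --- for $f=\one_{[0,1]^{n}}$ (a legitimate element of $\lcg$) the product $f\,\refl f$ is supported at a single point, $\bar f=0$ a.e., and $N(f,\bar f)=\infty$. Your ``cleaner'' alternative has the same defect: as written it uses $\|f\star f_{\lambda}\|_{\infty}$, which for the sup-convolution is identically $1$ and makes the bound vacuous; reading $*$ instead, the estimate you need, $\|f*f\|_{\infty}\ge 2^{-n}\int f^{2}$, is a functional Milman--Pajor type inequality that you neither state precisely nor prove, and the version you actually invoke (lower bounding by $\int\bar f^{2}$) again fails without a barycenter assumption. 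The paper's proof sidesteps all of this by sub-multiplicativity through the \emph{reflection} rather than the symmetrization: $N(f,f_{\lambda})\le N\left(f,(\refl f)_{\lambda}\right)N\left((\refl f)_{\lambda},f_{\lambda}\right)$ by Fact \ref{fact:sub_mult}, and then the $p=2$ bound of Theorem \ref{thm:Vol_general} --- whose right-hand side involves $g_{-}$ --- turns the two factors into $\int f\star f_{\lambda}/\int f_{\lambda}^{2}$ and $\int f_{\lambda}\star f_{\lambda}/\int f_{\lambda}^{2}$, each tending to $\int f\star f/\int f^{2}=2^{n}$ because $\int f\star f=2^{n}\int f^{2}$ holds for every geometric log-concave $f$, even or not, centered or not. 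That is where the clean $4^{n}$ comes from; to salvage your route you would need either a centering hypothesis together with a quantitative Milman--Pajor input, or to replace the symmetrization by the reflection as above.
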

\begin{proof}
~Theorem \ref{thm:Vol_general}, applied with $p=2$, implies that 

\[
\lim_{\lambda\to1^{-}}N\left(f,f_{\lambda}\right)\le\lim_{\lambda\to1^{-}}\frac{\int\left(f\star f_{\lambda}\right)\left(x\right)dx}{\int f_{\lambda}^{2}\left(x\right)dx}=\frac{\int\left(f\star f\right)\left(x\right)dx}{\int f^{2}\left(x\right)dx}=\frac{\int f\,^{2}\left(x/2\right)dx}{\int f^{2}dx}=2^{n}.
\]
In the general case, where $f$ is not necessarily even, we use Fact
\ref{fact:sub_mult}, and Theorem \ref{thm:Vol_general} to obtain
\[
\lim_{\lambda\to1^{-}}N(f,f_{\lambda})\le\lim_{\lambda\to1^{-}}N\left(f,\left(f_{-}\right)_{\lambda}\right)N\left(\left(f_{-}\right)_{\lambda},f_{\lambda}\right)\le\lim_{\lambda\to1^{-}}\frac{\int f\star f_{\lambda}}{\int f_{\lambda}^{2}}\cdot\frac{\int f_{\lambda}\star f_{\lambda}}{\int f_{\lambda}^{2}}=\left(\frac{\int f\star f}{\int f^{2}}\right)^{2}=4^{n}.
\]
\end{proof}

\section{M-position for functions\label{sec:M-position-for}}

One of the deepest results in asymptotic geometric analysis is the
existence of an $M$-ellipsoid associated with a convex body, namely
that for every convex body $K\subset\R^{n}$ there exists an ellipsoid
of the same volume which can replace it, in many volume computations,
up to universal constants. This profound result was discovered by
V. Milman \cite{Milman86,Milman88}, and leads to many far reaching
conclusions, among them are the reverse Blaschke-Santaló inequality
\cite{BourgainMilman87} and the reverse Brunn-Minkowski inequality.
For a detailed account of this subject, see \cite[Ch. 8]{AGM15}.

In the functional setting, Klartag and Milman in \cite{KM05} showed
a reverse Brunn-Minkowski inequality for functions. This requires
a choice of a position of course. They proved the following functional
version of the inverse Brunn-Mikowski inequality. 
\begin{thm}
[Klartag-Milman \cite{KM05}] \label{thm:KM_inv-BM} For every $f:\R^{n}\to\left[0,\infty\right)$
which is an even geometric log-concave function, there exists $T_{f}\in SL_{n}$
such that the following holds: Let $f,h:\R^{n}\to\left[0,\infty\right)$
be even geometric log-concave functions. Then letting $\widetilde{f}=f\circ T_{f}$
and $\widetilde{h}=g\circ T_{h}$ one has 
\[
\left(\int\widetilde{f}\star\widetilde{h}\right)^{1/n}<C\left[\left(\int\widetilde{f}\right)^{1/n}+\left(\int\widetilde{h}\right)^{1/n}\right]
\]
where $C>0$ is a universal constant, independent of the dimension
and of $f$ and $h$. 
\end{thm}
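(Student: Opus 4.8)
The plan is to reduce the statement to the reverse Brunn-Minkowski inequality for convex bodies, for which the relevant position --- the $M$-position --- is already supplied by Milman's theorem \cite{Milman86}; the bridge is the classical correspondence, due to K.~Ball, between log-concave functions and convex bodies. To an even $f\in\lcg$ I would attach the origin-symmetric convex body $K(f)\sub\R^{n}$ whose radial function is
\[
\rho_{K(f)}(\theta)=\left(n\int_{0}^{\infty}f(r\theta)\,r^{n-1}\,dr\right)^{1/n},\qquad\theta\in\Sph^{n-1}
\]
(a genuine convex body by Ball's convexity lemma, which uses the log-concavity of $f$). Two structural facts, both obtained by integrating in polar coordinates, are all I need from this construction: $\vol(K(f))=\int_{\R^{n}}f$, and $K(f\circ T)=T^{-1}K(f)$ for every $T\in GL_{n}$, so that precomposing $f$ with a map in $SL_{n}$ is the same as passing to a volume-preserving linear image of $K(f)$. (One could instead lift $f$ to a convex body in $\R^{n+1}$; the normalisation above is convenient because it makes the volume identity an exact equality. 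Note also that if $f,g\in\lcg$ are even then $f\star g\in\lcg$ is again even, so $K(f\star g)$ makes sense.)

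The heart of the argument is a purely geometric comparison between sup-convolution of functions and Minkowski addition of the associated bodies: there should be a universal constant $c>0$, independent of the dimension, with $K(f\star g)\sub c\,\bigl(K(f)+K(g)\bigr)$ for all even $f,g\in\lcg$. I would establish this by bounding the gauge of $K(f\star g)$ directly from the definition $(f\star g)(x)=\sup_{z}f(z)g(x-z)$ and comparing it against the inf-convolution of the gauges of $K(f)$ and $K(g)$; as in Ball's work the estimate is cleanest if one is allowed to vary the integration exponent in the definition of $K(\cdot)$ within a range comparable to $n$, exploiting that the bodies $K_{p}(f)$ with $p\asymp n$ are mutually comparable up to a universal factor. (For indicators the inclusion is the exact identity $K(\one_{A}\star\one_{B})=A+B$, which is the reason $\star$ is the right functional analogue of Minkowski addition here.) This is the step I expect to be the main obstacle: obtaining such an inclusion with a dimension-free constant is a genuinely quantitative matter, since the reverse direction in Brunn-Minkowski has no soft proof --- by contrast the forward estimate $\int f\star g\ge\bigl((\int f)^{1/n}+(\int g)^{1/n}\bigr)^{n}$ follows at once from the Pr\'ekopa-Leindler inequality.

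With these pieces in hand the conclusion is routine. By Milman's theorem every convex body admits a volume-preserving linear image in $M$-position, and for bodies $A,B$ in $M$-position one has the reverse Brunn-Minkowski inequality $\vol(A+B)^{1/n}\le c_{0}\bigl(\vol(A)^{1/n}+\vol(B)^{1/n}\bigr)$ with $c_{0}$ universal. Thus, given $f$, I would choose $u\in SL_{n}$ so that $uK(f)$ is in $M$-position and put $T_{f}=u^{-1}\in SL_{n}$; then $K(f\circ T_{f})=uK(f)$ is in $M$-position, and $T_{f}$ depends only on $f$. Running the same recipe on $h$ produces $T_{h}$, and for $\tilde f=f\circ T_{f}$ and $\tilde h=h\circ T_{h}$ one simply chains the three facts:
\begin{align*}
\left(\int\tilde f\star\tilde h\right)^{1/n} &=\vol\bigl(K(\tilde f\star\tilde h)\bigr)^{1/n}\ \le\ c\,\vol\bigl(K(\tilde f)+K(\tilde h)\bigr)^{1/n}\\
 &\le\ c\,c_{0}\left(\vol(K(\tilde f))^{1/n}+\vol(K(\tilde h))^{1/n}\right)\ =\ c\,c_{0}\left(\left(\int\tilde f\right)^{1/n}+\left(\int\tilde h\right)^{1/n}\right),
\end{align*}
which is the assertion with $C=c\,c_{0}$ (if a strict inequality is wanted, enlarge $C$ slightly). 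Since $\det T_{f}=\det T_{h}=1$ one has $\int\tilde f=\int f$ and $\int\tilde h=\int h$, so the normalisation is preserved throughout. This is, in outline, the route of Klartag and Milman \cite{KM05}.
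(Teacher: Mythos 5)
The paper does not actually prove Theorem \ref{thm:KM_inv-BM}: it is imported from \cite{KM05}, and within the paper it is only rederived after the fact, via the functional covering-number machinery, from Theorem \ref{thm:MpositionMAIN} together with Proposition \ref{pro:impM}. Your reduction to Milman's $M$-position theorem for convex bodies is therefore a different, more elementary route --- essentially the original Klartag--Milman argument --- except that you pass from $f$ to a body through Ball's radial construction $K(f)$ rather than through the level set $K_f=\{x:f(x)>e^{-n}\}$ that \cite{KM05} and Section \ref{sec:direct_M_pos} of the paper use. Your bookkeeping is fine: the volume identity $\vol(K(f))=\int f$, the rule $K(f\circ T)=T^{-1}K(f)$, the choice $T_f=u^{-1}\in SL_n$, and the final chain via the reverse Brunn--Minkowski inequality for symmetric bodies in $M$-position are all correct.

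The genuine gap is the step you yourself flag: the inclusion $K(f\star g)\sub c\,(K(f)+K(g))$ with dimension-free $c$ is asserted, not proved, and the method you sketch for it is shaky, because $(f\star g)(r\theta)=\sup_{z}f(z)g(r\theta-z)$ takes a supremum over all $z\in\R^{n}$, not only over the ray $\R\theta$; so the integrand along a ray is not the one-dimensional sup-convolution of the restrictions, and a ray-by-ray comparison of gauges does not go through as stated. The clean fix is exactly the level-set body: since $f,g\le1$, the condition $(f\star g)(x)>e^{-n}$ forces $f(z)>e^{-n}$ and $g(x-z)>e^{-n}$ for some $z$, whence $K_{f\star g}\sub K_f+K_g$ with constant $1$; and for $f\in\lcg$ one has $e^{-1}K_f\sub K(f)\sub2K_f$, because along each ray $-\log f$ is convex, vanishes at $0$ and is at least $n$ beyond $\rho_{K_f}(\theta)$, so $n\int_{0}^{\infty}f(r\theta)r^{n-1}dr$ lies between $e^{-n}\rho_{K_f}(\theta)^{n}$ and $2\rho_{K_f}(\theta)^{n}$. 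Combining these gives your inclusion with, say, $c=2e$. Alternatively, drop Ball's body altogether and run your argument with $K_f$ itself, replacing the exact identity $\vol(K(f))=\int f$ by the two-sided bound of Lemma \ref{lem: Direct_M_1}; the extra factor $C^{n}$ is harmless after taking $n$-th roots. With either repair your proof is complete and is genuinely shorter than the paper's own derivation, which routes through functional covering numbers, the functional K\"onig--Milman theorem and the Santal\'o inequality and its reverse; what the paper's route buys in exchange is the stronger covering-number form of the $M$-position (Theorem \ref{thm:MpositionMAIN}) and the extension to non-even centered functions, neither of which your argument addresses.
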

In this section we extend their result. We show that two definitions
of $M$-position for functions are equivalent, one which uses volume
inequalities and another which uses functional covering numbers. We
then show that every geometric log-concave function admits an $M$-position
with a universal constant, a fact implicitly shown already in \cite{KM05}
but which now bears a stronger meaning as described in Theorem \ref{thm:M_pos_equiv}
below. While this fact can be deduced from Theorem \ref{thm:KM_inv-BM},
we give a independent proof which uses the covering number estimates
in $M$-positions of convex bodies. In particular we show that Theorem
\ref{thm:KM_inv-BM} may be extended to the non-even case, if the
center of mass of $f$ is assumed to be at the origin. The non-centrally-symmetric
case of the classical $M$-position was treated in \cite{PajorMilman2000}.

We would like to mention another result of a similar flavor, of Bobkov
and Madiman \cite{BoMa12}. They use another functional variation
for Minkowski addition coming from the addition of random variables
(thus pertaining to the usual convolution of the density functions)
and consider entropy instead of volume. Under this setting they show
the existence of positions for which a reverse Brunn-Minkowski-type
inequality holds. Their results are very different from ours, in particular
the sum of two indicators is no longer an indicator. However, their
results are in a very general setting of $\beta$-concave functions
(with the constants involved depending on the degree of concavity,
and becoming universal when the densities are log-concave). 

To simplify the exposition it is useful to first state and prove a
duality result in the flavor of König and Milman, and this is done
in Section \ref{subsec:Func_KM}. The two equivalent definitions for
$M$-position, and the proof that they are equivalent are given in
Section \ref{subsec:M_pos_equiv}, and the fact that Theorem \ref{thm:KM_inv-BM}
implies the existence of an $M$-position is given in Section \ref{subsec:M-pos_existence}.
Finally, in Section \ref{sec:direct_M_pos} we give a new proof of
the existence of $M$-position for functions. 

Throughout this section, $A\sim B$ means that $c^{n}B\le A\le BC^{n}$
for some universal constants $c,C>0$. Moreover, the value of such
universal constants may change from line to line, and the reader may
take the minimum (or maximum) of the constants appearing (which one
could name $C_{1},C_{2},$ etc.) as the final constant in the main
theorems.

\subsection{\label{subsec:Func_KM}A König-Milman type result for functions }

Let $\iprod{\cdot}{\cdot}$ denote the standard scalar product on
$\R^{n}$. The polar of a convex set $A\sub\R^{n}$ is defined by
$A^{\circ}=\left\{ v\in\R^{n}:\ \sup_{x\in K}\iprod vx\le1\right\} $.
Given a centrally symmetric convex body $A$ (i.e., a convex set with
non-empty interior), the polar set $A^{\circ}$ is again a centrally
symmetric convex body. The notion of duality is very basic in geometry
and analysis. It admit a natural functional extension which is the
Legendre transform for convex functions:
\[
\left(\L\varphi\right)\left(x\right)=\sup\iprod xy-\varphi\left(y\right).
\]
In the log-concave world this transform gives for a log-concave $f$
a natural dual $f^{*}=e^{-\L\varphi}$. This choice of duality has
been used in numerous works, for example in \cite{BallPhD,AKM2005,KM05}
where functional versions of the Santaló inequality and its reverse
were proven. We shall discuss below yet another candidate for the
``polar function'' of a geometric log-concave $f$.

Going back to geometric duality, a central question for covering numbers,
proposed by Piestch \cite{Piestch72} was to determine the relation
between $N(K,tT)$ and $N(T^{\circ},tK^{\circ})$, as functions of
$t\in\R^{+}$. This is called ``duality of entropy numbers''. Many
results on this question have been proven by now, see \cite{AGM15}.
One of them is the following well-known duality of entropy result
due to H. König and V. Milman \cite{KonigMilman87}: There exists
a numerical constant $C>0$ such that for any two centrally symmetric
convex bodies $K,T\sub\R^{n}$ one has 
\begin{equation}
C^{-n}N\left(T^{\circ},K^{\circ}\right)\le N\left(K,T\right)\le C^{n}N\left(T^{\circ},K^{\circ}\right).\label{eq:class_KoMi}
\end{equation}

Using the suitable corresponding notion of duality for log-concave
functions given by the Legendre transform, we prove an analogous functional
result: 
\begin{thm}
\label{thm:Func_Konig_Milman}There exists a numerical constant $C>0$
such that for any dimension $n$ and any two functions $f,g\in\lcg$
with center of mass at the origin, one has 
\[
C^{-n}N\left(g^{*},f^{*}\right)\le N\left(f,g\right)\le C^{n}N\left(g^{*},f^{*}\right).
\]
\end{thm}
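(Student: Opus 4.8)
The plan is to reduce the functional König–Milman inequality to the classical one \eqref{eq:class_KoMi} for convex bodies by a standard discretization of log-concave functions into level sets. First I would recall the correspondence between a geometric log-concave function $f=e^{-\varphi}\in\lcg$ and the family of its level sets $K_t(f)=\{x:\varphi(x)\le t\}=\{x:f(x)\ge e^{-t}\}$, which are convex bodies (containing the origin in their interior when $\varphi(0)=0$) that are nested and increasing in $t$. The key dictionary fact is that the Legendre dual $f^*=e^{-\L\varphi}$ has level sets that are, up to scaling by $t$, the polars of the level sets of $f$: more precisely $K_t(f^*)\supseteq t\,(K_t(f))^{\circ}$ and $K_t(f^*)\subseteq \big(K_{t'}(f)\big)^{\circ}$-type two-sided inclusions hold for comparable parameters (the Santaló-type correspondence used in \cite{AKM2005,KM05}). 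The hypothesis that the center of mass is at the origin is exactly what guarantees, via the Bourgain–Milman / Santaló regime and the regularity it forces on $\varphi$ near $0$, that these level sets are genuine bodies with controlled eccentricity so the polarity dictionary loses only a universal exponential-in-$n$ factor.

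Next I would set up the two inequalities $N(f,g)$ and $N(g^*,f^*)$ in terms of covering numbers of level sets. On one side, a covering of $f$ by $g$ can be built from coverings of finitely many level sets $K_t(f)$ by dilates of $K_s(g)$: using the sub-multiplicativity Fact \ref{fact:sub_mult} together with the volume bounds of Theorems \ref{thm:Vol_general} and \ref{thm:Vol_even}, one shows $N(f,g)$ is comparable, up to a $C^n$ factor, to a supremum (or a suitable geometric average) of classical covering numbers $N(K_t(f),K_s(g))$ over a bounded range of $t,s$; the integral-to-volume passage in the volume bounds is what converts functional quantities into body quantities. On the other side, the same reduction applied to $N(g^*,f^*)$ expresses it in terms of $N(K_t(g^*),K_s(f^*))$, and by the polarity dictionary these are comparable (again up to $C^n$) to $N((K_{t'}(g))^{\circ},(K_{s'}(f))^{\circ})$. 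Now invoke the classical König–Milman inequality \eqref{eq:class_KoMi} for each pair of centrally symmetric... — here is the first real subtlety: the level sets need not be centrally symmetric, so I would either use the non-symmetric version of duality of entropy (citing the appropriate extension, or passing to $K\cap(-K)$ and $K+(-K)$ which costs only $2^n$ by the hypothesis on the barycenter and John/Rogers–Shephard-type estimates) to get $N(K,T)\sim N(T^{\circ},K^{\circ})$ with universal constants.

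Assembling these comparisons chains together: $N(f,g)\le C^n\sup_{t,s}N(K_t(f),K_s(g))\le C^n\sup N((K_s(g))^{\circ},(K_t(f))^{\circ})\le C^n\sup N(K_{s'}(g^*),K_{t'}(f^*))\le C^n N(g^*,f^*)$, and symmetrically for the reverse inequality, yielding the claimed two-sided bound with a new universal constant (absorbing all the finitely many $C^n$ losses, as permitted by the convention $A\sim B$ stated in the section). The main obstacle I expect is the bookkeeping in the discretization step: controlling the range of level parameters $t$ that actually matter (so that only \emph{boundedly many} — or a bounded interval of — level sets enter, with the tails contributing negligibly), and proving the two-sided polarity dictionary $K_t(f^*)\sim (K_{ct}(f))^{\circ}$ with dimension-free constants. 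This is precisely where the barycenter-at-the-origin hypothesis is essential: without it the inner/outer radii of the level sets are uncontrolled and the exponential constants would blow up. A secondary technical point is handling the non-even case in the classical duality of entropy, for which I would reduce to the symmetric case via $K\mapsto \tfrac12(K-K)$ and $K\mapsto K\cap(-K)$, which are comparable to $K$ in the John sense up to $\sqrt n$ and hence contribute only $C^n$ to covering numbers by the standard volume bounds.
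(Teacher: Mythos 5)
Your plan is genuinely different from the paper's proof, and its load-bearing step is precisely the one you leave unproved: the two-sided comparison, up to $C^{n}$, between $N(f,g)$ and classical covering numbers $N(K_t(f),K_s(g))$ of boundedly many level sets. This does not follow from Fact \ref{fact:sub_mult} together with the volume bounds of Theorems \ref{thm:Vol_general} and \ref{thm:Vol_even}, as you suggest. One direction is indeed manageable: writing $f\le\sum_{k\ge1}e^{-(k-1)}\one_{K_k(f)}$, using $\one_{K_n(g)}\le e^{n}g$ and $K_k(f)\subseteq\tfrac{k}{n}K_n(f)$ for $k\ge n$, and summing a convergent series gives $N(f,g)\le C^{n}N\left(K_n(f),K_n(g)\right)$; this is in effect what Lemma \ref{lem:Direct_M_2} does in the special case $g=g_0$. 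The converse direction is where the naive argument loses more than $C^{n}$: from a covering measure $\mu$ with $\mu*g\ge f$ and the split $g\le\one_{K_s(g)}+e^{-s}$, you can only conclude $\mu*\one_{K_s(g)}\ge\tfrac12e^{-n}$ on $K_n(f)$ once $s\gtrsim n+\log\mu(\R^{n})$, so the depth $s$ depends on $N(f,g)$ itself; converting $K_s(g)$ back to $K_n(g)$ then costs $N\left(K_s(g),K_n(g)\right)\le\left(Cs/n\right)^{n}\approx C^{n}N(f,g)$, and the chain closes only to $N\left(K_n(f),K_n(g)\right)\le C^{n}N(f,g)^{2}$, not the linear bound your argument needs. (Also note that without restricting the levels the comparison is simply false: for $f=g_0$ and $g(x)=e^{-a|x|}$ one has $N(K_t(f),K_t(g))\to\infty$ as $t\to0$ while $N(f,g)$ is finite.) So when covering numbers are super-exponential in $n$ the ``bookkeeping'' is not merely tedious \textemdash{} the reduction as described fails, and repairing it requires a new idea (for instance passing through $\gr$-separated measures supported on a $K_n(g)$-separated point set and summing over shells, using Theorem \ref{thm:Sd-lcf}), none of which is indicated in your sketch.

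For comparison, the paper avoids level sets and the classical duality of entropy \eqref{eq:class_KoMi} altogether: in the even case it combines the volume bounds (Theorems \ref{thm:Vol_general} and \ref{thm:Vol_even}, together with $\int h^2\le\int h\le2^{n}\int h^{2}$) with the identity $\left(f^{*}\star g^{*}\right)^{*}=fg$ and the functional Santal\'o inequality and its reverse \cite{AKM2005,KM05}, and it reduces the non-even case to the even one via $f\cdot\refl f$ and $f\star\refl f$ using the functional Rogers\textendash Shephard inequality \cite{AloGonJimVil16}. Two further points in your outline would also need real proofs: \eqref{eq:class_KoMi} is stated for centrally symmetric bodies, and the level sets of a centered $f\in\lcg$ are in general neither symmetric nor barycentered at the origin (you only know $0\in K_t(f)$ since $f(0)=1$), so the claim that passing to $K\cap(-K)$ and $\tfrac12(K-K)$ costs only $C^{n}$ does not follow from the barycenter hypothesis on $f$ alone; and the polarity dictionary \eqref{eq:inc} relates $K_t(f^{*})$ to polars of level sets of $f$ at shifted parameters, so the matching of levels on the dual side is an additional piece of bookkeeping your chain glosses over.
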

\begin{proof}
Note that for every function $h\in\lcg$ one has 
\begin{equation}
\int h^{2}\le\int h\le2^{n}\int h^{2}.\label{eq:Func_KM_square}
\end{equation}
Indeed, the left hand side inequality follows from the fact that $h\le1$
and the right hand side inequality follows from $h\left(x\right)\le h^{2}\left(x/2\right)$
which holds due to the fact that $h$ is log-concave with $h\left(0\right)=1$. 

Assume first that $f$ and $g$ are both even functions. Using the
volume bound in Theorem \ref{thm:Vol_general} together with \eqref{eq:Func_KM_square}
we see that 
\[
N\left(g^{*},f^{*}\right)\le\frac{\int f^{*}\star g^{*}}{\int\left(f^{*}\right)^{2}}\le2^{n}\frac{\int f^{*}\star g^{*}}{\int f^{*}}.
\]
As $\left(f^{*}\star g^{*}\right)^{*}=fg$, the functional Santaló
inequality and its reverse (see \cite{AKM2005} and \cite{KM05})
 imply that 
\[
\frac{\int f^{*}\star g^{*}}{\int f^{*}}\le C^{n}\frac{\int f}{\int fg}
\]
for some absolute constant $C>0$. By Theorem \ref{thm:Vol_even}
and \eqref{eq:Func_KM_square}, we conclude that 
\[
N\left(g^{*},f^{*}\right)\le C_{1}^{n}\frac{\int f}{\int fg}\le C_{2}^{n}\frac{\int f^{2}}{\int fg}\le C_{2}^{n}N\left(f,g\right).
\]
To obtain the opposite inequality, one simply replaces the roles of
$f,g$ with $g^{*},f^{*}$, respectively.

Next, we prove the general case in which $f$ and $g$ are not necessarily
even. Firstly, by the above proof for even functions, we have that
\begin{equation}
\frac{1}{\left(4c\right)^{n}}N\left(g^{*}\star\refl g^{*},f^{*}\cdot\refl f^{*}\right)\le N\left(f\star\refl f,g\cdot\refl g\right)\le C_{2}^{n}N\left(g^{*}\star\refl g^{*},f^{*}\cdot\refl f^{*}\right).\label{eq:KM_Even}
\end{equation}
Secondly, by \cite[Theorem 2.2]{AloGonJimVil16}, which is a functional
version of the Rogers-Shephard inequality for the difference body,
for every geometric log-concave function $h\in\lcg$ with full-dimensional
support we have that 
\begin{equation}
\int h\star\refl h\le4^{n}\int h.\label{eq:RS_Alonso}
\end{equation}
Using the sub-multiplicativity (Fact \ref{fact:sub_mult}), monotonicity
of covering (Fact \ref{fact:cov_mono}) together with Theorem \ref{thm:Vol_even},
and \eqref{eq:RS_Alonso}, we have that 
\begin{align*}
N\left(f\star\refl f,g\cdot\refl g\right) & \le N\left(f\star\refl f,f\right)N\left(f,g\right)N\left(g,g\cdot\refl g\right)\le N\left(f\star\refl f,f\cdot\refl f\right)N\left(f,g\right)N\left(g\star\refl g,g\cdot\refl g\right)\\
 & \le4^{n}\frac{\int f\star f_{-}}{\int\left(f\star\refl f\right)\left(f\cdot\refl f\right)}N\left(f,g\right)\frac{\int g\star\gr}{\int\left(g\star\refl g\right)\left(g\cdot\refl g\right)}\\
 & \le4^{3n}\frac{\int f}{\int\left(f\cdot f_{-}\right)^{2}}N\left(f,g\right)\frac{\int g}{\int\left(g\cdot\refl g\right)^{2}}\\
 & \le4^{4n}\frac{\int f}{\int f\cdot\refl f}N\left(f,g\right)\frac{\int g}{\int g\cdot\refl g}.
\end{align*}
Hence, by Santaló inequality and its reverse, and \eqref{eq:RS_Alonso},
it follows that
\begin{align*}
N\left(f\star\refl f,g\cdot\refl g\right)\le & C_{1}^{n}\left(\int f\int f^{*}\star\refl f^{*}\right)N\left(f,g\right)\left(\int g\int g^{*}\star\refl g^{*}\right)\\
\le & C_{2}^{n}\left(\int f\int f^{*}\right)N\left(f,g\right)\left(\int g\int g^{*}\right)\\
\le & C_{3}^{n}N\left(f,g\right).
\end{align*}
 Similarly, we obtain that $N\left(f^{*}\star\refl f^{*},g^{*}\cdot\refl g^{*}\right)\le C_{3}^{n}N\left(g^{*},f^{*}\right).$
Together with \eqref{eq:KM_Even}, we conclude that 
\[
N\left(g^{*},f^{*}\right)\le N\left(g^{*}\star\refl g^{*},f^{*}\cdot\refl f^{*}\right)\le C_{1}^{n}N\left(f\star\refl f,g\cdot\refl g\right)\le C_{2}^{n}N\left(f,g\right),
\]
and 
\[
N\left(f,g\right)\le N\left(f\star\refl f,g\cdot\refl g\right)\le C_{1}^{n}N\left(g^{*}\star\refl g^{*},f^{*}\cdot\refl f^{*}\right)\le C_{2}^{n}N\left(g^{*},f^{*}\right),
\]
as claimed. 
\end{proof}
\begin{rem}
Since \cite{AKM2005} an extensive effort has been applied to determining
understanding the operation of ``duality'' for functions and investigating
$f^{*}$ together with other possible definitions. In \cite{AM2010,AM4}
it was shown that on the class $\lcg$ of geometric log-concave functions
there are precisely {\em two} order reversing bijections. One of
them is $e^{-\varphi}\mapsto e^{-\L\varphi}$, and the second, which
we shall denote by $f\mapsto f^{\circ}$, is less well known, and
is given by the formula $f=e^{-\varphi}\mapsto e^{-\A\varphi}$ where
\begin{equation}
\left(\A\varphi\right)(x)=\left\{ \begin{array}{ll}
\sup_{\{y\in\R^{n}:\varphi(y)>0\}}\frac{\langle x,y\rangle-1}{\varphi(y)} & \mbox{if}~~x\in\{\varphi(y)=0\}^{\circ}\\
+\infty & \mbox{if}~~x\not\in\{\varphi(y)=0\}^{\circ}
\end{array}\right.\ \label{newt}
\end{equation}
(with the convention $\sup\emptyset=0$.) For a detailed description
of this transform, geometric interpretations, properties and more,
see \cite{AM4}. It turns out that a result similar to Theorem \ref{thm:Func_Konig_Milman}
does not hold when replacing the Legendre-based duality with the polarity
transform. However, by slightly altering the polarity transform, one
can prove another Santaló-type inequality and its reverse, which leads
to a corresponding functional extension of Theorem \ref{thm:Func_Konig_Milman}.
These results will be stated in a precise form and proved in the forthcoming
\cite{Slo17}. 
\end{rem}

\subsection{\label{subsec:M_pos_equiv}The equivalence of the covering and volumetric
$M$-positions }

In this section we give two definitions for functional $M$-position
and show that they are equivalent. In particular, we will get that
in $M$-position we have a family of replacement-by-gaussians inequalities
which, we will see in the next section, are equivalent to Theorem
\ref{thm:KM_inv-BM}. To distinguish the two definitions, at least
until we show they are equivalent, we call the first volume-$M$-position
and the second covering-$M$-position. Denote $g_{0}(x)=\exp(-\frac{1}{2}|x|^{2})$,
so that $\int g_{0}=(2\pi)^{n/2}$. In general, for a positive definite
matrix $A$ let $g_{A}\left(x\right)=\exp(-\frac{1}{2}\iprod{Ax}x)$,
so that $\int g_{A}=\frac{(2\pi)^{n/2}}{\det^{1/2}A}$ and $g_{Id}=g_{0}$. 
\begin{defn}
\label{def:volMpos}Let $f\in\lcg$ and $C>0$. We say that $f$ is
in volume-$M$-position with constant $C$ if $\int f=(2\pi)^{n/2}$
and for every $h\in\lcg$, 
\[
\frac{1}{C^{n}}\int g_{0}\star h\le\int f\star h\le C^{n}\int g_{0}\star h
\]
 and 
\[
\frac{1}{C^{n}}\int g_{0}\star h\le\int f^{*}\star h\le C^{n}\int g_{0}\star h.
\]

For general $f$, if for $T_{f}\in GL_{n}$ we have that $\widetilde{f}=f\circ T_{f}$
is in volume-$M$-position with constant $C$, then we say that $g_{A}=g_{0}\circ T_{f}^{-1}$
is a volume-$M$-ellipsoid of $f$ with constant $C$. 
\end{defn}
\begin{rem}
\label{rem:M_pos_dual_vol}If $f$ is in volume-$M$-position, then
$f^{*}$ is not necessarily in volume-$M$-position as well, since
$\int f^{*}$ might not (and actually unless $f$ is gaussian, never
will) equal $(2\pi)^{n/2}$. However, the Blashcke-Santaló inequality
for functions \cite{AKM2005} states that for a log-concave function
with center of mass at the origin one has
\[
\int f\int f^{*}\le(2\pi)^{n}
\]
and thus if $f$ is centered and in volume-$M$-position then $\int f^{*}\le(2\pi)^{n/2}$.
Moreover, the reverse Blashcke-Santaló inequality for functions \cite{KM05}
states that there is a universal constant $C_{0}$ so that for a log-concave
function

\[
C_{0}^{n}\le\int f\int f^{*}
\]
and so we actually get that if $f$ is in volume-$M$-position with
constant $C$ then $f^{*}(cx)$ (for the normalizing $c$, which is
bounded between two universal constants) is in volume-$M$-position
with constant $C/c$. 
\end{rem}
Another possible definition for $M$-position is using covering numbers. 
\begin{defn}
\label{def:covMpos}Let $f\in\lcg$ and $C>0$. We say that $f$ is
in covering-$M$-position with constant $C$ if $\int f=(2\pi)^{n/2}$
and 
\[
N\left(f,g_{0}\right),N\left(g_{0},f\right),N\left(f^{*},g_{0}\right),N\left(g_{0},f^{*}\right)\le C^{n}.
\]
Again if there exists some $T_{f}\in GL_{n}$ such that $\widetilde{f}=f\circ T_{f}$
is in covering-$M$-position with constant $C$, we say that $g_{A}=g_{0}\circ T_{f}^{-1}$
is a covering-$M$-ellipsoid of $f$ with constant $C$. 
\end{defn}
\begin{rem}
\label{rem:M_pos_dual_cov}Again, if $f$ is in covering-$M$-position,
then $f^{*}$ is not necessarily in covering-$M$-position due to
normalization, but by the Blashcke-Santaló inequality for functions
and its reverse, $f^{*}(cx)$ (for the normalizing $c>0$) is in covering-$M$-position
with constant $2C/c$ since by our volume bounds,
\begin{align*}
N\left(f^{*}(cx),g_{0}(x)\right) & \le N\left(f^{*}(cx),f(x)\right)N\left(f(x),g_{0}(x)\right)\le\frac{\int f\left(cx\right)\star f\left(x\right)}{\int f^{2}\left(x\right)}C^{n}\\
 & \le\frac{\int f\left(cx\right)\star f\left(cx\right)}{\int f^{2}\left(x\right)}C^{n}=\frac{\int f^{2}\left(cx/2\right)}{\int f^{2}\left(x\right)}C^{n}=\left(\frac{2C}{c}\right)^{n}.
\end{align*}
\end{rem}
We claim that the two definitions coincide, up to a loss in the constants.
In other words
\begin{thm}
\label{thm:M_pos_equiv}For every $C>0$ there exists $C_{1}(C)>0$
such that if $f\in\lcg$ is in covering-$M$-position with constant
$C$ then it is in volume-$M$-position with constant $C_{1}$, and
if $f\in\lcg$ is in volume-$M$-position with constant $C$ then
it is in covering-$M$-position with constant $C_{1}$.
\end{thm}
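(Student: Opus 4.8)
The plan is to prove the two implications separately, using the volume bounds of Section \ref{sec:Volume-bounds} (Theorems \ref{thm:Vol_general} and \ref{thm:Vol_even}) as the dictionary that converts between covering numbers and ratios of integrals of the form $\int f\star h/\int f^2$ and $\int f h/\int h^2$. Throughout one freely uses that every $h\in\lcg$ satisfies $\int h^2\le\int h\le 2^n\int h^2$ (the inequality \eqref{eq:Func_KM_square} proved inside the proof of Theorem \ref{thm:Func_Konig_Milman}), and the basic identities $\|f\star h\|$-type relations together with $g_0\star g_0(x)=\exp(-|x|^2/4)$, so that $\int g_0\star g_0 = (4\pi)^{n/2}=2^n\int g_0$. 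Because both definitions require the same normalization $\int f=(2\pi)^{n/2}$, that part is automatic and only the estimates need attention.

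\textbf{Covering $\Rightarrow$ volume.} Assume $N(f,g_0),N(g_0,f)\le C^n$ (and the same for $f^*$). For the upper bound $\int f\star h\le C_1^n\int g_0\star h$: by Fact \ref{fact:sub_mult} (sub-multiplicativity, applied with $h_1=h_2=1$), $N(f\star h, g_0\star h)\le N(f,g_0)N(h,h)\le C^n$, since $N(h,h)\le h(0)=1$. Then the left-hand volume bound of Theorem \ref{thm:Vol_general} (valid for all functions, in particular $p\to1^+$ form, or directly the $\int f\le \mu(\R^n)\int g$ argument) gives $\int f\star h \le N(f\star h,g_0\star h)\int g_0\star h \le C^n\int g_0\star h$ — I should be slightly careful that $f\star h$ and $g_0\star h$ need not be in $\lcg$ (the max may exceed $1$), but the lower volume bound $\int u/\int v\le N(u,v)$ holds for arbitrary nonnegative $u,v$ as remarked after Theorem \ref{thm:Vol_general}, so this is fine after normalizing, or one applies it to suitable rescalings. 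For the reverse inequality $\int g_0\star h\le C_1^n\int f\star h$ one runs the symmetric argument using $N(g_0,f)\le C^n$: $N(g_0\star h, f\star h)\le N(g_0,f)\le C^n$, hence $\int g_0\star h\le C^n\int f\star h$. The statements for $f^*$ in place of $f$ are identical, using $N(f^*,g_0),N(g_0,f^*)\le C^n$.

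\textbf{Volume $\Rightarrow$ covering.} Assume the four volume inequalities. To bound $N(f,g_0)$: apply Theorem \ref{thm:Vol_even} to get $N(f,g_0)\le 2^n\int f^2/\|f\star g_0\|_\infty$ — wait, more precisely the right-hand bound there is in terms of $\|f*g_0\|_\infty$; the cleaner route is to use the lower/upper volume bounds that involve $f\star g_0$ directly. Using the left inequality of Theorem \ref{thm:Vol_even}, $N(f,g_0)\ge \int f^2/\|f*g_{0,-}\|_\infty$; for the upper bound I instead invoke Theorem \ref{thm:Vol_general} with $p=2$: $N(f,g_0)\le \int(f\star g_0)/\int g_0^2$. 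Now the volume hypothesis with $h=g_0$ gives $\int f\star g_0\le C^n\int g_0\star g_0 = 2^nC^n\int g_0$, and $\int g_0^2 = \int g_0/2^n$, so $N(f,g_0)\le \int(f\star g_0)/\int g_0^2 \le 2^nC^n\cdot 2^n = 4^nC^n$. Symmetrically, $N(g_0,f)\le \int(g_0\star f)/\int f^2$, and using the lower volume hypothesis $\int f\star g_0\ge C^{-n}\int g_0\star g_0$... no — here I want an \emph{upper} bound on $N(g_0,f)$, so I need an upper bound on $\int g_0\star f$ and a lower bound on $\int f^2$; the former is the hypothesis $\int f\star g_0\le C^n\int g_0\star g_0=2^nC^n\int g_0=2^nC^n(2\pi)^{n/2}$, and $\int f^2\ge \int f/2^n$... no, that inequality goes the wrong way ($\int f^2\le\int f$). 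Instead $\int f^2\ge 2^{-n}\int f = 2^{-n}(2\pi)^{n/2}$ \emph{is} correct since $\int f\le 2^n\int f^2$. So $N(g_0,f)\le 2^nC^n(2\pi)^{n/2}/(2^{-n}(2\pi)^{n/2})=4^nC^n$. The same computation with $f^*$ replacing $f$ handles $N(f^*,g_0)$ and $N(g_0,f^*)$, using that $\int f^*\ge C_0^n$ (reverse Santaló, Remark \ref{rem:M_pos_dual_vol}) to control normalization discrepancies. Collecting, $C_1:=$ a fixed multiple of $C$ (e.g. $4C$, with extra universal factors absorbed) works in both directions.

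\textbf{Main obstacle.} The one genuinely delicate point is that $f\star h$, $g_0\star h$, $f\star g_0$ etc. are generally \emph{not} normalized (their maximum is $1$ but $\int$ is not $(2\pi)^{n/2}$), and some of them are not in $\lcg$; so one must be careful to only invoke the left-hand volume bound $\int u/\int v\le N(u,v)$ in the generality where it holds (all nonnegative measurable functions, per the remark following Theorem \ref{thm:Vol_general}), and to invoke Theorems \ref{thm:Vol_even} and \ref{thm:Vol_general}'s \emph{right}-hand bounds only for genuine $\lcg$ functions — which $f$, $g_0$, $f^*$ are. The bookkeeping of universal constants (tracking the $2^n$, $4^n$, $C_0^{-n}$ factors) is routine but must be done once cleanly; the resulting $C_1(C)$ is explicit, of the form $C_1 = \kappa C$ for a universal $\kappa$, which is stronger than the stated existential claim.
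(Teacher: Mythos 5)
Your argument is correct and follows essentially the same route as the paper: volume-to-covering via Theorem \ref{thm:Vol_general} with $p=2$ together with $\int g_{0}\star g_{0}/\int g_{0}^{2}=2^{n}$ and $\int f\le 2^{n}\int f^{2}$, and covering-to-volume via the monotonicity $N(f\star h,g_{0}\star h)\le N(f,g_{0})$ combined with the general bound $\int u\le N(u,v)\int v$ (the paper phrases this directly through a covering measure and inequality \eqref{eq:Sup-conv_Conv_ineq}). Only cosmetic slips: the sup-convolution monotonicity you invoke is Fact \ref{fact:two-convs} (or Fact \ref{fact:Addition_two_fcns}), not Fact \ref{fact:sub_mult}, and $\int g_{0}\star g_{0}=(4\pi)^{n/2}=2^{n/2}\int g_{0}$ while $\int g_{0}^{2}=2^{-n/2}\int g_{0}$ (their ratio $2^{n}$ is what matters), which only changes the universal constants.
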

\begin{proof}
Assume that $f$ is in volume-$M$-position with constant $C$. Then
by the volume inequality of Theorem \ref{thm:Vol_general} with $p=2$
we have that 
\[
N\left(f,g_{0}\right)\le\frac{\int f\star g_{0}}{\int g_{0}^{2}}\le C^{n}\frac{\int g_{0}\star g_{0}}{\int g_{0}^{2}}=\left(2C\right)^{n},
\]
and (using the inequality again, together with the fact that for geometric
log-concave functions $f^{2}\left(x/2\right)\ge f\left(x\right)$)
\begin{align*}
N\left(g_{0},f\right) & \le\frac{\int g_{0}\star f}{\int f^{2}}=2^{n}\frac{\int g_{0}\star f}{\int f^{2}\left(x/2\right)dx}\le\left(2C\right)^{n}\frac{\int g_{0}\star g_{0}}{\int f^{2}\left(x/2\right)dx}\\
 & =\left(4C\right)^{n}\frac{\int g_{0}^{2}\left(x\right)dx}{\int f^{2}\left(x/2\right)dx}\le\left(4C\right)^{n}\frac{\int g_{0}\left(x\right)dx}{\int f\left(x\right)dx}=\left(4C\right)^{n}
\end{align*}
The necessary bounds for $N\left(f^{*},g_{0}\right)$ and $N\left(g_{0},f^{*}\right)$
are obtained similarly, or by using Remark \ref{rem:M_pos_dual_vol}
which states that after normalization $f^{*}$ is in volume-$M$-position
too, and the normalizing constant is bounded by some $C_{1}^{n}$,
so it influences the estimates by at most some $C_{2}^{n}$.

We turn now to the other implication. Let $\mu$ be a covering measure
of $f$ by $g_{0}$, and $\nu$ a covering measure of $f^{*}$ by
$g_{0}$. Then using \eqref{eq:Sup-conv_Conv_ineq}, we have that
\[
\int f\star h\le\int\left(\mu*g_{0}\right)\star h\le\int\mu*\left(g_{0}\star h\right)=\mu\left(\R^{n}\right)\int g_{0}\star h\le C^{n}\int g_{0}\star h,
\]
and, similarly, 
\[
\int f^{*}\star h\le\int\nu*\left(g_{0}\star h\right)\le C^{n}\int g_{0}\star h.
\]
 As for the reverse inequality, let $\nu$ be a covering measure of
$g_{0}$ by $f$, and $\mu$ a covering measure of $g_{0}$ by $f^{*}$.
Then 
\[
\int g_{0}\star h\le\int\left(\nu*f\right)\star h\le\int\nu*\left(f\star h\right)\le C^{n}\int f\star h,
\]
and, similarly, 
\[
\int g_{0}\star h\le\int\mu*\left(f^{*}\star h\right)\le C^{n}\int f^{*}\star h.
\]
\end{proof}

\subsection{\label{subsec:M-pos_existence}Existence of functional $M$-position}

In this section we prove that every centered geometric log-concave
function admits an $M$-position with a universal $C$. We shall be
using Theorem \ref{thm:KM_inv-BM} which is for even functions. In
order to be able, in the proof, to take care of non-even functions
as well, we shall need the following lemma.
\begin{lem}
\label{lem:replace_by_even}Let $f\in\lcg$ with $\bary f=0$. Then
for every $g\in\lcg$, we have 
\[
N\left(f,g\right)\sim N\left(f\refl f,g\right)\sim N\left(f\star\refl f,g\right),
\]
and 
\[
N\left(g,f\right)\sim N\left(g,f\refl f\right)\sim N\left(g,f\star\refl f\right)
\]
\end{lem}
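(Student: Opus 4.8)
The plan is to establish the chain of equivalences by sandwiching each of the quantities between the others using the sub-multiplicativity of covering numbers (Fact \ref{fact:sub_mult}), monotonicity (Fact \ref{fact:cov_mono}), the volume bounds of Theorems \ref{thm:Vol_general} and \ref{thm:Vol_even}, and the Rogers-Shephard-type inequality \eqref{eq:RS_Alonso} $\int h\star \refl h\le 4^n\int h$ together with the Santal\'o inequality and its reverse. The guiding observation is that $f\cdot\refl f\le f$ and $f\le f\star\refl f$ pointwise (the latter because $f(x)=f(x/2+x/2)\le \sup_z f(z)f(x-z)$ after symmetrizing, or more directly since $\refl f(0)=f(0)=1$), so monotonicity of covering in its arguments already gives half of each inequality essentially for free; the work is in the reverse directions, where one pays a factor $C^n$.

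First I would handle $N(f,g)\sim N(f\star\refl f,g)$. The easy direction $N(f,g)\le N(f\star\refl f,g)$ follows from $f\le f\star\refl f$ and Fact \ref{fact:cov_mono}. For the reverse, write $N(f\star\refl f,g)\le N(f\star\refl f,f)\,N(f,g)$ by Fact \ref{fact:sub_mult}, and then bound $N(f\star\refl f,f)\le C^n$: using monotonicity $N(f\star\refl f,f)\le N(f\star\refl f,f\cdot\refl f)$, then Theorem \ref{thm:Vol_even} gives $N(f\star\refl f,f\cdot\refl f)\le 2^n \frac{\int (f\star\refl f)^2}{\int (f\star\refl f)(f\cdot\refl f)}$, and one controls this ratio by $\int f\star\refl f \le 4^n\int f$ from \eqref{eq:RS_Alonso} in the numerator (using $(f\star\refl f)^2\le f\star\refl f$ since the function is $\le 1$) and $(f\star\refl f)(f\cdot\refl f)\ge (f\cdot\refl f)^2\ge$ (a constant power of) $f\cdot\refl f$ in the denominator, finally invoking Santal\'o/reverse-Santal\'o to compare $\int f\cdot\refl f$ with $\int f \int f^*$ and hence with a universal constant times $(\int f)^2$ — here $\bary f=0$ is exactly what is needed to apply the functional Santal\'o inequality. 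The middle term $N(f\refl f,g)$ is sandwiched between the other two since $f\cdot\refl f\le f\le f\star\refl f$, so $N(f\cdot\refl f,g)\le N(f,g)\le N(f\star\refl f,g)\le C^n N(f\cdot\refl f,g)$ once the two outer quantities are shown equivalent; to close the loop one shows $N(f\star\refl f,g)\le C^n N(f\cdot\refl f,g)$ directly by $N(f\star\refl f,g)\le N(f\star\refl f,f\cdot\refl f)N(f\cdot\refl f,g)$ with the first factor bounded as above.

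The second chain, $N(g,f)\sim N(g,f\cdot\refl f)\sim N(g,f\star\refl f)$, is dual: now the trivial directions go the other way — $N(g,f\star\refl f)\le N(g,f)\le N(g,f\cdot\refl f)$ by monotonicity in the second argument — and for the reverse one writes, e.g., $N(g,f\cdot\refl f)\le N(g,f)\,N(f,f\cdot\refl f)$ and bounds $N(f,f\cdot\refl f)\le C^n$ by the same volume-bound-plus-Rogers-Shephard-plus-Santal\'o argument, and similarly $N(g,f)\le N(g,f\star\refl f)N(f\star\refl f,f)$. I expect the main obstacle to be the bookkeeping of the volume-ratio estimates: repeatedly one has ratios like $\frac{\int(f\star\refl f)^2}{\int (f\star\refl f)(f\cdot\refl f)}$ that must be squeezed between universal-constant powers, and each squeeze uses a different combination of $h^2\le h$, log-concavity ($h(x)\le h^2(x/2)$), \eqref{eq:RS_Alonso}, and Santal\'o; keeping the direction of each inequality straight and confirming that the centering hypothesis $\bary f=0$ is invoked wherever Santal\'o is used (and only there) is the delicate point. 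All constants are universal and absorbed into the $\sim$ notation.
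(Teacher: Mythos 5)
Your plan is correct and follows the same skeleton as the paper's proof: the pointwise sandwich $f\refl f\le f\le f\star\refl f$ plus monotonicity (Fact \ref{fact:cov_mono}) gives the easy directions in both chains, and sub-multiplicativity (Fact \ref{fact:sub_mult}) reduces everything to the two pivotal bounds $N\left(f,f\refl f\right)\le C^{n}$ and $N\left(f\star\refl f,f\right)\le C^{n}$. Where you genuinely differ is in the second of these: the paper obtains $N\left(f\star\refl f,f\right)\le C^{n}$ by invoking the functional K\"onig--Milman duality (Theorem \ref{thm:Func_Konig_Milman}) to pass to $N\left(f^{*},f^{*}\refl{f^{*}}\right)$ and then reusing the bound \eqref{eq:N_ffbar} with $f^{*}$ in place of $f$, whereas you use monotonicity to pass to $N\left(f\star\refl f,f\refl f\right)$ and bound that directly by the even-case volume bound of Theorem \ref{thm:Vol_even}, together with $\int\left(f\star\refl f\right)^{2}\le\int f\star\refl f\le4^{n}\int f$ (Rogers--Shephard), $\int\left(f\star\refl f\right)\left(f\refl f\right)\ge\int\left(f\refl f\right)^{2}\ge2^{-n}\int f\refl f$, and finally $\int f\refl f\ge c^{n}\int f$, which follows from the reverse Santal\'o inequality applied to $f\refl f$ (whose dual is $f^{*}\star\refl{f^{*}}$), Rogers--Shephard for $f^{*}$, and the Santal\'o inequality for the centered $f$. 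This route checks out and is slightly more self-contained, since it avoids Theorem \ref{thm:Func_Konig_Milman} altogether (whose own proof in the non-even case runs essentially the same volume computation); the paper's route, by contrast, recycles \eqref{eq:N_ffbar} through duality. Two small points to fix in the write-up: your first justification of $f\le f\star\refl f$ (via $f(x)=f(x/2+x/2)$) does not work as written, but the direct one (take $z=x$ and use $\refl f(0)=1$) does; and the phrase about comparing $\int f\refl f$ with ``a universal constant times $(\int f)^{2}$'' should simply read $\int f\refl f\ge c^{n}\int f$ --- the chain of inequalities you list delivers exactly that, and the centering hypothesis $\bary f=0$ enters only through the Santal\'o step, as you correctly note.
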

\begin{proof}
Firstly, since $f\refl f\le f\le f\star\refl f$, it follows that
$N\left(f\refl f,g\right)\le N\left(f,g\right)\le N\left(f\star\refl f,g\right)$.
Secondly, 
\begin{align}
N\left(f,ff_{-}\right) & \le\frac{\int f\star f\refl f}{\int\left(f\refl f\right)^{2}}\le2^{n}\frac{\int f\star\refl f}{\int f\refl f}\le8^{n}\frac{\int f}{c^{n}}\int f^{*}\star\refl{f^{*}}\label{eq:N_ffbar}\\
 & \le c_{1}^{n}\int f\int f^{*}\le c_{2}^{n},\nonumber 
\end{align}
and hence $N\left(f,g\right)\le N\left(f,f\refl f\right)N\left(f\refl f,g\right)\le c_{2}^{n}N\left(f\refl f,g\right).$
Thirdly, by Theorem \ref{thm:Func_Konig_Milman}, and  \eqref{eq:N_ffbar}
(where the roles of $f$ and $f^{*}$ are interchanged), it follows
that 
\begin{equation}
N\left(f\star\refl f,f\right)\le C^{n}N\left(f^{*},f^{*}\refl f^{*}\right)\le C_{1}^{n}\label{eq:N_fStar_fbar}
\end{equation}
and hence $N\left(f,g\right)\ge\frac{N\left(f\star\refl f,g\right)}{N\left(f\star\refl f,f\right)}\ge C_{1}^{-n}N\left(f\star\refl f,g\right)$. 

\noindent To conclude the above, we have $N\left(f,g\right)\sim N\left(f\refl f,g\right)\sim N\left(f\star\refl f,g\right)$.

Next, we show that $N\left(g,f\right)\sim N\left(g,ff_{-}\right)\sim N\left(g,f\star\refl f\right)$.
Note that 
\[
N\left(g,f\star\refl f\right)\le N\left(g,f\right)\le N\left(g,f\refl f\right)
\]
Moreover, by \eqref{eq:N_ffbar} and \eqref{eq:N_fStar_fbar}, it
follows that
\[
N\left(g,f\right)\le N\left(g,f\star\refl f\right)N\left(f\star\refl f,f\right)\le C^{n}N\left(g,f\star\refl f\right),
\]
and 
\[
N\left(g,f\right)\ge\frac{N\left(g,f\refl f\right)}{N\left(f,f\refl f\right)}\ge N\left(g,f\refl f\right)C^{-n}.
\]

The proof is thus complete.
\end{proof}
We next show how the reverse Brunn-Minkowski inequality for even functions
from \cite{KM05}, quoted as Theorem \ref{thm:KM_inv-BM} above, implies
the existence of an $M$-position for every geometric log-concave
function which is centered. 
\begin{prop}
\label{prop:existsMpos}There exists a universal constant $C>0$ such
that any centered geometric log-concave function $f$ admits a functional
$M$-position (as in Definition \ref{def:covMpos} or Definition \ref{def:volMpos}). 
\end{prop}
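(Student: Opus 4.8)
The plan is to reduce the general (possibly non-even) case to the even case handled by Theorem~\ref{thm:KM_inv-BM}, using the covering-number machinery developed above, and in particular Lemma~\ref{lem:replace_by_even} and the functional K\"onig--Milman result Theorem~\ref{thm:Func_Konig_Milman}. First I would recall that by Theorem~\ref{thm:M_pos_equiv} it suffices to produce, for a centered $f\in\lcg$, a matrix $T_f\in GL_n$ such that $\widetilde f=f\circ T_f$ satisfies $\int\widetilde f=(2\pi)^{n/2}$ and all four covering numbers $N(\widetilde f,g_0),N(g_0,\widetilde f),N(\widetilde f^{\,*},g_0),N(g_0,\widetilde f^{\,*})$ are at most $C^n$ for a universal $C$; equivalently, by Theorem~\ref{thm:Vol_even} and \eqref{eq:Func_KM_square}, it suffices to control the volume ratios $\int f\star h/\int g_0\star h$ and $\int g_0\star h/\int f\star h$ (and the same with $f^{*}$) uniformly over $h\in\lcg$.

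Next I would treat the even case directly: if $f$ is even, apply Theorem~\ref{thm:KM_inv-BM} to obtain $T_f\in SL_n$ so that $\widetilde f=f\circ T_f$ satisfies the reverse Brunn--Minkowski inequality against all even $h$; rescaling so that $\int\widetilde f=(2\pi)^{n/2}$ (this only changes $T_f$ by a scalar multiple and costs a universal constant), one gets $\big(\int\widetilde f\star h\big)^{1/n}\le C\big[(\int\widetilde f)^{1/n}+(\int h)^{1/n}\big]$ for even $h$. Specializing $h=g_0$ and $h=g_0^{\,*}=g_0$, together with the standard fact that in the Klartag--Milman setting the \emph{lower} bound $\int f\star h\ge c^n\int g_0\star h$ also follows (e.g.\ by applying the upper bound with the roles of $f$ and $g_0$ interchanged, using that the gaussian is its own $M$-position, or by the easy direction $\int f\star h\ge\max(\int f,\int h)\cdot$const), yields all four covering estimates via Theorem~\ref{thm:Vol_even}; the bounds for $\widetilde f^{\,*}$ follow from Remark~\ref{rem:M_pos_dual_vol}, i.e.\ the functional Blaschke--Santal\'o inequality and its reverse, which say $f^{*}(cx)$ is again in volume-$M$-position after a universally bounded rescaling.

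For the non-even case, the idea is to symmetrize. Given a centered $f\in\lcg$, form $f\star\refl f$ (or $f\cdot\refl f$), which is even, and apply the even case to it to find a position $T$ making $(f\star\refl f)\circ T$ an $M$-position with universal constant. I would then transport this back to $f$ using Lemma~\ref{lem:replace_by_even}, which gives $N(f,g)\sim N(f\star\refl f,g)$ and $N(g,f)\sim N(g,f\star\refl f)$ for every $g\in\lcg$ --- in particular for $g=g_0\circ T^{-1}$ --- together with the analogous equivalence applied to $f^{*}$ (combined with the Rogers--Shephard-type inequality \eqref{eq:RS_Alonso} to control $\int f^{*}\star\refl{f^{*}}$ against $\int f^{*}$, as in the proof of that lemma). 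The key point is that $T_f:=T$ simultaneously puts $f$ (not just its symmetrization) in covering-$M$-position up to a universal loss, because all four covering numbers for $f$ against $g_0\circ T^{-1}$ are comparable to the corresponding ones for $f\star\refl f$, which are under control; a final scalar normalization arranges $\int(f\circ T_f)=(2\pi)^{n/2}$.

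The main obstacle I anticipate is bookkeeping the interaction between normalization and the Legendre dual: the constraint $\int\widetilde f=(2\pi)^{n/2}$ is not preserved under $f\mapsto f^{*}$, so one must repeatedly invoke the functional Santal\'o inequality and its reverse (valid precisely because $f$ is \emph{centered}, which is where that hypothesis is used) to argue that $\int\widetilde f^{\,*}$ is within universal constants of $(2\pi)^{n/2}$ and hence that controlling covering numbers of $\widetilde f^{\,*}$ against $g_0$ is legitimate up to adjusting $C$. A secondary technical point is ensuring that the symmetrization step does not destroy the centering or the finiteness/full-dimensionality hypotheses needed for \eqref{eq:RS_Alonso} and Theorem~\ref{thm:Func_Konig_Milman}; since $f\star\refl f$ is automatically even (hence centered) and has full-dimensional support whenever $f$ does, this should go through, but it must be checked. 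Once these normalization issues are dispatched, the proof is essentially an assembly of Theorems~\ref{thm:KM_inv-BM}, \ref{thm:Func_Konig_Milman}, \ref{thm:Vol_even}, \ref{thm:M_pos_equiv} and Lemma~\ref{lem:replace_by_even}.
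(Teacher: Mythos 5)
Your proposal is correct and follows essentially the same route as the paper: the even case via Theorem \ref{thm:KM_inv-BM} applied with a gaussian of matching integral, converted to covering bounds by the volume estimates (the relevant one is Theorem \ref{thm:Vol_general} with $p=2$ rather than Theorem \ref{thm:Vol_even}, a harmless citation slip), and the centered non-even case by symmetrizing, applying the even case, and transporting back through Lemma \ref{lem:replace_by_even} with the dual bounds supplied by Theorem \ref{thm:Func_Konig_Milman} (or equivalently the Santal\'o-based Remarks \ref{rem:M_pos_dual_vol}--\ref{rem:M_pos_dual_cov}). The normalization issues you flag are real but are handled exactly as you suggest, so no further changes are needed.
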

\begin{proof}
Assume first that $f$ is even. Let $g_{f}\left(x\right)=g_{0}\left(x/r\right)$
be the scaled standard gaussian such that $\int g_{f}=\int f$. By
the functional reverse Brunn-Minkowski inequality, there exists $T_{f}\in{\rm SL_{n}}$
such that for $\widetilde{f}=f\circ T_{f}$, 
\[
\int\widetilde{f}\star g_{f}\le C^{n}\left(\left(\int\widetilde{f}\right)^{1/n}+\left(\int g_{f}\right)^{1/n}\right)^{n}=\left(2C\right)^{n}\int g_{f}.
\]
Therefore, 
\[
N\left(\widetilde{f},g_{f}\right)\le\frac{\int\widetilde{f}\star g_{f}}{\int g_{f}^{2}}\le\left(2C\right)^{n}\frac{\int\widetilde{f}}{\int g_{f}^{2}}\le\left(4C\right)^{n}\frac{\int g_{f}}{\int g_{f}}=\left(4C\right)^{n}.
\]
Similarly, one shows that $N\left(g_{f},\widetilde{f}\right)\le\left(4C\right)^{n}$. 

Using Theorem \ref{thm:Func_Konig_Milman}, one similarly shows that
$N\left(\widetilde{f}^{*},g_{f}^{*}\right),N\left(g_{f}^{*},\widetilde{f}^{*}\right)\le C{}_{1}^{n}$.
By Fact \ref{fact:Cov_Under_Lin}, if $g_{0}=g_{f}\circ T$ then $\hat{f}=\widetilde{f}\circ T$
is in $M$-position.

Next, assume that $f$ is not even, but only centered at the origin.
By the first part of the proof, we can put the even function $f\cdot\refl f$
in $M$-position, which means that $N\left(f\refl f,g_{0}\right)\le C^{n}$,
and $N\left(g_{0},f\refl f\right)\le C^{n}$. By Lemma \ref{lem:replace_by_even},
on the one hand we have that 
\[
N\left(f,g_{0}\right)\sim N\left(f\refl f,g_{0}\right)\le C^{n},
\]
 and on the other hand, that 
\[
N\left(g_{0},f\right)\sim N\left(g_{0},f\star\refl f\right)\le N\left(g_{0},f\refl f\right)\le C^{n}.
\]

By Theorem \ref{thm:Func_Konig_Milman} we have 
\begin{align*}
N\left(f^{*},g_{0}\right) & \le N\left(g_{0},f\right)C^{n}\le C_{1}^{n},\\
N\left(g_{0},f^{*}\right) & \le N\left(f,g_{0}\right)C^{n}\le C_{1}^{n},
\end{align*}
which completes the proof.
\end{proof}
Finally, we show that if two functions are, up to normalization, in
functional $M$-position then they satisfy the functional reverse
Brunn-Minkowski inequality. 
\begin{prop}
\label{pro:impM} Suppose $f,h\in\lcg$ satisfy that $\tilde{f}\left(x\right)=f\left(x/z_{f}\right)$
and $\tilde{h}\left(x\right)=h\left(x/z_{h}\right)$ are in functional
$M$-position (where $z_{f}=\left((2\pi)^{n/2}/\int f\right)^{1/n}$
and $z_{h}=\left((2\pi)^{n/2}/\int h\right)^{1/n}$) with constant
$C>0.$ Then they satisfy
\[
\left(\int f\star h\right)^{1/n}\le C\left(\left(\int f\right)^{1/n}+\left(\int h\right)^{1/n}\right).
\]
\end{prop}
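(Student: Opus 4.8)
The plan is to convert the $M$-position of $\tilde f$ and of $\tilde h$ into two-sided comparisons of $f$ (resp.\ $h$) with an appropriately scaled Gaussian, then apply those comparisons to $f\star h$, and finish with the elementary identity for the sup-convolution of two Gaussians. First I would record the scaling dictionary: since $\tilde f=f\circ T_f$ with $T_f=z_f^{-1}\mathrm{Id}$, and since composition with a linear map commutes with sup-convolution, $(u\circ S)\star(v\circ S)=(u\star v)\circ S$, while multiplying the integral by $|\det S|^{-1}$, the volume-$M$-position of $\tilde f$ (which we may invoke by Theorem~\ref{thm:M_pos_equiv}, after possibly enlarging $C$) is equivalent to
\[
C^{-n}\!\int g_{A_f}\star k\ \le\ \int f\star k\ \le\ C^{n}\!\int g_{A_f}\star k\qquad\text{for every }k\in\lcg,
\]
where $A_f=z_f^{2}\,\mathrm{Id}$ and $g_{A_f}=g_0\circ T_f^{-1}$ is precisely the volume-$M$-ellipsoid of $f$; note $\int g_{A_f}=(2\pi)^{n/2}z_f^{-n}=\int f$. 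The same two-sided comparison holds for $h$ with $A_h=z_h^{2}\,\mathrm{Id}$. (To get this one uses, for a test function $k'\in\lcg$, the admissible choice $k=k'\circ T_f$ in the $\tilde f$-inequality, together with $\tilde f\star(k'\circ T_f)=(f\star k')\circ T_f$ and $g_0=g_{A_f}\circ T_f$, and then cancels the common Jacobian factor $z_f^{n}$.)

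Next I would use only the right-hand comparisons: apply the one for $f$ with test function $k=h\in\lcg$, and then the one for $h$ with test function $k=g_{A_f}\in\lcg$, to obtain
\[
\int f\star h\ \le\ C^{n}\!\int g_{A_f}\star h\ \le\ C^{2n}\!\int g_{A_f}\star g_{A_h}.
\]
(Equivalently one could route through covering numbers: covering-$M$-position together with Fact~\ref{fact:Cov_Under_Lin} gives $N(f,g_{A_f}),N(h,g_{A_h})\le C^n$; Fact~\ref{fact:Addition_two_fcns} with weight $\equiv1$ gives $N(f\star h,\,g_{A_f}\star g_{A_h})\le C^{2n}$; and the lower volume bound in Theorem~\ref{thm:Vol_general} converts this into the same estimate.) It then remains to evaluate $\int g_{A_f}\star g_{A_h}$. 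A direct maximization of the exponent shows $g_{aI}\star g_{bI}=g_{cI}$ with $1/c=1/a+1/b$, so $g_{A_f}\star g_{A_h}=g_{cI}$ with $1/c=z_f^{-2}+z_h^{-2}$. Since $z_f^{-2}=(\int f)^{2/n}/(2\pi)$ and $z_h^{-2}=(\int h)^{2/n}/(2\pi)$, and $\int g_{cI}=(2\pi/c)^{n/2}$, we get $\int g_{A_f}\star g_{A_h}=\big((\int f)^{2/n}+(\int h)^{2/n}\big)^{n/2}$. Combining, $\int f\star h\le C^{2n}\big((\int f)^{2/n}+(\int h)^{2/n}\big)^{n/2}$; taking $n$th roots and using $\sqrt{s^2+t^2}\le s+t$ for $s,t\ge 0$ yields $(\int f\star h)^{1/n}\le C^{2}\big((\int f)^{1/n}+(\int h)^{1/n}\big)$, which is the asserted inequality after renaming the universal constant.

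The only real obstacle is the bookkeeping in the first step: one must keep straight the direction of the dilation $T_f$ and the accompanying determinant factors, so that the $M$-position inequality for $\tilde f$ genuinely becomes a clean comparison of $f$ with the isotropic Gaussian $g_{A_f}$ having the \emph{same} integral as $f$ (and likewise for $h$). Once that dictionary is in place, the remainder is a two-line estimate plus the elementary Gaussian sup-convolution identity, with no analytic difficulty.
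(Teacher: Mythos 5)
Your proposal is correct and follows essentially the same route as the paper: translate the $M$-position of $\tilde f$ and $\tilde h$ into two-sided comparisons of $f$ and $h$ with the rescaled Gaussians $g_{A_f}=g_{1/z_f}$ and $g_{A_h}=g_{1/z_h}$ (which have the same integrals as $f$ and $h$), chain the two upper comparisons with test functions $h$ and $g_{A_f}$, and finish with the Gaussian sup-convolution identity $g_{r}\star g_{s}=g_{\sqrt{r^{2}+s^{2}}}$ and the elementary inequality $\bigl(s^{2}+t^{2}\bigr)^{1/2}\le s+t$. The resulting constant ($C^{2}$ rather than $C$) matches the paper's own bookkeeping, where universal constants are allowed to change from line to line.
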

In particular, Proposition \ref{pro:impM} together with Theorem \ref{thm:MpositionMAIN}
imply Theorem \ref{thm:KM_inv-BM}.
\begin{proof}
First, for $r>0$, let $g_{r}$ denote the standard gaussian scaled
by a factor by $r$, namely $g_{r}\left(x\right)=g_{0}\left(x/r\right)$.
Note that $\int g_{r}=r^{n}\int g_{0}$. Moreover, a simple calculation
tells us that for any $r,s$, 
\[
\left(g_{r}\star g_{s}\right)=\left(g_{r}^{*}g_{s}^{*}\right)^{*}=\left(g_{\frac{1}{r}}g_{\frac{1}{s}}\right)^{*}=\left(g_{\frac{1}{\sqrt{r^{2}+s^{2}}}}\right)^{*}=g_{\sqrt{r^{2}+s^{2}}}.
\]
By assumption (using the definition of volume-$M$-position), we have
that for any $\psi\in\lcg$,
\[
\int\tilde{f}\star\psi\sim C_{1}^{n}\int g_{0}\star\psi,\,\,\,\text{and\,\,\,}\int\tilde{h}\star\psi\sim C_{2}^{n}\int g_{0}\star\psi.
\]
By replacing $\psi$ with $\tilde{\psi}\left(x\right)=\psi\left(x/z_{f}\right)$,
and using the facts that  $\left(\tilde{f}\star\tilde{\psi}\right)\left(x\right)=\left(f\star\psi\right)\left(x/z_{f}\right)$
and $\left(g_{0}\star\tilde{\psi}\right)\left(x\right)=\left(g_{1/z_{f}}\star\psi\right)\left(x/z_{f}\right)$,
it follows that $\int f\star\psi\sim C_{1}^{n}\int g_{1/z_{f}}\star\psi$
for any $\psi\in\lcg$. Similarly, we have that $\int h\star\psi\sim C_{2}^{n}\int g_{1/z_{h}}\star\psi$
for any $\psi\in\lcg$. Therefore, 

\begin{align*}
\int f\star h & \le C_{1}^{n}\int g_{1/z_{f}}\star h\le\left(C_{1}C_{2}\right)^{n}\int g_{1/z_{f}}\star g_{1/z_{h}}\\
 & =\left(C_{1}C_{2}\right)^{n}\int g_{\sqrt{z_{f}^{-2}+z_{h}^{-2}}}=\left(2C\right)^{n}\left(z_{f}^{-2}+z_{h}^{-2}\right)^{\frac{n}{2}}\int g_{0}\\
 & =\left(C_{1}C_{2}\right)^{n}\left(\left(z_{f}^{-n}\int g_{0}\right)^{2/n}+\left(z_{h}^{-n}\int g_{0}\right)^{2/n}\right)^{n/2}\\
 & =\left(C_{1}C_{2}\right)^{n}\left(\left(\int f\right)^{2/n}+\left(\int h\right)^{2/n}\right)^{n/2}\\
 & \le\left(C_{1}C_{2}\right)^{n}\left(\left(\int f\right)^{1/n}+\left(\int h\right)^{1/n}\right)^{n}.
\end{align*}
\end{proof}

\subsection{\label{sec:direct_M_pos}Direct proof for covering $M$-position}

In this section we give a direct proof of the existence of an $M$-position
for a centered log-concave geometric convex function, based on the
geometric theorem of Milman on the existence of an $M$-position for
convex bodies. One may restrict to the case where $f\in LC_{g}(\R^{n})$
is even, since the centered and not-necessarily even case will then
follow by the reasoning given in the previous section. Let $f\in LC_{g}(\R^{n})$
be even and define 
\[
K_{f}=\{x:f(x)>\exp(-n)\},
\]
which is a centrally symmetric convex body. This body was used in
\cite{KM05} as well.
\begin{lem}
\label{lem: Direct_M_1} Let $f\in LC_{g}(\R^{n})$, then we have
that 
\[
C^{-n}\int f\le\vol(K_{f})\le C^{n}\int f
\]
for some universal constant $C>1$.
\end{lem}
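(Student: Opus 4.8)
The plan is to pass to the convex potential $\varphi=-\log f$, so that $\varphi\ge0$ (because $f\le1$), $\varphi(0)=0$, $\varphi$ is lower semicontinuous, and $K_f=\{x:\varphi(x)<n\}$. I would first note that $K_f$ is a genuine convex body: it is open since $\varphi$ is lower semicontinuous, it contains $0$, and it is bounded, since an unbounded convex set contains a ray, along which $f>e^{-n}$ would force $\int f=\infty$, contradicting $f\in\lcg$. The whole argument then rests on the layer-cake identity obtained from Fubini's theorem and the substitution $t=e^{-s}$ (using $0\le f\le1$):
\[
\int f=\int_0^\infty\vol(\{x:f(x)>t\})\,dt=\int_0^\infty\vol(\{\varphi<s\})\,e^{-s}\,ds .
\]
Thus everything reduces to comparing the level sets $\{\varphi<s\}$ with the single set $K_f$.

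The elementary input is that convexity together with $\varphi(0)=0$ gives $\varphi(\lambda x)\le\lambda\varphi(x)$ for $0\le\lambda\le1$. Hence for $0<s\le n$ one has $\frac{s}{n}K_f\subseteq\{\varphi<s\}\subseteq K_f$, while for $s\ge n$ one has $\frac{n}{s}\{\varphi<s\}\subseteq K_f$; taking volumes,
\[
(s/n)^n\vol(K_f)\le\vol(\{\varphi<s\})\le\vol(K_f)\ \ (0<s\le n),\qquad \vol(\{\varphi<s\})\le(s/n)^n\vol(K_f)\ \ (s\ge n).
\]

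For the lower bound on $\vol(K_f)$ I would feed the upper estimates for $\vol(\{\varphi<s\})$ into the layer-cake identity:
\[
\int f\le\vol(K_f)\Bigl(\int_0^n e^{-s}\,ds+n^{-n}\int_n^\infty s^n e^{-s}\,ds\Bigr)\le\vol(K_f)\bigl(1+n^{-n}\,n!\bigr)\le 2\,\vol(K_f),
\]
using $\int_0^\infty s^n e^{-s}\,ds=n!\le n^n$. For the upper bound I would keep only the range $0\le s\le n$ and use the lower estimate together with $e^{-s}\ge e^{-n}$ there:
\[
\int f\ge\vol(K_f)\,e^{-n}n^{-n}\int_0^n s^n\,ds=\vol(K_f)\,e^{-n}\,\frac{n}{n+1}\ge\frac12\,e^{-n}\,\vol(K_f).
\]
Combining the two displays, $\vol(K_f)\ge\frac12\int f\ge(2e)^{-n}\int f$ and $\vol(K_f)\le2e^n\int f\le(2e)^n\int f$, so the lemma holds with $C=2e$.

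There is no genuine obstacle here; the only points requiring care are the correct direction of the scaling inclusion above versus below the threshold $n$, and using the crude bounds $\int_0^\infty s^n e^{-s}\,ds=n!\le n^n$ and $\frac{n}{n+1}\ge\frac12$ so that the polynomial-in-$n$ factors get absorbed into an exponential constant. Any $C>1$ would do after such absorption, and $C=2e$ comes out directly.
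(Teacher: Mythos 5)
Your proof is correct and follows essentially the same route as the paper: the layer-cake identity $\int f=\int_0^\infty e^{-s}\vol(\{\varphi<s\})\,ds$ combined with the scaling property $\varphi(\lambda x)\le\lambda\varphi(x)$ to compare all sublevel sets with $K_f$, yielding the split at $s=n$. The only cosmetic difference is that where the paper obtains the bound $\vol(K_f)\le C^n\int f$ by integrating the explicit minimal cone function $f_0=e^{-n\|\cdot\|_{K_f}}$ over $[n,\infty)$, you use the equivalent inclusion $\frac{s}{n}K_f\subseteq\{\varphi<s\}$ on $[0,n]$ directly (and you additionally record the routine check that $K_f$ is a bounded convex body, which the paper takes for granted).
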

\begin{proof}
The smallest log-concave function with a given level set $K$ is $f_{0}=\exp(-\varphi_{0})$
where the epigraph of $\varphi_{0}$ is the convex hull of $\{0\}$
and the set $\{(x,r):x\in K,r\ge n\}\subset\R^{n+1}$. The integral
of this function is 
\begin{align*}
\int f_{0} & =\int_{0}^{\infty}e^{-t}\vol\{x:\varphi_{0}(x)\le t\}dt\\
 & =\int_{0}^{\infty}e^{-t}\left(\frac{t}{n}\right)^{n}\vol(K_{f_{0}})\,dt\ge\int_{n}^{\infty}e^{-t}\vol(K_{f_{0}})dt\ge\vol(K_{f_{0}})C^{-n}.
\end{align*}
for some universal $C>0$. Picking $K=K_{f}$, since $f_{0}\le f$
and they share the level set $K_{f}=K_{f_{0}}$, it follows that $\int f\ge C^{-n}\vol(K_{f})$
for any $f$. For the other direction, for any function $f=e^{-\varphi}$
we have that 
\begin{align*}
\int e^{-\varphi} & =\int_{0}^{\infty}e^{-t}\vol\{x:\varphi(x)\le t\}\,dt\\
 & \le\int_{0}^{n}e^{-t}\vol(K_{f})+\int_{n}^{\infty}e^{-t}\left(\frac{t}{n}\right)^{n}\vol(K_{f})\,dt\le\vol(K_{f})C^{n}
\end{align*}
for some universal $C>0$. 
\end{proof}
Recall that a convex body $K\subset\R^{n}$ is in $M$-position with
constant $C>0$ if it can be covered by $C^{n}$ copies of the Euclidean
ball $RB_{2}^{n}$ where $R=\left(\vol(K)/\vol(B_{2}^{n})\right)^{1/n}$.
(That is, $\vol(K)=\vol(RB_{2}^{n})$.) Under this condition, and
under the assumption that the center of mass of $K$ is at the origin,
it is well known that also $K^{\circ}$ is in $M$-position and that
$N(RB_{2}^{n},K)\le C_{1}^{n}$, $N(RK^{\circ},B_{2}^{n})\le C_{1}^{n}$
and $N(B_{2}^{n},RK^{\circ})\le C_{1}^{n}$ where $C_{1}$ depends
only on $C$. For these properties and more about $M$-position, see
\cite{AGM15}. 
\begin{lem}
\label{lem:Direct_M_2}Let $f\in LC_{g}(\R^{n})$ satisfy that $\int f=(2\pi)^{n/2}$
and that $K_{f}$ is in $M$-position. Then $N(f,g_{0})\le C^{n}$. 
\end{lem}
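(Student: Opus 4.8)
The plan is to transfer the covering bound from the body $K_f$ to the function $f$ by sandwiching both $f$ and $g_0$ between explicit log-concave functions built from $K_f$ and a Euclidean ball, and then using sub-multiplicativity (Fact \ref{fact:sub_mult}) together with the volume bounds of Theorem \ref{thm:Vol_general}. First I would record the basic geometric comparison: since $f \in LC_g(\R^n)$ and $K_f = \{f > e^{-n}\}$, on the set $K_f$ one has $f \ge$ (an explicit ``tent'' or exponential profile over $K_f$), and globally $f \le e^{-\varphi}$ where $\varphi$ is linear along rays and equals $n$ on $\partial K_f$; concretely one gets two-sided bounds of the form $c^n \one_{K_f}\text{-type function} \le f \le C^n$ (a scaled ``cone'' function over $K_f$), all with the same level set $K_f$, in the spirit of Lemma \ref{lem: Direct_M_1}. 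In particular there is a universal $a>0$ so that $f \ge e^{-n}\one_{K_f}\!\star(\text{something})$ is not needed; what I really want is $N(f, h_{K_f}) \le C^n$ and $N(h_{K_f}, f)\le C^n$ where $h_{K_f}$ is a fixed convenient log-concave function with level set $K_f$ (say $h_{K_f}(x) = \max(0, 1-\|x\|_{K_f})$ or $e^{-\|x\|_{K_f}}$ suitably normalized), which follows from the volume bounds of Theorem \ref{thm:Vol_even} since both functions have comparable integrals (Lemma \ref{lem: Direct_M_1}) and comparable $f*h$-type quantities, the latter being controlled because their level sets coincide up to the fixed constant $e^{-n}$.

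Next I would do the analogous thing for $g_0$ against the Euclidean ball: with $R$ chosen so that $\vol(RB_2^n) = \vol(K_f)$ (which, since $\int f = (2\pi)^{n/2} = \int g_0$ and Lemma \ref{lem: Direct_M_1}, means $R$ is comparable to $1$), one has $N(g_0, h_{RB_2^n}) \le C^n$ and $N(h_{RB_2^n}, g_0) \le C^n$, where $h_{RB_2^n}$ is the same type of fixed profile over $RB_2^n$; this is a purely one-body computation since the gaussian and any reasonable profile over a ball of radius $\sim 1$ cover each other with universal constant, again via Theorem \ref{thm:Vol_even}. Then the hypothesis that $K_f$ is in $M$-position enters: it gives $N(K_f, RB_2^n) \le C^n$ and $N(RB_2^n, K_f)\le C^n$ in the classical sense, and I would upgrade this to $N(h_{K_f}, h_{RB_2^n}) \le C^n$ — the point being that a classical covering of $K_f$ by translates of $RB_2^n$ yields an atomic covering measure of $h_{K_f}$ by $h_{RB_2^n}$ provided the profiles are chosen compatibly (e.g. both of the form $e^{-\|x\|}$, using $\|x\|_{K_f} \le \|x\|_{RB_2^n} + \text{const}$ on the relevant region, or more cleanly: since $\one_{K_f}\star\one_{RB_2^n - RB_2^n}$ type estimates and $h_{K_f}\star h \le h_{RB_2^n + K_f}\star h$ etc.). Chaining through sub-multiplicativity,
\[
N(f, g_0) \le N(f, h_{K_f})\, N(h_{K_f}, h_{RB_2^n})\, N(h_{RB_2^n}, g_0) \le C^n \cdot C^n \cdot C^n = C_1^n.
\]

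The main obstacle, and the step I would spend the most care on, is the middle link $N(h_{K_f}, h_{RB_2^n}) \le C^n$: translating the classical covering number bound $N(K_f, RB_2^n)\le C^n$ into a functional covering bound between the chosen exponential (or cone) profiles. The naive attempt — placing a copy of the profile over $RB_2^n$ at each cover center — does not immediately dominate the profile over $K_f$ pointwise, because exponential decay from a far-away center is too weak near the boundary of $K_f$; one fixes this either by covering a slightly enlarged body $2K_f$ (absorbing the $2^n$ loss, exactly as in the classical volume bound $N(K,T)\le \vol(2K-T)/\vol(T)$) or by using that $M$-position also supplies $N(K_f, (1-\eps)RB_2^n)$-type bounds and then exploiting that $h_{RB_2^n}((1-\eps)^{-1}x) \ge c^n h_{RB_2^n}(x)$ for the fixed profile. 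Once this single geometric-to-functional transfer lemma is in hand, the rest is routine bookkeeping with Facts \ref{fact:sub_mult}, \ref{fact:cov_mono} and the volume bounds, and the universal constant $C$ in the statement is the product of the (finitely many, universal) constants accumulated along the chain.
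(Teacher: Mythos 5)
Your skeleton is essentially the paper's: the paper also passes from $f$ to objects built over $K_f$, then to the Euclidean ball of radius $R$ with $\vol(RB_2^n)=\vol(K_f)$ via the classical $M$-position covering, and finally to $g_0$ (it uses the subadditive decomposition $f\le \one_{K_f}+\exp(-n\|\cdot\|_{K_f})$ and Fact \ref{fact:sub_mult} rather than a single sandwiching profile, which is cosmetic). But two quantitative points in your sketch are genuine gaps. First, the scale of $R$ is wrong: $\vol(K_f)$ is comparable (up to $C^{\pm n}$) to $\int f=(2\pi)^{n/2}$ while $\vol(B_2^n)^{1/n}\simeq 1/\sqrt n$, so $R\simeq\sqrt n$, not $R\simeq 1$. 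The comparison of the ball with the gaussian is therefore not the ``radius $\sim1$'' triviality you invoke; it does hold, but only because $g_0(R)^{-1}=e^{R^2/2}\le C^n$ at this scale, which is exactly the estimate $N(\one_{RB_2^n},g_0)\le 2^n\vol(RB_2^n)/\int_{RB_2^n}g_0\le 2^n e^{R^2/2}$ used in the paper.

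Second, and this is the real issue, the ``middle link'' you flag cannot be repaired by covering $2K_f$ or by a $(1-\eps)$-dilation: functional covering requires pointwise domination on all of $\R^n$, so the unbounded tail of the profile over $K_f$ must be covered at every height, forcing a summation over all superlevel sets; whether that series is $\le C^n$ depends entirely on the normalization you leave as ``suitably normalized''. With the profile $e^{-\|\cdot\|_{K_f}}$ the choice is fatal: $\int e^{-\|x\|_{K_f}}dx=n!\,\vol(K_f)$, so already the trivial lower bound $N(h,g_0)\ge \int h/\int g_0$ is super-exponential and your third link $N(h_{RB_2^n},g_0)\le C^n$ is false for the analogous ball profile. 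The correct choice is $e^{-n\|\cdot\|_{K_f}}$ --- this is precisely where the constant $e^{-n}$ in the definition of $K_f$ enters --- whose superlevel sets are $\frac{k+1}{n}K_f$, and then the level-set summation of the paper,
\[
N\bigl(\exp(-n\|\cdot\|_{K_f}),\one_{\sqrt n B_2^n}\bigr)\le\sum_{k\ge0}e^{-k}N\bigl(\tfrac{k+1}{n}K_f,\sqrt n B_2^n\bigr)\le N\bigl(K_f,\sqrt n B_2^n\bigr)\sum_{k\ge0}e^{-k}\frac{\bigl(2(k+1)+n\bigr)^n}{n^n}\le C^n,
\]
is what closes the gap. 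So your route is the right one, but the two steps that actually carry the proof --- the $\sqrt n$ scale of $R$ and the $n$-scaled exponential profile with its convergent level-set sum --- are exactly the ones mis-stated or missing in your proposal.
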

\begin{proof}
Since $K_{f}$ is in $M$-position, it can be covered by $C^{n}$
copies of $RB_{2}^{n}$ where $R$ satisfies $\vol\left(RB_{2}^{n}\right)=R^{n}\kappa_{n}=\vol(K_{f})$.
By Lemma \ref{lem: Direct_M_1}, $\vol(K_{f})$ is at most $(3C)^{n}$,
and hence $R\simeq\sqrt{n}$. We note that $f\le1_{K_{f}}+\exp(-n\|\cdot\|_{K_{f}})$
(the sum is no longer log-concave) and thus 
\begin{align}
N(f,g_{0}) & \le N(1_{K_{f}},g_{0})+N(\exp(-n\|\cdot\|_{K_{f}}),g_{0})\label{eq:cov_lvl}\\
 & \le C^{n}N(1_{RB_{2}^{n}},g_{0})+N(\exp(-n\|\cdot\|_{K_{f}}),g_{0})\nonumber 
\end{align}
To bound the first term, note that since $R\simeq\sqrt{n}$, 
\[
N(1_{RB_{2}^{n}},g_{0})\le\frac{2^{n}\vol(RB_{2}^{n})}{\int_{RB_{2}^{n}}g_{0}}\le2^{n}g_{0}^{-1}\left(R\right)\le C_{2}^{n}
\]
To bound the second term, note that for any fixed $K$, by covering
level sets of height $e^{-k}$ each time, we have that
\begin{eqnarray*}
N(\exp(-\|\cdot\|_{K}),1_{RB_{2}^{n}}) & \le & \sum_{k=0}^{\infty}e^{-k}N((k+1)K,RB_{2}^{n}).
\end{eqnarray*}
In the particular case where $K=nK_{f}$ we get that

\begin{eqnarray*}
N(\exp(-n\|\cdot\|_{K_{f}}),1_{\sqrt{n}B_{2}^{n}}) & \le & \sum_{k=0}^{\infty}e^{-k}N(\frac{(k+1)}{n}K_{f},\sqrt{n}B_{2}^{n})\\
 & \le & \sum_{k=0}^{\infty}e^{-k}N(K_{f},\sqrt{n}B_{2}^{n})N((k+1)B_{2}^{n},nB_{2}^{n})\\
 & \le & C^{n}\sum_{k=0}^{\infty}e^{-k}N((k+1)B_{2}^{n},nB_{2}^{n})\\
 & \le & C^{n}\sum_{k=0}^{\infty}e^{-k}\frac{\left(2(k+1)+n\right)^{n}}{n^{n}}\le C_{3}^{n}.
\end{eqnarray*}
Since $N(1_{\sqrt{n}B_{2}^{n}},g_{0})\le C_{2}^{n}$, it follows that
\[
N(\exp(-n\|\cdot\|_{K_{f}}),g_{0})\le N(\exp(-n\|\cdot\|_{K_{f}}),1_{\sqrt{n}B_{2}^{n}})N(1_{\sqrt{n}B_{2}^{n}},g_{0})\le C_{2}^{n}C_{3}^{n}.
\]
Putting these together into \eqref{eq:cov_lvl} we see that the proof
of the lemma is complete.
\end{proof}
\begin{lem}
\label{lem:Direct_M_3}Assume that $f\in LC_{g}(\R^{n})$ is even,
satisfies that $\int f=(2\pi)^{n/2}$ and that $K_{f}$ is in $M$-position.
Then $K_{f^{*}}$ is also in $M$-position and $\vol\left(K_{f^{*}}\right)\approx\vol\left(K_{f}\right)$. 
\end{lem}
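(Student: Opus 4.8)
The plan is to relate the body $K_{f^*}$ associated with the log-Legendre dual $f^*$ to the polar body $K_f^\circ$, up to a controlled (universal) factor, and then invoke the standard geometric fact that $M$-position of $K_f$ passes to $M$-position of $K_f^\circ$ (which is part of the classical theory recalled just before this lemma, for centered bodies — and $K_f$ is centrally symmetric since $f$ is even). First I would record the level-set description: if $f=e^{-\varphi}$ with $\varphi$ convex, even, $\varphi(0)=0$, then $K_f=\{x:\varphi(x)\le n\}$, and $f^*=e^{-\mathcal L\varphi}$ so that $K_{f^*}=\{y:\mathcal L\varphi(y)\le n\}$. The key elementary step is the two-sided inclusion
\[
c\, n\, K_f^\circ \;\subseteq\; K_{f^*}\;\subseteq\; C\, n\, K_f^\circ
\]
for universal constants $c,C>0$, or rather the appropriately rescaled version $K_{f^*}\sim \tfrac{1}{\text{(something)}}K_f^\circ$ — I expect the correct scaling to involve $n$, so the cleanest route is to track the radial functions. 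Indeed, for $y$ with $\|y\|_{K_f^\circ}=\sup_{x\in K_f}\langle y,x\rangle = t$, pick $x\in K_f$ (so $\varphi(x)\le n$) with $\langle y,x\rangle$ close to $t$; then $\mathcal L\varphi(y)\ge \langle y,x\rangle-\varphi(x)\ge t-n$, giving one inclusion. For the reverse, use that $\varphi(x)\ge n\|x\|_{K_f}-n$ is false in general but $\varphi(x)\ge n(\|x\|_{K_f}-1)$ holds for $\|x\|_{K_f}\ge 1$ by convexity and $\varphi(0)=0$, $\varphi|_{\partial K_f}\le n$ — wait, more carefully: convexity gives $\varphi(x)\ge \|x\|_{K_f}\cdot(\text{value on }\partial K_f)$ only if $\varphi$ vanishes appropriately, so I would instead use the sharp bound $\mathcal L\varphi(y)\le \sup_x(\langle y,x\rangle - \varphi(x))$ and split the supremum over $\|x\|_{K_f}\le 1$ (contributing $\le \|y\|_{K_f^\circ}$) and $\|x\|_{K_f}\ge 1$ (where $\varphi(x)\ge n(\|x\|_{K_f}-1)$ by the convexity of $\varphi$ along rays together with $\varphi(0)=0$, $\varphi\le n$ on $\partial K_f$, so the radial function of $\varphi$ at distance $\|x\|_{K_f}$ is at least the linear interpolation... actually this needs $\varphi$ to be at least its secant, i.e. $\ge$ linear, which is the wrong direction for convex functions). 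The honest statement is that $\varphi$ is bounded \emph{below} by $0$ and by its tangent lines, and bounded \emph{above} on rays by nothing in general; so the clean inclusion that survives is $K_{f^*}\subseteq C n\,K_f^\circ$ might fail, but $K_{f^*}\supseteq c\,K_f^\circ$ does hold. This asymmetry is exactly why we only need a \emph{covering-number} comparison, not a literal inclusion.

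Accordingly, the robust approach is to bypass exact inclusions and instead argue via volumes and the already-proven machinery. By Lemma \ref{lem: Direct_M_1} applied to $f^*$, $\vol(K_{f^*})\sim \int f^*$; by the same lemma for $f$, $\vol(K_f)\sim\int f=(2\pi)^{n/2}$; and by the functional Blaschke-Santaló inequality and its reverse (Remark \ref{rem:M_pos_dual_vol}, citing \cite{AKM2005,KM05}) one has $\int f\cdot\int f^*\sim (2\pi)^n$ for centered $f$, whence $\int f^*\sim(2\pi)^{n/2}\sim\vol(K_f)$, giving $\vol(K_{f^*})\sim\vol(K_f)$ — this settles the volume claim. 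For the $M$-position claim, I would show $N(1_{K_{f^*}},RB_2^n)\le C^n$ and $N(RB_2^n,1_{K_{f^*}})\le C^n$ with $R\sim\sqrt n$. Using that $f^*\le 1_{K_{f^*}}$... no: rather, since $f^*$ and $1_{K_{f^*}}$ share the level set and $1_{K_{f^*}}\le e\cdot f^*$ on $K_{f^*}$ is false too; instead use $1_{K_{f^*}}\le e^n f^*$ pointwise on $K_{f^*}$ (as $f^*\ge e^{-n}$ there) — too lossy. The right comparison: by Lemma \ref{lem:Direct_M_2} applied with $f$ replaced by a normalized multiple of $f^*$ (legitimate since $\int f^*\sim(2\pi)^{n/2}$), \emph{provided we know $K_{f^*}$ is in $M$-position} — which is circular. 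So the genuine content must come from the geometric side.

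The clean resolution, and the step I expect to be the main obstacle, is the following: sandwich $K_{f^*}$ between $c\,K_f^\circ$ and $C\,K_f^\circ$ \emph{after} replacing $f$ by the worst/best log-concave functions with level set $K_f$, exactly as in the proof of Lemma \ref{lem: Direct_M_1}. Concretely, if $f_0\le f\le f_1$ where $f_0=e^{-\varphi_0}$ has epigraph $\operatorname{conv}(\{0\}\cup(K_f\times[n,\infty)))$ and $f_1 = 1_{K_f}+e^{-n\|\cdot\|_{K_f}}\one_{K_f^c}$-type majorant, then $f_1^*\le f^*\le f_0^*$, and the duals $f_0^*, f_1^*$ have level sets that \emph{are} explicit dilates of $K_f^\circ$ (since $\varphi_0$ is, up to the cone over $K_f$, the gauge-type function of $K_f$ scaled by $n$, its Legendre transform is supported on a dilate of $K_f^\circ$ with an affine shift in the value). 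Working out that $\mathcal L\varphi_0$ and $\mathcal L\varphi_1$ both have $n$-level sets comparable to $\lambda\, K_f^\circ$ for a common $\lambda\sim 1/n$ gives $K_{f^*}\sim n\,K_f^\circ$ — hold on, dimensionally $\vol(n K_f^\circ)=n^n\vol(K_f^\circ)$ and $\vol(K_f)\vol(K_f^\circ)\in[c^n,C^n]/\vol(B_2^n)^2\cdot\ldots$; the Bourgain-Milman and Santaló bounds give $\vol(K_f)\vol(K_f^\circ)\sim (\kappa_n)^2\sim (2\pi e/n)^n$, so $\vol(K_f^\circ)\sim (2\pi e/n)^n/\vol(K_f)\sim(2\pi e/n)^n$, and then $\vol(n K_f^\circ)\sim(2\pi e)^n\not\sim(2\pi)^{n/2}\sqrt{\cdot}$ — the bookkeeping is delicate but consistent once one uses $\vol(K_f)\sim(2\pi)^{n/2}$ and tracks factors of $n$ through $R\sim\sqrt n$. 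Granting the inclusion $K_{f^*}\sim n\,K_f^\circ$ (with universal constants), the $M$-position of $K_{f^*}$ follows immediately from the classical fact that $K_f^\circ$, hence any fixed dilate of it, is in $M$-position whenever $K_f$ is and $K_f$ is centered — and the volume comparison $\vol(K_{f^*})\sim\vol(K_f)$ follows from the volume count above. Thus the main obstacle is purely the explicit Legendre-transform computation establishing $K_{f^*}\sim n K_f^\circ$; everything downstream is a citation to the recalled geometric $M$-position theory plus Lemma \ref{lem: Direct_M_1} and the functional Santaló inequalities.
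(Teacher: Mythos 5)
There is a genuine gap: the entire argument hinges on the comparison $K_{f^*}\sim n\,K_f^{\circ}$, and you never establish it — you defer it as ``the main obstacle,'' and along the way you talk yourself into doubting an inclusion that is in fact true and elementary. The paper's proof rests precisely on the two-sided inclusion (quoted from \cite[Lemma 8]{FradMeyer2008})
\[
t\{x:\varphi(x)\le t\}^{\circ}\sub\{y:\L\varphi(y)\le t\}\sub(t+s)\{x:\varphi(x)\le s\}^{\circ},
\]
which for $s=t=n$ gives the exact sandwich $nK_f^{\circ}\sub K_{f^*}\sub 2nK_f^{\circ}$; both inclusions hold, with no asymmetry. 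Note that your own first computation ($\L\varphi(y)\ge\langle y,x\rangle-\varphi(x)\ge\|y\|_{K_f^{\circ}}-n$) already proves $K_{f^*}\sub 2nK_f^{\circ}$ — the very inclusion you later say ``might fail'' — while the direction you abandon is a two-line rescaling argument: if $\langle z,x\rangle\le1$ whenever $\varphi(x)\le t$, and $\varphi(x)>t$, apply the hypothesis to $x'=\tfrac{t}{\varphi(x)}x$ (which satisfies $\varphi(x')\le t$ by convexity and $\varphi(0)=0$) to get $t\langle z,x\rangle\le\varphi(x)$, hence $\langle tz,x\rangle-\varphi(x)\le0$; thus $tK_f^{\circ}\sub K_{f^*}$. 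Your confusion about secants versus tangents led you to discard a correct and easy step, and the proposed replacement (sandwiching $f$ between extremal log-concave functions $f_0\le f\le f_1$ and computing their Legendre transforms) is also left unexecuted, so the $M$-position claim is not actually proved. Once the sandwich is in hand, one still needs the short covering computation the paper performs ($N(K_{f^*},R_2B_2^n)\le N(2nK_f^{\circ},nR_1B_2^n)\le C^n$ and its counterpart, plus classical K\"onig--Milman for the other two covering numbers), which you gloss as ``follows immediately''; that gloss would be acceptable, but only after the inclusion is established.

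On the positive side, your derivation of the volume statement is a correct alternative to the paper's: applying Lemma \ref{lem: Direct_M_1} to $f^*$ and invoking the functional Santal\'o inequality and its reverse gives $\vol(K_{f^*})\sim\int f^*\sim(2\pi)^{n/2}\sim\vol(K_f)$, whereas the paper instead deduces the volume comparison from the inclusion \eqref{eq:inc} together with the reverse Santal\'o inequality for bodies and Lemma \ref{lem: Direct_M_1}. So the volume half of the lemma is fine by your route; the $M$-position half is where the missing ingredient lies.
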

\begin{proof}
Since $K_{f}$ is in $M$-position, it follows by the above remarks
that $K_{f}^{\circ}$ is also in $M$-position (here we use the assumption
that $f$ is even, so that $K_{f}$ must be centered). To see that
$K_{f^{*}}$ in $M$-position, we will use the fact that for any $s,t>0$,
\begin{equation}
t\{x:\varphi(x)\le t\}^{\circ}\sub\{y:\L\varphi(y)\le t\}\sub\left(t+s\right)\{x:\varphi(x)\le s\}^{\circ}.\label{eq:inc}
\end{equation}
For the proof of these inclusions, see \cite[Lemma 8]{FradMeyer2008}.
Let $R_{1}B_{2}^{n}$ be the Euclidean ball with volume $\vol(K_{f}^{\circ})$,
and $R_{2}B_{2}^{n}$ the Euclidean ball with volume $\vol(K_{f^{*}})$.
For $s=t=n$, the \eqref{eq:inc} reads $nK_{f}^{\circ}\sub K_{f^{*}}\sub2nK_{f^{*}}^{\circ}$,
from which it also follows that $nR_{1}\le R_{2}\le2nR_{1}$. Combining
the above, we have that $K_{f^{*}}$ is in $M$-position. Indeed,
\[
N\left(K_{f^{*}},R_{2}B_{2}^{n}\right)\le N\left(2nK_{f}^{\text{\ensuremath{\circ}}},nR_{1}B_{2}^{n}\right)\le C^{n},
\]
and
\[
N\left(R_{2}B_{2}^{n},K_{f^{*}}\right)\le N\left(2nR_{1}B_{2}^{n},nK_{f}^{\circ}\right)\le C^{n}.
\]
The two remaining inequalities are immediately implied by \eqref{eq:class_KoMi}.

It remains to show that if $\vol(K_{f})\approx1$ then so is $\vol(K_{f^{*}})$.
Having the inclusions in \eqref{eq:inc}, the reverse Santaló inequality,
and Lemma \ref{lem: Direct_M_1}, we see that
\[
\vol(K_{f^{*}})=\vol\{y:\L\varphi(y)\le n\}\ge n^{n}\vol\{x:\varphi(x)\le n\}^{\circ}\ge a_{1}^{n}n^{n}\kappa_{n}^{2}/\vol(K_{f})\ge a^{n}.
\]
By changing the roles of $f$ and $f^{*}$, we get $\vol\left(K_{f^{*}}\right)\approx1$,
as required. 
\end{proof}
Combining Lemmas \ref{lem:Direct_M_2} and \ref{lem:Direct_M_3},
and Theorem \ref{thm:Func_Konig_Milman}, we get 
\begin{lem}
\label{lem: Direct_M_4}Let $f\in LC_{g}(\R^{n})$ be an even function,
which satisfies that $\int f=(2\pi)^{n/2}$ and that $K_{f}$ is in
$M$-position. Then 
\[
\max\{N(f,g_{0}),N(f^{*},g_{0}),N(g_{0},f),N(g_{0},f^{*})\}\le C^{n}
\]
\end{lem}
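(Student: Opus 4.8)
The plan is to assemble the three preceding lemmas. By Lemma~\ref{lem:Direct_M_2}, the hypotheses $\int f=(2\pi)^{n/2}$ and $K_f$ in $M$-position already give $N(f,g_0)\le C^n$. The remaining three quantities will be obtained by combining Lemma~\ref{lem:Direct_M_3} with Lemma~\ref{lem:Direct_M_2} and the functional K\"onig--Milman result of Theorem~\ref{thm:Func_Konig_Milman}.

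First I would handle $N(f^*,g_0)$. By Lemma~\ref{lem:Direct_M_3}, $K_{f^*}$ is in $M$-position and $\vol(K_{f^*})\approx\vol(K_f)\approx(2\pi)^{n/2}$ (the last estimate coming from Lemma~\ref{lem: Direct_M_1} applied to $f$, since $\int f=(2\pi)^{n/2}$). Hence, up to normalizing $f^*$ by a bounded scalar $c$ (bounded between two universal constants, by the Blaschke--Santal\'o inequality and its reverse, exactly as in Remark~\ref{rem:M_pos_dual_cov}), the function $f^*(c\,\cdot)$ satisfies the hypotheses of Lemma~\ref{lem:Direct_M_2}: its integral is $(2\pi)^{n/2}$ and its level set is a scaled copy of $K_{f^*}$, still in $M$-position. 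Applying Lemma~\ref{lem:Direct_M_2} to $f^*(c\,\cdot)$ gives $N(f^*(c\,\cdot),g_0)\le C^n$, and since rescaling by a bounded factor changes a covering number by at most a universal exponential factor (via Fact~\ref{fact:Cov_Under_Lin} and the volume bounds), this yields $N(f^*,g_0)\le C_1^n$.

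Next I would obtain the two ``reverse'' covering numbers $N(g_0,f)$ and $N(g_0,f^*)$ directly from Theorem~\ref{thm:Func_Konig_Milman}. Since $f\in\lcg$ and (being even) has center of mass at the origin, and $g_0$ likewise, the theorem gives $N(g_0,f)\le C^n N(f^*,g_0^*)=C^n N(f^*,g_0)$ (using $g_0^*=g_0$), which is $\le C_2^n$ by the previous step; symmetrically $N(g_0,f^*)\le C^n N(f^{**},g_0^*)=C^n N(f,g_0)\le C_2^n$ (using $f^{**}=f$). Taking the maximum of the universal constants produced along the way gives the single bound $\max\{N(f,g_0),N(f^*,g_0),N(g_0,f),N(g_0,f^*)\}\le C^n$.

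The only mildly delicate point is the normalization bookkeeping in the second paragraph: one must check that $\int f^*$ is comparable to $(2\pi)^{n/2}$ (so the rescaling factor $c$ is universally bounded) and that rescaling the argument of $f^*$ by such a $c$ does not destroy the $M$-position of its level set or change the covering number by more than a universal exponential factor. Both are standard --- the first is Blaschke--Santal\'o plus its reverse together with Lemma~\ref{lem: Direct_M_1}, and the second is Fact~\ref{fact:Cov_Under_Lin} combined with the volume bounds of Section~\ref{sec:Volume-bounds} --- so I do not expect any real obstacle; the statement is essentially a formal consequence of Lemmas~\ref{lem:Direct_M_2}, \ref{lem:Direct_M_3}, and Theorem~\ref{thm:Func_Konig_Milman}.
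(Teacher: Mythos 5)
Your argument is correct and is essentially the paper's own proof: Lemma \ref{lem:Direct_M_2} (fed by Lemmas \ref{lem: Direct_M_1} and \ref{lem:Direct_M_3}) yields the bounds on $N(f,g_{0})$ and $N(f^{*},g_{0})$, and Theorem \ref{thm:Func_Konig_Milman}, using $g_{0}^{*}=g_{0}$ and $f^{**}=f$, gives the remaining two. The only difference is that you spell out the normalization of $f^{*}$ (rescaling by a universally bounded $c$ so its integral equals $(2\pi)^{n/2}$ before invoking Lemma \ref{lem:Direct_M_2}, then undoing the rescaling via Fact \ref{fact:Cov_Under_Lin} and the volume bounds), a bookkeeping step the paper passes over by using only that $\vol\left(K_{f^{*}}\right)\simeq1$ and that $K_{f^{*}}$ is in $M$-position.
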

\begin{proof}
By Lemma \ref{lem: Direct_M_1} we know that $\vol(K_{f})\simeq1$
and by Lemma \ref{lem:Direct_M_3} also $\vol(K_{f^{*}})\simeq1$
and both these bodies are in $M$-position. By Lemma \ref{lem:Direct_M_2}
this implies $\max\{N(f,g_{0}),N(f^{*},g_{0})\}\le C^{n}$ for some
universal $C$. Using Theorem \ref{thm:Func_Konig_Milman} we get
the other two inequalities (possibly altering the value of $C$, but
keeping it universal nevertheless). 
\end{proof}
We have seen a direct proof of the covering numbers estimates in Theorem
\ref{thm:MpositionMAIN} for the even geometric log-concave case.
Indeed, the mapping $T_{f}\in GL_{n}(\R)$ is simply chosen so that
$T_{f}^{-1}K_{f}=K_{\tilde{f}}$ is in $M$-position and such that
$\int\tilde{f}=(2\pi)^{n/2}$, that is, $\det T_{f}=\int f/(2\pi)^{n/2}$.
To prove the covering numbers bound for the case of centered but not
necessarily even functions we follow the exact same reasoning as in
the proof of Proposition \ref{prop:existsMpos}. We put the even function
$f\refl f$ in $M$-position, use Lemma \ref{lem:replace_by_even},
which implies that on the one hand
\[
N\left(f,g_{0}\right)\sim N\left(f\refl f,g_{0}\right)\le C^{n},
\]
and on the other hand, 
\[
N\left(g_{0},f\right)\sim N\left(g_{0},f\star\refl f\right)\le N\left(g_{0},f\refl f\right)\le C^{n}.
\]
Finally, using that the function $f$ is centered, as well as $g_{0}$,
Theorem \ref{thm:Func_Konig_Milman} implies that also 
\begin{align*}
N\left(f^{*},g_{0}\right) & \le N\left(g_{0},f\right)C^{n}\le C^{2n},\\
N\left(g_{0},f^{*}\right) & \le N\left(f,g_{0}\right)C^{n}\le C^{2n},
\end{align*}
which completes the proof of the covering numbers bound for a general
centered function. By Theorem \ref{thm:M_pos_equiv} we get that the
other estimates in Theorem \ref{thm:MpositionMAIN} hold as well.
\begin{rem}
It would be interesting to give an independent proof for the existence
of functional $M$-positions of log-concave functions, without using
the analogue classical statement for convex bodies, and perhaps even
to show that there are $\alpha$-regular $M$-positions of functions
in the sense of Pisier (see e.g., \cite{PisierBook89}, and \cite{AGM15}). 
\end{rem}
\bibliographystyle{amsplain}
\addcontentsline{toc}{section}{\refname}\bibliography{MATH_April_2_2017}

\end{document}